\newcommand{\NN}{\mathbb{N}}
\newcommand{\RR}{\mathbb{R}}
\newcommand{\bA}{{\boldsymbol{A}}}
\newcommand{\bcB}{{\boldsymbol{\cB}}}
\newcommand{\bC}{{\boldsymbol{C}}}
\newcommand{\bD}{{\boldsymbol{D}}}
\newcommand{\be}{{\boldsymbol{e}}}
\newcommand{\bE}{{\boldsymbol{E}}}
\newcommand{\bI}{{\boldsymbol{I}}}
\newcommand{\bQ}{{\boldsymbol{Q}}}
\newcommand{\tbQ}{\widetilde{\bQ}}
\newcommand{\tX}{\widetilde{X}}
\newcommand{\bu}{{\boldsymbol{u}}}
\newcommand{\bv}{{\boldsymbol{v}}}
\newcommand{\bX}{{\boldsymbol{X}}}
\newcommand{\by}{{\boldsymbol{y}}}
\newcommand{\bY}{{\boldsymbol{Y}}}
\newcommand{\bZ}{{\boldsymbol{Z}}}
\newcommand{\bU}{{\boldsymbol{U}}}
\newcommand{\tbU}{{\widetilde{\bU}}}
\newcommand{\balpha}{{\boldsymbol{\alpha}}}
\newcommand{\bvare}{{\boldsymbol{\vare}}}
\newcommand{\bvarphi}{{\boldsymbol{\varphi}}}
\newcommand{\boldeta}{{\boldsymbol{\eta}}}
\newcommand{\btheta}{{\boldsymbol{\theta}}}
\newcommand{\bxi}{{\boldsymbol{\xi}}}
\newcommand{\bzero}{{\boldsymbol{0}}}
\newcommand{\cB}{{\mathcal B}}
\newcommand{\cD}{{\mathcal D}}
\newcommand{\cF}{{\mathcal F}}
\newcommand{\cM}{{\mathcal M}}
\newcommand{\bcM}{\boldsymbol{\cM}}
\newcommand{\cS}{{\mathcal S}}
\newcommand{\cZ}{{\mathcal Z}}
\newcommand{\cW}{{\mathcal W}}
\newcommand{\bcW}{\boldsymbol{\cW}}
\newcommand{\bcZ}{\boldsymbol{\cZ}}
\newcommand{\ee}{\mathrm{e}}
\newcommand{\oo}{\mathrm{o}}
\newcommand{\OO}{\mathrm{O}}
\newcommand{\EE}{\operatorname{E}}
\newcommand{\PP}{\operatorname{P}}
\newcommand{\var}{\operatorname{Var}}
\newcommand{\cov}{\operatorname{Cov}}
\newcommand{\hcM}{\widehat{\cM}}
\newcommand{\hbcM}{\widehat{\bcM}}
\newcommand{\hM}{\widehat{M}}
\newcommand{\hbI}{\widehat{\bI}}
\newcommand{\hbZ}{\widehat{\bZ}}
\newcommand{\halpha}{\widehat{\alpha}}
\newcommand{\hbalpha}{\widehat{\balpha}}
\newcommand{\hbtheta}{\widehat{\btheta}}
\newcommand{\htau}{\widehat{\tau}}
\newcommand{\tbalpha}{\widetilde{\balpha}}
\newcommand{\tbtheta}{\widetilde{\btheta}}
\newcommand{\tmu}{\widetilde{\mu}}
\newcommand{\hmu}{\widehat{\mu}}
\newcommand{\tbX}{\widetilde{\bX}}
\newcommand{\tU}{\widetilde{U}}
\newcommand{\vare}{\varepsilon}
\newcommand{\mxk}{
\begin{bmatrix}
\bX_{k-1}
\\
1
\end{bmatrix}
}
\newcommand{\mx}[1]{
\begin{bmatrix}
#1
\\
1
\end{bmatrix}
}
\newcommand{\nrho}{{\lfloor n \rho \rfloor}}
\newcommand{\vnorm}[1]{\left\lVert #1 \right\rVert}
\renewcommand{\mid}{\,|\,}
\renewcommand{\leq}{\leqslant}
\renewcommand{\geq}{\geqslant}
\newcommand{\stoch}{\stackrel{\PP}{\longrightarrow}}
\newcommand{\distr}{\stackrel{\cD}{\longrightarrow}}
\newcommand{\distre}{\stackrel{\cD}{=}}
\newcommand{\as}{\stackrel{{\mathrm{a.s.}}}{\longrightarrow}}
\newcommand{\nt}{{\lfloor nt\rfloor}}
\begin{document}
\numberwithin{equation}{section}
\theoremstyle{plain}
\newtheorem{Thm}{Theorem}[section]
\newtheorem{Lem}[Thm]{Lemma}
\newtheorem{Pro}[Thm]{Proposition}
\newtheorem{Cor}[Thm]{Corollary}

\theoremstyle{definition}
\newtheorem*{defi}{Definition}
\newtheorem*{remark}{Remark}

\newcommand{\Q}{\textbf{???}}

\begin{center}
{\Large\bf Change detection in INAR($p$) processes \\
 against various alternative hypotheses}
\\[18pt]
{\large\sc Gyula Pap and Tam\'as T. Szab\'o}

{\large Bolyai Institute, University of Szeged, Szeged, Hungary}

Address of correspondence: Tam\'as T. Szab\'o, Bolyai Int\'ezet, Aradi v\'ertan\'uk tere 1, Szeged, H-6720, Hungary; E-mail: tszabo@math.u-szeged.hu

\end{center}

\begin{abstract}
\emph{Change in the coefficients or in the mean of the innovation of an INAR($p$) process is a sign of disturbance that is important to detect. 
The proposed methods can test for change in any one of these quantities
 separately, or in any collection of them.
They make both one-sided and two-sided tests possible, furthermore,
 they can be used to test against the `epidemic' alternative. 
The tests are based on a CUSUM process using CLS
 estimators of the parameters.
Under the one-sided and two-sided alternatives consistency of the tests is proved and the properties of the change-point estimator are also explored.}
 \\
Keywords: Change-point detection; stable INAR($p$) process \\
2010 AMS subject classifications: primary 62M02; secondary 60J80, 60F17
\end{abstract}

\section{Introduction}\label{intro}

A time-inhomogeneous INAR($p$) process is a sequence \ $(X_k)_{k \geq -p+1}$ \ given by
 \begin{align}\label{GWI}
  X_k = \sum_{j=1}^{X_{k-1}} \xi_{1, k, j} + \cdots
        + \sum_{j=1}^{X_{k-p}} \xi_{p, k, j} + \vare_k, \qquad k \in \NN ,
 \end{align}
 where \ $\{ \vare_k : k \in \NN \}$ \ is a sequence of independent non-negative
 integer-valued random variables, for each \ $k \in \NN$ \ and
 \ $i \in \{ 1, \dots, p\}$ \ the sequence \ $\{ \xi_{i, k, j} : j \in \NN \}$
 \ is a sequence of i.i.d.~Bernoulli random variables with mean
 \ $\alpha_{i,k}$ \ such that these sequences are mutually independent and
 independent of the sequence \ $\{ \vare_k : k \in \NN \}$, \ and
 \ $X_0$, \dots, $X_{-p+1}$ \ are non-negative integer-valued random
 variables independent of the sequences \ $\{ \xi_{i, k, j} : j \in \NN \}$,
 \ $k \in \NN$, \ $i \in \{ 1, \dots, p\}$, \ and
 \ $\{ \vare_k : k \in \NN \}$. The numbers \ $\alpha_{i,k}$ \ are called coefficients,
 and we will refer to \ $\vare_1, \ \vare_2, \ldots \ $ as the innovations.
\ Time-homogeneous INAR($p$) processes have a number of applications, which are summarized, e.g., in \shortciteA{BarczAsymp_10}.

The reason that we initially define our process as time-inhomogeneous is that we would like to test for a change in the parameters, therefore we have to allow them to vary over time. In the proofs, however, a majority of the results will be based upon the properties of time-homogeneous INAR($p$) processes.

In applications the time series underlying an observation is usually supposed to be homogeneous, and this can lead to false conclusions if the parameters have changed during the time of the observation. Therefore testing for a change in the parameters has been an important question.
The monograph of \citeA{CsHorv_97} gives an excellent overview of the subject for many times series models, and in their own context, their main results are stronger than those of this paper---e.g., in place of Theorem \ref{HA_mu_2} they usually prove convergence in distribution under a variety of conditions.
Their methods, however, cannot be directly used to handle INAR($p$) processes---they routinely involve a Taylor expansion of the likelihood function, which we cannot do because we are not assuming that the innovation distribution comes from a parametric family.
Furthermore, the autoregressive processes investigated in \citeA{CsHorv_97} have the usual type of autoregression---i.e., the next element of the time series is a measurable function of the previous elements, plus an independent innovation.
In our case, however, the next element depends stochastically on the previous elements as well. Therefore, it is more fruitful to consider our model as a special multitype branching process.
This will be our approach as well, but we remark that much of the motivation for the paper is due to \citeA{CsHorv_97} and \citeA{Gomb_08}.
We note that our Theorem \ref{theta} and Theorem \ref{HA_mu_1} are weaker than the equivalent versions of Theorems 1 and 2 in \citeA{Gomb_08}---this is due to our model being more complicated. In our continued research we plan to improve our results in this direction.
Similarly, in place of Theorem \ref{HA_mu_2} we hope to show that the difference $\tau_n - \tau$ converges in distribution, as it was shown in \citeA{CsHorv_97} for the simpler models. Finally, we remark that it is indeed proper to take motivation from these sources because our process resembles an AR($p$) process in its covariance structure.

Change detection methods for INAR($p$) processes in general (i.e., with no prespecified innovation distribution) have only been proposed in a few papers---we refer to \citeA{KLee_09} especially, where the authors give a test statistics similar to ours for a more general model. However, no result is available under the alternative hypothesis and the asymptotics of the change-point estimator are not given---we will give results to these questions which strengthen the theoretical foundations of the method considerably.

The test has already been published in \citeA{TSz_11}. However, the referred paper only states the main result but does not provide the proof. Also, it contains no results under the alternative hypothesis either, which is the main result of this paper. In the appendix we will also provide the missing (although rather standard) proof in \citeA{TSz_11}.

Now we proceed with the formulation of the statistical problem. We assume \ $\mu_k := \EE(\vare_k) < \infty$ \ and
 \ $0 < \sigma^2_k := \var(\vare_k) < \infty$.

Write the parameter vectors as
 \begin{align*}
  \begin{bmatrix}
   \alpha_{1,k} \\
   \vdots \\
   \alpha_{p,k} \\
   \mu_k
  \end{bmatrix}
     =: \btheta_k ,
 \end{align*}
 and let us choose a subset $PV$ of $\{1,2,\ldots,p+1\}$ such that
 \begin{equation}\label{eq:PV}
 PV = \{i_1,i_2,\ldots,i_\ell\} \text{ for some } \ell > 0, \quad i_1 < i_2 < \ldots < i_\ell.
 \end{equation}
 Also, we can write
 \[
 NV := PV^\complement = \{j_1,j_2,\ldots,j_{p+1-\ell}\}, \quad j_1 < j_2 < \ldots < j_{p+1-\ell},
 \] 
 where $H^\complement$ denotes the complement of a set.
 Let us now define
 \[
 \bvarphi_k:=(\theta_k^{(i_1)},\theta_k^{(i_2)}, \ldots, \theta_k^{(i_\ell)})^\top, \quad \boldeta_k:=(\theta_k^{(j_1)},\theta_k^{(j_2)}, \ldots, \theta_k^{(j_{p+1-\ell})})^\top.
 \]
 
 The vector \ $\bvarphi_k$ \ is the parameter vector of interest and \ $\boldeta_k$ \ is the 'nuisance' parameter vector.
For a fixed number of observations $n$ we want to test the null hypothesis
 \begin{align*}
  \mathrm{H_0}: \qquad 
  \text{$\vare_1, \dots, \vare_n$ \ are identically distributed \ and
        \ $\btheta_1 = \btheta_2 = \dots = \btheta_n$}
 \end{align*}
 against the alternative
 \begin{align*}
  \mathrm{H}_\mathrm{A}: \qquad 
  &\text{there is an integer \ $\tau \in \{1, \dots, n-1\}$ \ such that} \\
  &\text{$\bvarphi_1 = \dots = \bvarphi_\tau \ne \bvarphi_{\tau + 1} = \dots
         = \bvarphi_n$
         \ but \ $\boldeta_1 = \dots = \boldeta_n$,} \\
  &\text{$\vare_1, \dots, \vare_\tau$ \ are identically distributed,} \\
  &\text{and \ $\vare_{\tau + 1}, \dots, \vare_n$ \ are identically distributed.}
 \end{align*}
The main difficulty is that the change is not in a location or scaling
 parameter as in most of the change detection models, and standard techniques
 based on likelihood are not applicable.

Under the null hypothesis \ $\mathrm{H_0}$ \ we have
 \begin{align*}%\label{H_0}
  \begin{bmatrix}
   \alpha_{1,1} \\
   \vdots \\
   \alpha_{p,1} \\
   \mu_1
  \end{bmatrix}
  = \dots = \begin{bmatrix}
             \alpha_{1,n} \\
             \vdots \\
             \alpha_{p,n} \\
             \mu_n
             \end{bmatrix}
  =: \begin{bmatrix}
      \alpha_1 \\
      \vdots \\
      \alpha_p \\
      \mu
     \end{bmatrix}
  =: \begin{bmatrix}
      \balpha \\
      \mu
     \end{bmatrix}
  =: \btheta ,
\qquad
\sigma^2_1=\ldots=\sigma^2_n=:\sigma^2,
 \end{align*}
 and the process \eqref{GWI} is referred to as stable, unstable or explosive
 whenever \ $\alpha_1 +\cdots + \alpha_p < 1$,
 \ $\alpha_1 +\cdots + \alpha_p = 1$ \ or \ $\alpha_1 +\cdots + \alpha_p > 1$,
 \ respectively. Basic differences between the three types are summarized in \citeA{BarczAsymp_10}.
We will study only the stable case \ $\alpha_1 +\cdots + \alpha_p < 1$
 \ with nonvanishing innovation (i.e.~with \ $\mu > 0$), when the Markov chain
 \ $(\bX_k)_{k \geq 0}$ \ given by
 \[
   \bX_k
   := \begin{bmatrix}
       X_k \\
       \vdots \\
       X_{k-p+1}
      \end{bmatrix} , \qquad
   k \geq 0 ,
 \]
 has a nondegenerate, unique stationary distribution (see e.g.~\citeA{Qu_70}).
 Under the alternative hypothesis we will require that the process be stable both before and after the change.
 This assumption is central to our proofs, because if the process is not stable, then it is not ergodic either and we cannot use any of the methods outlined below.
 For unstable and explosive processes completely different tools should be developed.
 
 The structure of the paper is the following. In the second section we propose a CUSUM-like test process similar to that of \citeA{KLee_09}, in the third section we compute its limiting distribution and, based on the process, we introduce both one-sided and two-sided tests. For simulations on the power of these tests we refer the reader to \citeA{TSz_11}. Our main results under the alternative hypothesis are stated in Section 4 and a real data illustration is given in Section 5. Appendix A contains detailed proofs under the null hypothesis, while Appendix B contains the proofs under the alternative hypothesis. The paper is concluded by Appendix C, which contains some necessary technical lemmas and calculations. Unless otherwise noted, we understand convergence as \ $n \to \infty.$ We will denote the set of positive integers by $\NN$ and the set of nonnegative integers by $\NN_0$.

\section{Parameter estimation and the construction of our test process}\label{sec:parest}
\subsection{Parameter estimates}
The first step in parameter estimation is introducing the sequence of martingale differences $(M_k)_{k \in \NN}$:
\begin{equation}\label{Mdefi}
M_k = X_{k} - \EE(X_k|\cF_{k-1}) = X_k - \balpha^\top \bX_{k-1} - \mu, \qquad k \in \NN,
\end{equation}
where $(\cF_n)_{n \in \NN}$ is the natural filtration. The conditional least squares estimators of the parameters, first introduced by \citeA{KlNe_78}, can be calculated by minimizing the sum of squares
\[
R_n(\alpha_1, \ldots, \alpha_p,\mu)
 := \frac{1}{2}\sum_{k=1}^{n}M_k^2
 = \frac{1}{2}\sum_{k=1}^{n}(X_k - \balpha^\top \bX_{k-1} - \mu)^2
\]
with respect to $\alpha_1, \ldots, \alpha_p, \mu$. 
With a reasoning completely analogous to that of 3.1 Lemma and 3.1 Proposition in \citeA{BarczPrimitive_12} we can show that $R_n$ has a unique minimum given by \eqref{CLSests} whenever $\bQ_n$ is invertible.
Now, we will show in \ref{sec:invert} that this is true with an asymptotic probability of 1, therefore the parameter estimates exist and are unique with an asymptotic probability of 1.
As all our results our asymptotic, this will be sufficient for the purposes of the paper.
\begin{align}\label{CLSests}
  \hbtheta_n :=
  \begin{bmatrix}
   \hbalpha_n \\
   \hmu_n
  \end{bmatrix}
  := \bQ_{n}^{-1}
     \sum_{k=1}^n
      X_k
      \begin{bmatrix}
       \bX_{k-1} \\
       1
      \end{bmatrix} , \qquad
  \bQ_{n} := \sum_{k=1}^n
               \begin{bmatrix}
                \bX_{k-1} \\
                1
               \end{bmatrix}
               \begin{bmatrix}
                \bX_{k-1} \\
                1
               \end{bmatrix}^\top .
 \end{align}
We note that this estimator is strongly consistent under the null hypothesis (\citeNP{DuLi_91}), and the estimation procedure itself supposes that the null hypothesis is valid; however, the calculations can be carried out under the alternative hypothesis as well. Under the alternative hypothesis the weak limit of \ $\hbtheta_n$ \ is given in Lemma \ref{lem:asymperror}.
 Replacing the parameters by their estimates in $M_k$ we obtain $\hM_k^{(n)}$, i.e.,
 \begin{align}\label{hM_n}
  \hM_k^{(n)} := X_k - 
					 \hbtheta_n^\top
					 \begin{bmatrix}
                      \bX_{k-1} \\
                      1
                     \end{bmatrix}
                      .
 \end{align}
Although not a parameter in which we are looking for change, the estimate of the variance of the innovation $\sigma^2$ will also appear in our test process, therefore we have to provide an estimator for it. To do this, we introduce
\[
N_k = M_k^2 - E(M_k^2|\cF_{k-1}) = M_k^2 - \alpha_1(1-\alpha_1)X_{k-1} - \cdots - \alpha_p(1-\alpha_p) X_{k-p} - \sigma^2 , \qquad k \geq 0.
\]
Minimizing $\sum_{k=1}^n N_k^2$ with respect to $\sigma^2$ we
obtain the conditional least squares estimate
\begin{equation}\label{eq:barsigma}
\overline{\sigma}^2_n
=
-\frac{1}{n}
\sum_{k=1}^n
(M_k^2 - \alpha_1 (1-\alpha_1) X_{k-1} - \cdots - \alpha_p(1-\alpha_p) X_{k-p}).
\end{equation}
However, in this estimate the true parameters are still present. The estimate that we will use is given by replacing the $\alpha$ coefficients and $\mu$ both in the formula and in
 $M_k^2$ by their estimates:
\begin{equation}\label{eq:hatsigma}
\widehat{\sigma}^2_n
=
-\frac{1}{n}
\sum_{k=1}^n
\left(\left(\hM_k^{(n)}\right)^2 - \halpha_1^{(n)}\left(1-\halpha_1^{(n)}\right) X_{k-1} - \cdots - \halpha_p^{(n)}\left(1-\halpha_p^{(n)}\right) X_{k-p}\right).
\end{equation}

\subsection{Construction of the test process}

We use a formal analogy between the INAR($p$) process and the well-known AR($p$) process
 (\citeNP{Venk_82}) to obtain analogues of score vector and information
 quantities as in \citeA{Gomb_08}.
We briefly recall the motivation of the test process as given in \citeA{TSz_11}. We briefly recall the motivation of the test process as given in \citeA{TSz_11}. Due to the martingale central limit theorem,
\[
\left(\frac{1}{\sqrt{n}}\sum_{k=1}^{\lceil nt \rceil} M_k\right)_{t \in [0,1]} \distr \left(\sqrt{c}{\mathcal W}_t\right)_{t \in [0,1]},
\]
where $c$ is a constant depending on $\btheta$ and $\sigma^2$, and $({\mathcal W}_t)_{0 \leq t \leq 1}$ is a standard Brownian motion. Therefore, by a rough approximation
\[
\left(M_1, \ldots, M_n\right) \sim N(0, c E_n),
\]
where $E_n$ is the $n \times n$ identity matrix. 
The approximate likelihood function is
\[
\frac{1}{(2 \pi c)^{n/2}} \exp\left\{-\frac{1}{2c^2} \sum_{k=1}^{n}M_k^2\right\}.
\]
We will take the derivative of the log-likelihood function and work with that quantity. The first term will be regarded as constant. This is a simplification because $c$ actually depends on the parameters but taking this into account leads to calculations that are difficult to handle. Also, we will not take into account the constant factor before the sum of the $M_k$ but will rather work with the analogue of the information matrix. Therefore, we consider the following analogue of the loglikelihood function:
\[
R_n(\alpha_1,\ldots,\alpha_p,\mu)=-2^{-1}\sum_{k=1}^n M_k^2.
\]
Note that this is the sum that we had to minimize for CLS estimations.
The role of the score vector will be played by
\[
- \nabla R_k(\hbtheta_n) =
\sum_{j=1}^{k}
\widehat{M}_j^{(n)}
\begin{bmatrix}
\bX_{j-1} \\
1 
\end{bmatrix}
.\]
The information matrix $\bI_n$ is defined by
\[
\bI_n
  := \sum_{k=1}^n
      \EE\bigl[ \{\nabla R_k(\btheta) - \nabla R_{k-1}(\btheta)\}
                      \{\nabla R_k(\btheta) - \nabla R_{k-1}(\btheta)\}^\top 
                      \mid \cF_{k-1} \bigr]
                      =
                      \sum_{k=1}^n
                      (\balpha^\top_2 \bX_{k-1} + \sigma^2) \!
                      \mx{\bX_{k-1}} \! \! \mx{\bX_{k-1}}^\top
                      ,
\]
with
\begin{equation}\label{eq:balpha2defi}
\balpha_2:=[\alpha_1(1-\alpha_1), \ldots, \alpha_p(1-\alpha_p)]^\top.
\end{equation}
The estimate $\hbI_n$ is defined by replacing in $\bI_n$ the variance $\sigma^2$ and all the parameters in $\btheta$ by their CLS estimates. This leads to the $p+1$-dimensional test process \ $(\hbcM_n(t))_{0 \leq t \leq 1}$
 \ given by
 \begin{align}\label{hcMn}
  \hbcM_n(t)
  := \hbI_n^{-1/2}
     \sum_{k=1}^\nt
      \hM_k^{(n)}
      \begin{bmatrix}
       \bX_{k-1} \\
       1
      \end{bmatrix}.
 \end{align}
Note that the process \ $(\hbcM_n(t))_{0 \leq t \leq 1}$ \ can also be written in
 the CUSUM form
 \begin{align*}
  \hbcM_n(t)
  &= \hbI_n^{-1/2}
     \left( \sum_{k=1}^\nt
             X_k
             \begin{bmatrix}
              \bX_{k-1} \\
              1
             \end{bmatrix}
            - \bQ_{\nt} \, \bQ_{n}^{-1}
              \sum_{k=1}^n
               X_k
               \begin{bmatrix}
                \bX_{k-1} \\
                1
               \end{bmatrix} \right) \\
  &= \hbI_n^{-1/2} \, \bQ_{\nt}
     \left( \begin{bmatrix}
             \hbalpha_{\nt} \\
             \hmu_{\nt}
            \end{bmatrix}
            - \begin{bmatrix}
               \hbalpha_n \\
               \hmu_n
              \end{bmatrix} \right)
   = \hbI_n^{-1/2} \, \bQ_{\nt} \left( \hbtheta_{\nt} - \hbtheta_n \right) .    
 \end{align*} 
This is close to the CUSUM processes used by \citeA{KLee_09}. In this paper the authors investigated a change in random coefficient INAR($p$) processes based on both maximum-likelihood and conditional least squares estimators. A general convergence theorem was proved for a test process similar to ours based on a broader class of estimators. However, the authors did not state any results under the alternative hypothesis. Our Theorem \ref{theta} is analogous to Theorem 1 of \citeA{KLee_09}. However, while their model is more general than ours, our Theorem \ref{theta} is not a consequence of their Theorem 1, even though the applied methods are largely identical.

\section{Testing procedures}\label{tests}
\begin{defi}
A time-homogeneous INAR($p$) process $(X_k)_{k\geq -p+1}$ is said to satisfy
 condition $\textbf{C}_0$, if $\EE(X_0^6) < \infty$, \dots,
 $\EE(X_{-p+1}^6) < \infty$, \ $\EE(\vare_1^6) < \infty$,
 \ $\alpha_1 + \cdots + \alpha_p < 1$, \ $\mu > 0$ \ all hold for it, and if, furthermore,  \ $\alpha_1 + \cdots + \alpha_p > 0$ \ or \ $\sigma^2 > 0$.
 
 An INAR($p$) process $(X_k)_{k\geq -p+1}$ which satisfies $\mathrm{H_A}$ is said
 to satisfy condition $\textbf{C}_\text{A}$, if
 \ $(X_k)_{-p+1 \leq k \leq\tau}$ \ and \ $(X_k)_{k\geq\tau+1}$ satisfy condition $\textbf{C}_0$.
\end{defi}
Under the null hypothesis we have the following result, which allows the construction of various test statistics.
\begin{Thm}\label{theta}
If $(X_k)_{k\geq -p+1}$ satisfies $\mathrm{H_0}$ and condition
 $\textup{\textbf{C}}_0$, then
 \begin{align*}
  \hbcM_n \distr \bcB \qquad
  \text{as \ $n \to \infty$,}
 \end{align*}
 where \ $(\bcB(t))_{0 \leq t \leq 1}$ \ is a $(p+1)$-dimensional standard Brownian
 bridge, and \ $\distr$ \ denotes convergence in distribution in the
 Skorokhod space \ $D([0,1])$.
\end{Thm}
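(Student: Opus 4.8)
The plan is to reduce the statement to a martingale functional central limit theorem for the partial sums formed from the \emph{true} martingale differences $M_k$, and then to read off the Brownian bridge from the CUSUM structure by ergodic averaging. First I would use the two identities $\hM_k^{(n)} = M_k - (\hbtheta_n - \btheta)^\top \mxk$ and $\hbtheta_n - \btheta = \bQ_n^{-1}\sum_{k=1}^{n} M_k \mxk$ (the latter is immediate from $X_k = M_k + \btheta^\top\mxk$ and the definition of the CLS estimator) to obtain the exact decomposition
\[
\sum_{k=1}^{\nt} \hM_k^{(n)} \mxk
= \sum_{k=1}^{\nt} M_k \mxk - \bQ_{\nt}\,\bQ_n^{-1}\sum_{k=1}^{n} M_k \mxk .
\]
This is the discrete analogue of the bridge transform $U(\cdot)\mapsto U(\cdot)-(\cdot)\,U(1)$, with the matrix $\bQ_{\nt}\bQ_n^{-1}$ playing the role of the scalar $t$; everything then hinges on the asymptotics of $\sum_{k=1}^{\nt} M_k \mxk$ and of the normalising matrices. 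Throughout I use that $\bQ_n$ is invertible with asymptotic probability one (Section~\ref{sec:invert}).

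Next I would establish the martingale FCLT for $\bcM_n(t):=\bI_n^{-1/2}\sum_{k=1}^{\nt} M_k \mxk$. The summands $M_k\mxk$ are martingale differences for the natural filtration, since $\EE(M_k\mid\cF_{k-1})=0$ and $\mxk$ is $\cF_{k-1}$-measurable, and their predictable quadratic variation $\sum_{k=1}^{\nt}(\balpha_2^\top\bX_{k-1}+\sigma^2)\mxk\mxk^\top$ is exactly the partial-sum analogue of $\bI_n$. Under $\textbf{C}_0$ the chain $(\bX_k)$ is stable, hence ergodic, and its stationary law has finite sixth moments; the ergodic theorem therefore yields $n^{-1}\bQ_{\nt}\stoch t\,\overline{\bQ}$ and $n^{-1}\bI_n\stoch\overline{\bI}$ with $\overline{\bQ},\overline{\bI}$ positive definite, whence the normalised quadratic variation $\bI_n^{-1/2}\langle\bcM\rangle_{\nt}\bI_n^{-1/2}\stoch t\,I_{p+1}$. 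The conditional Lindeberg condition follows from a Lyapunov bound with exponent $2+\delta$: the sixth-moment hypothesis makes $\EE\vnorm{M_k\mxk}^{2+\delta}$ finite for $\delta=1$, which controls the truncated second moments after division by $n^{1+\delta/2}$. The martingale FCLT then gives $\bcM_n\distr\bcW$, a standard $(p+1)$-dimensional Brownian motion on $D([0,1])$.

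Finally I would assemble the pieces. Writing $\sum_{k=1}^{\nt}M_k\mxk=\bI_n^{1/2}\bcM_n(t)$, the decomposition above gives
\[
\hbcM_n(t)
= \bigl(\hbI_n^{-1/2}\bI_n^{1/2}\bigr)\,\bcM_n(t)
 - \bigl(\hbI_n^{-1/2}\bI_n^{1/2}\bigr)\bigl(\bI_n^{-1/2}\bQ_{\nt}\bQ_n^{-1}\bI_n^{1/2}\bigr)\,\bcM_n(1).
\]
Consistency of the plug-in estimators under $\mathrm{H_0}$ gives $\hbI_n^{-1/2}\bI_n^{1/2}\stoch I_{p+1}$, while conjugating the ergodic limit $\bQ_{\nt}\bQ_n^{-1}\stoch t\,I_{p+1}$ by the bounded factors $(n^{-1}\bI_n)^{\pm1/2}\stoch\overline{\bI}^{\,\pm1/2}$ yields $\bI_n^{-1/2}\bQ_{\nt}\bQ_n^{-1}\bI_n^{1/2}\stoch t\,I_{p+1}$. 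Hence $\hbcM_n(t)=\bcM_n(t)-t\,\bcM_n(1)+\oo_{\PP}(1)$, and the continuity of the bridge map $x(\cdot)\mapsto x(\cdot)-(\cdot)\,x(1)$ on $D([0,1])$ together with $\bcM_n\distr\bcW$ delivers $\hbcM_n\distr\bcW(\cdot)-(\cdot)\,\bcW(1)=\bcB$. I expect the main obstacle to be the martingale FCLT in the second step—specifically verifying the conditional Lindeberg condition within the sixth-moment budget and upgrading the matrix convergences to hold \emph{uniformly} in $t\in[0,1]$ (a functional ergodic, Glivenko--Cantelli-type argument), since this uniformity is precisely what makes the $\oo_{\PP}(1)$ remainder and the continuous-mapping step legitimate in the Skorokhod topology.
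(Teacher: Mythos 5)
Your proposal is correct and follows essentially the same route as the paper's own proof: the same CLS-error identity and CUSUM decomposition $\sum_{k\leq\nt}\hM_k^{(n)}\mxk=\sum_{k\leq\nt}M_k\mxk-\bQ_{\nt}\bQ_n^{-1}\sum_{k\leq n}M_k\mxk$, the same martingale FCLT for $M_k\mxk$ (ergodic theorem for the quadratic variation, a higher-moment Lindeberg/Lyapunov check within the sixth-moment assumption), and the same assembly via $\bQ_{\nt}\bQ_n^{-1}\to t\bE_{p+1}$ and consistency of $\hbI_n$. The only difference is cosmetic (you normalise by $\bI_n^{-1/2}$ rather than $n^{-1/2}$, and use a Lyapunov exponent $\delta=1$ where the paper bounds conditional fourth moments), and you are in fact more careful than the paper in flagging the uniformity in $t$ needed for the $\oo_{\PP}(1)$ remainder, which follows here since $n^{-1}\bQ_{\nt}$ is entrywise nondecreasing in $t$ with continuous limit.
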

 
By the continuous mapping theorem we obtain the following corollary.

\begin{Cor}\label{sup}
Under the assumptions of Theorem \ref{theta} we have
 \begin{align}
  \sup_{0 \leq t \leq 1} \hcM_n^{(i)}(t)
  &\distr \sup_{0 \leq t \leq 1} B(t) , \label{wsup} \\
  \inf_{0 \leq t \leq 1} \hcM_n^{(i)}(t)
    &\distr \inf_{0 \leq t \leq 1} B(t) , \\
  \sup_{0 \leq t \leq 1} |\hcM_n^{(i)}(t)|
  &\distr \sup_{0 \leq t \leq 1} |B(t)| , \label{wabssup} \\
  \sup_{0 \leq t \leq 1} \hcM_n^{(i)}(t)
  - \inf_{0 \leq t \leq 1} \hcM_n^{(i)}(t)
  &\distr \sup_{0 \leq t \leq 1} B(t) - \inf_{0 \leq t \leq 1} B(t)
  \label{wsup-min}
 \end{align}
 as \ $n \to \infty$, \ where \ $(\hcM_n^{(i)}(t))_{0 \leq t \leq 1}$,
 \ $i=1,\ldots,p+1$, \ denotes the components of
 \ $(\hbcM_n(t))_{0 \leq t \leq 1}$, \ and \ $(B(t))_{0 \leq t \leq 1}$ \ is a Brownian
 bridge.
\end{Cor}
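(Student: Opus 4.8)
The plan is to deduce everything from Theorem~\ref{theta} by the continuous mapping theorem, applied in two stages. First I would pass from the full $(p+1)$-dimensional process to a single coordinate: the projection $\pi_i\colon D([0,1],\RR^{p+1})\to D([0,1],\RR)$ sending a path to its $i$-th component is continuous for the Skorokhod topology, so Theorem~\ref{theta} gives $\hcM_n^{(i)}\distr \pi_i(\bcB)$. Since $\bcB$ is a standard $(p+1)$-dimensional Brownian bridge, its $i$-th coordinate $\pi_i(\bcB)$ is a standard one-dimensional Brownian bridge, which is the process $B$ appearing on the right-hand sides.

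The second stage is to apply the four functionals $\phi_1(f)=\sup_{t\in[0,1]}f(t)$, $\phi_2(f)=\inf_{t\in[0,1]}f(t)$, $\phi_3(f)=\sup_{t\in[0,1]}|f(t)|$ and $\phi_4(f)=\phi_1(f)-\phi_2(f)$ to the one-dimensional paths. The key observation is that each $\phi_j$ is continuous on all of $D([0,1],\RR)$ with respect to the Skorokhod metric $d$. Indeed, if $d(f_n,f)\to0$ then there are homeomorphisms $\lambda_n$ of $[0,1]$ with $\|\lambda_n-\mathrm{id}\|_\infty\to0$ and $\|f_n-f\circ\lambda_n\|_\infty\to0$; because the supremum and infimum over $[0,1]$ are invariant under such time changes, $\phi_1(f\circ\lambda_n)=\phi_1(f)$ and $\phi_2(f\circ\lambda_n)=\phi_2(f)$, whence $|\phi_j(f_n)-\phi_j(f)|\leq\|f_n-f\circ\lambda_n\|_\infty\to0$ for $j=1,2,3$, while $\phi_4$ is continuous as a difference of continuous maps. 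Applying the continuous mapping theorem to each $\phi_j$ then yields $\phi_j(\hcM_n^{(i)})\distr\phi_j(B)$, which are precisely \eqref{wsup}--\eqref{wsup-min}.

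I do not expect a serious obstacle here, since the statement is a routine corollary of Theorem~\ref{theta}; the only point requiring care is the Skorokhod-continuity of the functionals. Even this is softened by the fact that $\bcB$, and hence $B$, has continuous sample paths almost surely, so it would suffice to check continuity of each $\phi_j$ merely at continuous arguments before invoking the version of the continuous mapping theorem that permits discontinuities on a null set of the limit law. The time-change invariance noted above makes the functionals continuous everywhere on $D([0,1])$, so in fact this weaker route is not even needed.
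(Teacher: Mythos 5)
Your proof is correct and follows exactly the route the paper takes: the paper's entire justification for Corollary~\ref{sup} is the single remark that it follows from Theorem~\ref{theta} by the continuous mapping theorem. Your two-stage argument (coordinate projection, then the Skorokhod-continuity of the supremum, infimum, absolute-supremum and range functionals) simply supplies the details that the paper leaves implicit.
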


Theorem \ref{theta} is proved in Appendix \ref{app_B}.

Since \ $(\bcB(t))_{0 \leq t \leq 1}$ \ in Theorem \ref{theta} has independent
 components, we need to define the tests component-wise only. 
For simultaneous test-for-change in \ $d$ \ parameters, to have an overall
 level of significance \ $\alpha$, \ we use
 \ $\alpha^* := 1 - (1 - \alpha)^{1/d}$ \ for each component. We can test for change in a single component, $\theta^{(i)}, \ i \in PV$ (with $\theta^{(i)}=\alpha_i$ for $i=1,\ 2,\  \ldots,\ p$ and $\theta^{(p+1)}=\mu$, according to the definition of $\btheta$) in the following way:

Three different tests can be constructed:

\textbf{Test 1 (one-sided)}: If
 \begin{align*}
  \sup_{0 \leq t \leq 1} \hcM_n^{(i)}(t) \geq C_1(\alpha^*) \quad \text{or} \quad \inf_{0 \leq t \leq 1} \hcM_n^{(i)}(t) \leq - C_1(\alpha^*),
 \end{align*}
 then we conclude that there was a downward or upward change in parameter \ $\theta^{(i)}$ (respectively) along the sequence \ $X_0$, $X_1$, \dots, $X_n$.
 
% Revision up to this point 1203220938 

\textbf{Test 2 (two-sided)}: If
 \begin{align*}
  \sup_{0 \leq t \leq 1} |\hcM_n^{(i)}(t)| \geq C_2(\alpha^*) ,
 \end{align*}
 then we conclude that there was a change in parameter \ $\theta^{(i)}$
 along the sequence \ $X_0$, $X_1$, \dots, $X_n$.

The third test is designed against the so-called epidemic alternative
 (see \citeNP{CsHorv_97}, 1.7.4) and is included for the sake of completeness, following \citeA{Gomb_08}. No results will be proved under this alternative hypothesis.

\textbf{Test 3}: If
 \begin{align*}
  \sup_{0 \leq t \leq 1} \hcM_n^{(i)}(t)
  - \inf_{0 \leq t \leq 1} \hcM_n^{(i)}(t)
  \geq C_3(\alpha^*) ,
 \end{align*}
 then we conclude that there was a temporary change in parameter \ $\theta^{(i)}$
 along the sequence \ $X_0$, $X_1$, \dots, $X_n$.

Critical values are obtained from the limit distributions in Corollary
 \ref{sup}, namely, from the identities
 \begin{align*}
  \PP\left( \sup_{0 \leq t \leq 1} B(t) \geq x \right)
  &= \ee^{-2 x^2} , \quad x \geq 0,\\
  \PP\left( \sup_{0 \leq t \leq 1} |B(t)| \geq x \right)
  &= 2 \sum_{k = 1}^\infty (-1)^{k+1} \ee^{-2 k^2 x^2} , \quad x \geq 0,\\
  \PP\left( \sup_{0 \leq t \leq 1} B(t) - \inf_{0 \leq t \leq 1} B(t) \geq x \right)
  &= 1 - 2 \sum_{k = 1}^\infty (4 k^2 x^2 -1) \ee^{-2 k^2 x^2} , 
 \end{align*}
 respectively, where \ $(B(t))_{0 \leq t \leq 1}$ \ is a Brownian bridge (for
 the third relationship, see \citeNP{Kuip_60}).

\section{The test under the alternative hypothesis}\label{change-point_estimation}
In this section we will state the two main results of this paper which extend the results of \citeA{HPS_07} and \citeA{Gomb_08} to the INAR($p$) process.
\subsection{Consistency of the test}
The following theorem, the analogue of Theorem 3.1 in \citeA{HPS_07}, describes the behaviour of the maximum of the test process if a change occurs in the mean of the innovation. An immediate consequence of the theorem is that the maximum of the process tends to infinity stochastically as \ $n \to \infty$, \ which suffices for the weak consistency of the proposed test.

\begin{Thm}\label{HA_mu_1}
Suppose that \ $\mathrm{H}_\mathrm{A}$ \ holds with
 \ $\tau = \max(\lfloor n \rho \rfloor,1)$, \ $\rho \in \left(0, 1\right)$, \ and the change is
 only in the innovation mean, namely, the innovation mean changes from
 \ $\mu'$ \ to \ $\mu''$, \ where \ $\mu' > \mu'' > 0$.
\ Suppose that \ $\EE(X_0^6) < \infty$, \dots, $\EE(X_{-p+1}^6) < \infty$,
 \ $\EE(\vare_1^6) < \infty$, \ $\EE(\vare_{\tau+1}^6) < \infty$,
 \ $\alpha_1 + \cdots + \alpha_p < 1$ \ and \ $\mu > 0$ \ hold both before and after the change.
Assume that \ $\alpha_1 + \cdots + \alpha_p > 0$ \ or \ $\sigma^2 > 0$.
\ Then for any \ $\gamma \in \left(0, \frac14\right)$ \ we have
 \[
   \max_{1 \leq k \leq n} \sum_{j = 1}^k \hM_j^{(n)}
   = n \psi
     + \OO_{\PP}(n^{1-\gamma}) \qquad \text{as \ $n \to \infty$,}
 \]
 with
 \[
   \psi := \rho (1-\rho) (\mu'-\mu'')
     \be_{p+1}^\top
     \bC'' \tbQ^{-1} \bC'
     \be_{p+1} > 0 ,
 \]
 where \ $\be_{p+1}$ \ is the \ $p+1$-st \textup{(}$p+1$-dimensional\textup{)}
 unit vector,
 \[
   \bC' := \EE\left( \begin{bmatrix}
                      \tbX' \\
                      1
                     \end{bmatrix}
                     \begin{bmatrix}
                      \tbX' \\
                      1
                     \end{bmatrix}^\top \right) , \qquad
   \bC'' := \EE\left( \begin{bmatrix}
                       \tbX'' \\
                       1
                      \end{bmatrix}
                      \begin{bmatrix}
                       \tbX'' \\
                       1
                      \end{bmatrix}^\top \right) ,
 \]
 where the distributions of
 \[
   \tbX' = \begin{bmatrix}
            \tX'_0 \\
            \vdots \\
            \tX'_{-p+1}
           \end{bmatrix} , \qquad
   \tbX'' = \begin{bmatrix}
             \tX''_0 \\
             \vdots \\
             \tX''_{-p+1}
            \end{bmatrix}
 \]
 are the unique stationary distributions of the Markov chains
 \ $(\bX_k)_{0 \leq k \leq \lfloor n \rho \rfloor}$ \ and
 \ $(\bX_k)_{\lfloor n \rho \rfloor + 1 \leq k \leq n}$, \ respectively, and
 \[
   \tbQ := \rho \bC' + (1-\rho) \bC''.
 \]
\end{Thm}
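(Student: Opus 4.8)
The plan is to start from the exact identity
\[
\sum_{j=1}^{k}\hM_j^{(n)} = \be_{p+1}^\top\bQ_k\bigl(\hbtheta_k - \hbtheta_n\bigr),
\]
which holds because the last coordinate of $[\bX_{j-1};1]$ is $1$ and $\sum_{j=1}^k X_j[\bX_{j-1};1]=\bQ_k\hbtheta_k$ (so in particular $\be_{p+1}^\top\bQ_k\hbtheta_k=\sum_{j\le k}X_j$, and the right-hand side is well defined even when $\bQ_k$ is not). This is the same computation that produced the CUSUM form of $\hbcM_n$. Writing $k=\nt$, the problem reduces to understanding the deterministic skeleton of $\tfrac1n\be_{p+1}^\top\bQ_{\nt}(\hbtheta_{\nt}-\hbtheta_n)$ as a function of $t\in[0,1]$, together with a uniform control of the fluctuations.

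First I would identify the limit. Since the pre-change chain $(\bX_k)_{k\le\tau}$ and the post-change chain $(\bX_k)_{k>\tau}$ are each stable, hence ergodic, the ergodic theorem gives $\tfrac1n\bQ_{\nt}\to\bQ(t)$, where $\bQ(t)=t\bC'$ for $t\le\rho$ and $\bQ(t)=\rho\bC'+(t-\rho)\bC''$ for $t>\rho$, so $\bQ(1)=\tbQ$. The same averaging, together with Lemma \ref{lem:asymperror}, yields $\hbtheta_{\nt}\to\btheta(t):=\bQ(t)^{-1}\bigl(\min(t,\rho)\bC'\btheta'+\max(t-\rho,0)\bC''\btheta''\bigr)$ and $\hbtheta_n\to\tbtheta:=\tbQ^{-1}(\rho\bC'\btheta'+(1-\rho)\bC''\btheta'')$, where $\btheta'=[\balpha;\mu']$ and $\btheta''=[\balpha;\mu'']$. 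Setting $g(t):=\be_{p+1}^\top\bQ(t)(\btheta(t)-\tbtheta)$, the key algebraic step is the simplification, valid because $\btheta'-\btheta''=(\mu'-\mu'')\be_{p+1}$,
\[
\btheta'-\tbtheta=(1-\rho)(\mu'-\mu'')\tbQ^{-1}\bC''\be_{p+1},\qquad \btheta''-\tbtheta=-\rho(\mu'-\mu'')\tbQ^{-1}\bC'\be_{p+1}.
\]
Substituting these, together with $\bQ(t)\btheta(t)=\min(t,\rho)\bC'\btheta'+\max(t-\rho,0)\bC''\btheta''$, shows that $g$ is piecewise linear, increasing on $[0,\rho]$ and decreasing on $[\rho,1]$, with $g(0)=g(1)=0$ and $g(\rho)=\psi$. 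Hence $\max_{0\le t\le1}g(t)=g(\rho)=\psi$, which also explains why the maximizing index sits at $k\approx\tau=\nrho$.

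Next I would prove $\psi>0$. Because the symmetry of $\bC'$, $\bC''$, $\tbQ$ gives $\be_{p+1}^\top\bC'\tbQ^{-1}\bC''\be_{p+1}=\be_{p+1}^\top\bC''\tbQ^{-1}\bC'\be_{p+1}$, it suffices to evaluate this scalar. Using that the last column of $\bC'$ (resp.\ $\bC''$) is $[\EE\tbX';1]$ (resp.\ $[\EE\tbX'';1]$) and that the two stationary mean vectors are positive multiples of $\bone_p$, namely $\tfrac{\mu'}{1-\sum_{i=1}^p\alpha_i}\bone_p$ and $\tfrac{\mu''}{1-\sum_{i=1}^p\alpha_i}\bone_p$, a direct computation — the exact analogue of the $p=1$ case, where the form reduces to $\bigl(\rho\var(\tX'_0)+(1-\rho)\var(\tX''_0)\bigr)/\det\tbQ$ — shows the quantity equals a strictly positive combination of the (co)variances of the two stationary laws divided by $\det\tbQ>0$. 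Nondegeneracy of these laws, guaranteed by the assumption $\alpha_1+\cdots+\alpha_p>0$ or $\sigma^2>0$, then forces $\psi>0$, so the tent-shaped $g$ indeed points upward.

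Finally I would control the error $\max_{1\le k\le n}\sum_{j=1}^k\hM_j^{(n)}-n\psi$ uniformly in $k$. Decomposing $\bQ_{\nt}(\hbtheta_{\nt}-\hbtheta_n)$ as $n\bQ(t)(\btheta(t)-\tbtheta)$ plus remainders, the fluctuations come from three sources: the vector martingale $\sum_{j\le k}[\bX_{j-1};1]M_j$, the centered ergodic average $\bQ_{\nt}-n\bQ(t)$, and the estimation error $\hbtheta_n-\tbtheta=\OO_\PP(n^{-1/2})$. In the pre-change region $t\le\rho$ the object is, up to the known centering, exactly $\sum_{j\le\nt}[\bX_{j-1};1]M_j$ (since there $X_j-\btheta'^\top[\bX_{j-1};1]=M_j$), so Doob's maximal inequality — applicable because the sixth-moment hypotheses make the conditional quadratic characteristic grow linearly — gives a uniform $\OO_\PP(\sqrt n)$ bound. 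The post-change region $t>\rho$ is the main obstacle: there the running estimator $\hbtheta_{\nt}$ is a nonlinear functional of partial sums drawn from both regimes, so one must expand $\bQ_{\nt}^{-1}$ around $(n\bQ(t))^{-1}$ and combine a H\'ajek--R\'enyi/maximal inequality for the martingale part with uniform ergodic bounds for $\bQ_{\nt}$; carrying this out yields a uniform remainder of order $\OO_\PP(n^{1-\gamma})$ for every $\gamma<\tfrac14$ (the exponent being an artifact of the non-sharp uniform inversion, since a finer analysis would presumably give $\OO_\PP(\sqrt n)$). Assembling the three bounds and using the continuity of $g$ to transfer $\max_k$ to $\max_t g(t)=\psi$ completes the proof.
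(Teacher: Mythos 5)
Your identification of the deterministic skeleton is sound: the tent-shaped function $g$ with peak $g(\rho)=\psi$ is exactly the content of the paper's Lemma \ref{lem:asymperror} (and your positivity sketch, carried out only for $p=1$, can be repaired for all $p$ by noting that $\bC''\tbQ^{-1}\bC'=\bigl(\rho(\bC'')^{-1}+(1-\rho)(\bC')^{-1}\bigr)^{-1}$ is positive definite once $\bC'$ and $\bC''$ are, which is what \ref{sec:invert} supplies). The genuine gap is in the fluctuation control, i.e.\ in the part of the statement that actually carries the rate $\OO_{\PP}(n^{1-\gamma})$, $\gamma<\tfrac14$. In the pre-change region your claim that the error is ``exactly'' the martingale and hence $\OO_{\PP}(\sqrt n)$ by Doob is false. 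Writing $\sum_{j\leq k}\hM_j^{(n)}=\sum_{j\leq k}M_j'+(\btheta'-\hbtheta_n)^\top\sum_{j\leq k}(\bX_{j-1}^\top,1)^\top$ and splitting $\btheta'-\hbtheta_n=(\btheta'-\tbtheta)+(\tbtheta-\hbtheta_n)$, the piece $(\btheta'-\tbtheta)^\top\sum_{j\leq k}\bigl((\bX_{j-1}^\top,1)^\top-\EE(\bX_{j-1}^\top,1)^\top\bigr)$ enters with the coefficient $\btheta'-\tbtheta=(1-\rho)(\mu'-\mu'')\tbQ^{-1}\bC''\be_{p+1}\neq\bzero$: under the alternative the full-sample estimator is biased, so this term does not vanish. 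Its partial sums have correlated increments that are not martingale differences, so Doob's inequality does not apply; this is precisely the term that dictates the exponent. The paper controls it with the Kokoszka--Leipus maximal inequality (Lemma \ref{kole}), applied with weights $c_k=k^{\gamma-1}$ in Lemma \ref{kgamma} and fed by the covariance bound of Lemma \ref{vari}; the restriction $\gamma<\tfrac14$ arises because the middle term of Lemma \ref{kole} behaves like $\sum_k k^{2\gamma-3/2}$, which converges only for $\gamma<\tfrac14$. Your diagnosis that the exponent is ``an artifact of the non-sharp uniform inversion'' misplaces the difficulty entirely: it has nothing to do with inverting $\bQ_{\nt}$.

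In the post-change region you prove nothing: the sentence ``carrying this out yields a uniform remainder of order $\OO_{\PP}(n^{1-\gamma})$'' asserts exactly the estimate that constitutes the theorem. Moreover, the obstacle you face there---uniform expansion of $\bQ_{\nt}^{-1}$, which is not even defined for $\nt\leq p$ since $\bQ_{\nt}$ is then a sum of fewer than $p+1$ rank-one matrices---is self-inflicted by your starting identity, which drags the intermediate estimators $\hbtheta_{\nt}$ into the problem. The paper never inverts $\bQ_k$ for intermediate $k$: it decomposes $\hM_k^{(n)}$ directly as in \eqref{decomp} into a true martingale difference, a deterministic drift, a centered fluctuation, and an estimation-error term, all linear in the single estimator $\hbtheta_n$, and then bounds the four maxima in \eqref{eq:max_ineq} separately (drift $=n\psi+\OO(1)$ via Lemma \ref{lem:asymperror} together with the geometric ergodicity of Lemma \ref{geom}, which your pointwise ergodic-theorem argument also silently needs; estimation error $=\OO_{\PP}(\sqrt n)$ via Lemma \ref{altestconv}; martingale and centered terms $=\OO_{\PP}(n^{1-\gamma})$ via Lemmas \ref{kole} and \ref{kgamma}). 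To complete your route you would have to supply a H\'ajek--R\'enyi-type maximal inequality for dependent, non-martingale partial sums---which is precisely the paper's Lemma \ref{kole}---and restructure the decomposition so that only $\hbtheta_n$ appears; at that point you would have reproduced the paper's proof.
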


\begin{remark}
Theorem \ref{HA_mu_1} can be reformulated for a change in the other parameters
 as well. 
In other cases we have to investigate the maximum or minimum of
 \ $\sum_{j=1}^k\hM_j^{(n)}X_{j-q}$ \ for some $1 \leq q \leq p$. 
\ This will be explored in more detail in \ref{adapt}. 
One warning is given here for emphasis, although it will be repeated later on, namely, the one-sided test should only be used if we are certain
 that only one parameter of the process has changed.
\end{remark}

\subsection{Estimation of the change point}

Based on the score vector analogy described in Appendix A, the estimator of
 \ $\tau$ \ is
 \begin{align}\label{chp1}
   \htau_n
   := \min \left\{ k \in \{1, \dots, n\}
                  : \sum_{j = 1}^k \hM_j^{(n)}
                    = \max_{1 \leq m \leq n} \sum_{j = 1}^m \hM_j^{(n)} \right\}
 \end{align}
 for the downward one-sided test,
 \begin{align}\label{eq:chp2}
   \htau_n
   = \min \left\{ k \in \{1, \dots, n\}
                  : \sum_{j = 1}^k \hM_j^{(n)}
                    = \min_{1 \leq m \leq n}
                       \sum_{j = 1}^m \hM_j^{(n)} \right\}
 \end{align}
 for the upward one-sided test, and
 \begin{align}\label{eq:chp3}
   \htau_n
   = \min \left\{ k \in \{1, \dots, n\}
                  : \left| \sum_{j = 1}^k \hM_j^{(n)} \right|
                    = \max_{1 \leq m \leq n} \left| \sum_{j = 1}^m \hM_j^{(n)} \right|
          \right\}
 \end{align}
 for the two-sided test.

If there is a change in the coefficient \ $\alpha_i$, \ then the estimator
 of \ $\tau$ \ is based on the process
 \ $\sum_{j = 1}^k \hM_j^{(n)} X_{j-i}$, \ $k = 1, \dots, n$.

\begin{Thm}\label{HA_mu_2}
Under the assumptions of Theorem \ref{HA_mu_1}, we have
 \[
   \htau_n - \lfloor n \rho \rfloor
   = \OO_{\PP}(1) \qquad \text{as \ $n \to \infty$,}
 \]
 where \ $\htau_n$ \ is defined by \eqref{chp1}.
\end{Thm}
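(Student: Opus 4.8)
The plan is to exploit the fact that $S_k := \sum_{j=1}^k \hM_j^{(n)}$ has an approximately deterministic ``tent'' shape: by the last (intercept) normal equation we have $S_n = \sum_{j=1}^n \hM_j^{(n)} = 0$, while Theorem \ref{HA_mu_1} gives $\max_{1\le k\le n} S_k = n\psi + \OO_{\PP}(n^{1-\gamma})$ with $\psi>0$, so the process rises to height $\approx n\psi$ near $k=\nrho$ and falls back to $0$. Since $\htau_n$ is the \emph{first} maximiser, if $\htau_n>\nrho+K$ then the maximiser itself witnesses $S_{\nrho+m}-S_{\nrho}\ge 0$ for some $m>K$, and symmetrically on the left; hence
\[
\{\,|\htau_n-\nrho|>K\,\}\subseteq\bigl\{\exists\,m>K:\ S_{\nrho+m}-S_{\nrho}\ge 0\bigr\}\cup\bigl\{\exists\,m>K:\ S_{\nrho-m}-S_{\nrho}\ge 0\bigr\}.
\]
Thus it suffices to show that for every $\epsilon>0$ there is a $K$ making the probability of the right-hand set smaller than $\epsilon$ for all large $n$; this is exactly the assertion $\htau_n-\nrho=\OO_{\PP}(1)$.

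First I would split each increment into a genuine martingale term and a drift term. Writing $M_j:=X_j-\btheta_j^\top\begin{bmatrix}\bX_{j-1}\\1\end{bmatrix}$ for the true (time-varying) innovation-martingale difference, and inserting the probability limit $\tbtheta$ of $\hbtheta_n$ from Lemma \ref{lem:asymperror}, we obtain
\[
\hM_j^{(n)}=M_j+(\btheta_j-\tbtheta)^\top\!\begin{bmatrix}\bX_{j-1}\\1\end{bmatrix}+(\tbtheta-\hbtheta_n)^\top\!\begin{bmatrix}\bX_{j-1}\\1\end{bmatrix}.
\]
On the post-change side ($j>\nrho$, so $\btheta_j=\btheta''$) the middle term is $\cF_{j-1}$-measurable, and once the chain has relaxed to its post-change stationary law (geometrically fast, so only an $\OO(1)$ transient near $\nrho$ is affected) its per-step mean equals $(\btheta''-\tbtheta)^\top\bC''\be_{p+1}$. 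Using the explicit form $\tbtheta=\tbQ^{-1}\bigl(\rho\,\bC'\btheta'+(1-\rho)\bC''\btheta''\bigr)$ one computes $\btheta''-\tbtheta=-\rho(\mu'-\mu'')\tbQ^{-1}\bC'\be_{p+1}$, whence this mean equals precisely $-\psi/(1-\rho)<0$ (a pleasant consistency check with the tent having base $n$ and peak $n\psi$); the pre-change per-step drift is $+\psi/\rho>0$. The remainder term is controlled by $\|\hbtheta_n-\tbtheta\|=\OO_{\PP}(n^{-1/2})$ together with the crude but \emph{uniform} bound $\sum_{j=\nrho+1}^{\nrho+m}X_{j-q}\le\sum_{j=1}^n X_{j-q}=\OO_{\PP}(n)$, valid because the summands are nonnegative; its contribution to $S_{\nrho+m}-S_{\nrho}$ is therefore $\OO_{\PP}(m/\sqrt n)=\oo_{\PP}(m)$, a per-step correction of order $\oo_{\PP}(1)$ that cannot overturn the $\Theta(1)$ tent drift once $n$ is large.

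It then remains to dominate the martingale part uniformly over windows of length up to order $n$. For this I would apply a H\'ajek--R\'enyi (Kolmogorov-type) maximal inequality to $W_m:=\sum_{j=\nrho+1}^{\nrho+m}M_j$: the assumed sixth moments yield uniformly bounded conditional variances $\EE[M_j^2\mid\cF_{j-1}]=\balpha_2^\top\bX_{j-1}+\sigma^2$ with finite stationary mean, giving $\PP\bigl(\sup_{m>K}|W_m|/m\ge\lambda\bigr)\le C/(\lambda^2K)$ uniformly in $n$, which tends to $0$ as $K\to\infty$. Combining the three pieces, for $m>K$ the right-hand increment is bounded above by $-\tfrac{\psi}{1-\rho}\,m+|W_m|+\oo_{\PP}(m)+\OO(1)$, which is strictly negative simultaneously for all $m>K$ with probability at least $1-\epsilon$ once $K$ and $n$ are large; the left-hand side, driven by the positive pre-change slope $\psi/\rho$, is handled identically.

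The main obstacle I anticipate is the uniform-in-$n$, uniform-in-$m$ bookkeeping of the two error sources at once: the ergodic fluctuation of the drift term and the non-adaptedness of $\hbtheta_n$, which depends on the whole sample and so is not $\cF_{j-1}$-measurable. The device that makes both tractable is to peel off the deterministic per-step drift first and then show every remaining fluctuation is $\oo_{\PP}(m)$ uniformly—the H\'ajek--R\'enyi inequality handling the martingale part and the monotone nonnegative bound on $\sum X_{j-q}$ handling the estimation-error part—after which the genuine drift of size $\Theta(m)$ dominates for all $m>K$, pinning the maximiser within $\OO_{\PP}(1)$ of $\nrho$.
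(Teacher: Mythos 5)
Your overall architecture coincides with the paper's: reduce \ $\{|\htau_n-\nrho|>K\}$ \ to the event that some increment \ $S_{\nrho\pm m}-S_{\nrho}$, \ $m>K$, \ is nonnegative; decompose \ $\hM_j^{(n)}$ \ into a true martingale difference, a drift of per-step size \ $-\psi/(1-\rho)$ \ (resp.\ $+\psi/\rho$), and an estimation-error term; and let the drift dominate after uniform control of the fluctuations (your identity \ $\btheta''-\tbtheta=-\rho(\mu'-\mu'')\tbQ^{-1}\bC'\be_{p+1}$ \ and the resulting drift values agree with Lemma \ref{lem:asymperror}, and your martingale step is the second half of Lemma \ref{limsupk}). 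There is, however, a genuine gap in your treatment of the estimation-error term. You claim its contribution to \ $S_{\nrho+m}-S_{\nrho}$ \ is \ $\OO_{\PP}(m/\sqrt n)$, \ citing the crude bound \ $\sum_{j=\nrho+1}^{\nrho+m}X_{j-q}\le\sum_{j=1}^{n}X_{j-q}=\OO_{\PP}(n)$. \ That bound only yields
\[
\Bigl|(\tbtheta-\hbtheta_n)^\top\sum_{j=\nrho+1}^{\nrho+m}\mx{\bX_{j-1}}\Bigr|
\le \bigl\|\tbtheta-\hbtheta_n\bigr\|\,\Bigl\|\sum_{j=\nrho+1}^{\nrho+m}\mx{\bX_{j-1}}\Bigr\|
= \OO_{\PP}(n^{-1/2})\cdot\OO_{\PP}(n)=\OO_{\PP}(n^{1/2})
\]
uniformly in \ $m$, \ which is \emph{not} \ $\oo_{\PP}(m)$ \ in the critical range \ $K\le m\ll\sqrt n$: \ there the drift is only \ $\Theta(m)$, \ so an error of possible size \ $C\sqrt n$ \ (with the constant coming from the tightness statement of Lemma \ref{altestconv}, hence not at your disposal) can overturn it, and the conclusion that the increment is negative simultaneously for \emph{all} \ $m>K$ \ collapses. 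What is actually needed is \ $\sup_{m>K}m^{-1}\bigl\|\sum_{j=\nrho+1}^{\nrho+m}\mx{\bX_{j-1}}\bigr\|=\OO_{\PP}(1)$, \ and this does not come for free from nonnegativity: it requires separating \ $\EE(\bX_{j-1})$ \ (bounded, by \eqref{fergodic} and \eqref{afergodic}) from the centered fluctuation and controlling the latter by a maximal inequality.

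A second, related omission concerns the drift term itself: in your three-term decomposition it is \ $(\btheta''-\tbtheta)^\top$ \ applied to the \emph{random} regressor \ $\bX_{j-1}$, \ so its centered fluctuation---which you correctly flag as an obstacle---also needs a uniform-over-windows bound of order \ $\oo_{\PP}(m)$; \ the martingale H\'ajek--R\'enyi inequality you invoke does not apply to it, because \ $(\bX_{j-1}-\EE(\bX_{j-1}))_{j}$ \ is not a martingale difference sequence. Both holes are closed by a single maneuver, which is precisely what the paper does: use the finer four-term decomposition \eqref{decomp}, in which \ $\EE(\bX_{k-1})$ \ appears in the drift and in the \ $(\tbtheta-\hbtheta_n)$-term, so that all regressor randomness is isolated in \ $(\balpha-\hbalpha_n)^\top(\bX_{k-1}-\EE(\bX_{k-1}))$ \ with an \ $\OO_{\PP}(1)$ \ coefficient, and then bound the window-averages of both \ $\bX_{j-1}-\EE(\bX_{j-1})$ \ and \ $M_j$ \ via the covariance-form maximal inequality of Lemma \ref{kole} (valid for arbitrary $L^2$ sequences), whose hypotheses are supplied by the \ $\OO(n)$ \ partial-sum variance bound of Lemma \ref{vari}; this is exactly Lemma \ref{limsupk}. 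With that substitution your outline becomes the paper's proof.
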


\begin{remark}
We used the change-point estimator from \eqref{chp1} because in Theorem \ref{HA_mu_1}, the mean $\mu$ changes downward. If it changed upward, then we should apply the estimator from \eqref{eq:chp2} because it corresponds to the appropriate one-sided test. The estimator from \eqref{eq:chp3} corresponds to the two-sided test and satisfies the statement of Theorem \ref{HA_mu_2} regardless of the direction of the change. The proof of this is analogous to the proof of Theorem \ref{HA_mu_2} but even more technical, therefore it will be omitted.
\end{remark}

\begin{remark}
Similarly to Theorem \ref{HA_mu_1}, Theorem \ref{HA_mu_2} holds for a
 change in other parameters as well. We also note that the result is slightly stronger than the similar Proposition 3.1 in \citeA{HPS_07}. Similar results are valid for change in a location parameter (see \citeNP{CsHorv_97}), and in these cases the limit distribution is nondegenerate. Therefore we can conjecture that Theorem \ref{HA_mu_2} cannot be improved upon in terms of convergence rate.
\end{remark}

Theorem \ref{HA_mu_1} and Theorem \ref{HA_mu_2} are proved in Appendix \ref{app_B}. If we would like to prove different versions of these theorems (e.g., if we postulate a change in a parameter other than $\mu$), we need to slightly modify the proofs in several points. Any points where these modifications are not straightforward will be indicated in \ref{adapt}.

\section{Illustration}

Now we provide two real data examples of the use of our method. Since our model includes initial values, the series were not investigated in their full length, but the first $p$ values were taken as the initial values $X_{-p+1}, \ldots, X_0$.

Our first example is the dataset of monthly polio cases in the US, as reported by the Centers for Disease Control and Prevention. It is available online at \citeA{Hyndm_URL} and is 166 long. In \citeA{KLee_09} the authors found a significant decreasing trend in this series, while in \citeA{Davis_09} and \shortciteA{Davis_00} the trend was found insignificant. It is widely agreed (see also \citeNP{Silva_05}) that the underlying process is first-order, which is also supported by the partial autocorrelation function. Therefore we treated it as an INAR(1) process and calculated the CLS estimates given by \eqref{CLSests}. They were $\halpha_1 = 0.30646$ and $\hmu = 0.94091$. The maximum of the absolute value of $\hcM_{166}^{(1)}$ was 1.2647 and the maximum of the absolute value of $\hcM_{166}^{(2)}$ was 1.1232. Applying the two-sided test simultaneously to the two parameters and requiring an overall significance level of 0.05, the critical value for each component is 1.48 (the individual significance levels are $1-\sqrt{0.95} \approxeq 0.0253$), therefore, the null hypothesis is not rejected.

Our second example is a dataset of public drunkenness intakes in Minneapolis, also accessible at \citeA{Hyndm_URL}. This dataset is 139 long. After an examination of the partial autocorrelation function a seasonal INAR(12) model seems a rational choice, but with the assumption that only $\alpha_1$ and $\alpha_{12}$ are nonzero (for another similar calculation, see the real data section in \citeNP{BarczAsymp_10}). The estimates are
\[
\begin{bmatrix}
\halpha_1 \\
\halpha_{12} \\
\hmu
\end{bmatrix}
= \left(\sum_{k=1}^n
\begin{bmatrix}
X_{k-1} \\
X_{k-12} \\
1
\end{bmatrix}
\begin{bmatrix}
X_{k-1} \\
X_{k-12} \\
1
\end{bmatrix}^\top\right)^{-1}
\sum_{k=1}^n X_k
\begin{bmatrix}
X_{k-1} \\
X_{k-12} \\
1
\end{bmatrix}
=
\begin{bmatrix}
0.8154 \\
0.1419 \\
9.6944
\end{bmatrix}.
\]

The maxima of the absolute values of the respective components of $\hcM_n$ are 2.0333, 1.3497 and 1.5788. A comparison with the critical value of 1.545 (individual significance of approximately 0.017) results in the rejection of the null hypothesis. Based on $\left(\sum_{i=1}^k \hM_k^{(n)}X_{k-1}\right)_{k=1}^{139}$ our estimate for the change point is 41 (i.e., the 53rd entry in the original series). Repeating the procedure for the series before and after the change, the null hypothesis is accepted for both of them. For the series after the change, the CLS estimate of $\alpha_{12}$ is negative but an inspection of the partial autocorrelation function reveals that this series is more appropriately modeled as an INAR(1) process, for which the parameter estimates are $\halpha_1 = 0.8915$ and $\hmu = 24.8429$ and the null hypothesis is accepted.

\appendix

\section{The process under the null hypothesis}\label{app_A}

In this section we will give some results for the process under the null hypothesis, including a lemma for ergodic convergence rate. The conditional least squares estimators will be calculated and we will give details on the calculation of the test process \eqref{hcMn}.

\subsection{Regression equations}\label{regreq}
The INAR($p$) process is formally analogous to the AR($p$) process. To exploit this analogy we need to state several regression equations for the process. First we write the equivalent of \eqref{GWI} for the vector-valued process $(\bX_k)_{k \in \NN}$.
\begin{equation}\label{eq:stochregression}
\bX_k = \sum_{i=1}^{p}\sum\limits_{j=1}^{X_{k-i}}\boldsymbol{\xi}_{i,k,j}+\boldsymbol{\vare}_k,
\end{equation}
where
\begin{equation*}
\boldsymbol{\xi}_{1,k,j} = 
\begin{bmatrix}
\xi_{1,k,j} \\
1 \\
0 \\
\vdots \\
0
\end{bmatrix}
,\ 
\boldsymbol{\xi}_{2,k,j} = 
\begin{bmatrix}
\xi_{2,k,j} \\
0 \\
1 \\
\vdots \\
0
\end{bmatrix}
,\ \ldots ,\ 
\boldsymbol{\xi}_{p,k,j} = 
\begin{bmatrix}
\xi_{p,k,j} \\
0 \\
0 \\
\vdots \\
0
\end{bmatrix}
,\ 
\bvare_k = 
\begin{bmatrix}
\vare_k \\
0 \\
0 \\
\vdots \\
0
\end{bmatrix}
.
\end{equation*}

This form makes it even more apparent that the INAR($p$) process is a special multitype branching process with immigration. According to standard literature (see, e.g., \citeNP{Qu_70}), if the matrix
\begin{equation}\label{Amtxdefi}
\bA:=
\begin{bmatrix}
\EE(\bxi_{1,1,1}) & \cdots & \EE(\bxi_{p,1,1})
\end{bmatrix}
=
\begin{bmatrix}
\alpha_1 & \alpha_2 & \cdots & \alpha_{p-1} & \alpha_p \\
1 & 0 & \cdots & 0 & 0 \\
\vdots & \vdots & \ddots & \vdots & \vdots \\
0 & 0 & \cdots & 0 & 0 \\
0 & 0 & \cdots & 1 & 0
\end{bmatrix}
\end{equation}
is primitive (i.e., some power of it is elementwise positive), the ergodicity of the process depends only on the spectral radius $\rho(A)$, and the process is ergodic if $\rho(A) < 1$. In \citeA{BarczAsymp_10}, (2.7), it is shown that this is equivalent to the condition that $\alpha_1 + \ldots + \alpha_p < 1$.

Recalling the $M_k$ martingale differences from \eqref{Mdefi} we can write
\begin{equation}\label{vectorregression}
\bX_k = \bA \bX_{k-1} + (\mu + M_k) \be_1,
\end{equation}
where $\be_1$ is the first unit vector.
Based on \eqref{vectorregression} we obtain
\begin{equation}\label{Ax2}
\begin{split}
\bX_k^{\otimes 2} &= (\bA \bX_{k-1})^{\otimes 2} + ((\mu + M_k)\be_1)^{\otimes 2} + (\bA \bX_{k-1})\otimes((\mu + M_k)\be_1)\\
&\quad+ ((\mu + M_k)\be_1)\otimes(\bA \bX_{k-1}) \\
&= \bA^{\otimes 2} \bX_{k-1}^{\otimes 2} + (\mu + M_k)^2\be_1^{\otimes 2} + (\mu + M_k)(\bA\bX_{k-1})\otimes\be_1\\
&\quad+ (\mu + M_k)\be_1\otimes(\bA\bX_{k-1}),
\end{split}
\end{equation}
where $\otimes$ denotes Kronecker product of matrices.

\subsection{Ergodicity}\label{statmoments}
Under the null hypothesis let us denote by $\tbX$ a random vector with the unique stationary distribution of $(\bX_k)_{k\in \NN}$. Because our process is ergodic, we can apply the ergodic theorem. In its well-known form it states that if $\EE(|g(\tbX)|) < \infty$ for some function $g$, then
\begin{equation}\label{ergodic}
\frac{1}{n}\sum_{k=1}^n g(\bX_k) \as \EE(g(\tbX)).
\end{equation}
This is, for example, Theorem 2 in I.15. in \citeA{Chung_60}. However, instead of the convergence of averages, we will frequently require the convergence of expectations, i.e.,
\begin{equation}\label{fergodic}
\EE(\bX_k^{\otimes \beta}) \to \EE(\tbX^{\otimes \beta}), \qquad \beta\in\NN,
\end{equation}
whenever the right hand side is finite. This is Theorem 14.0.1 in \citeA{MeynTw_09}. The result in \eqref{fergodic} also implies convergence of any component of the matrices. Under the alternative hypothesis we will additionally apply
\begin{equation}\label{vergodic}
\sum_{x \in \NN_0^p} | \PP(\bX_n = x) - \PP(\tbX = x) | \to 0.
\end{equation}
 This result can be found in \citeA[Theorem 4.3]{Orey_71} or in
 \citeA[Theorem 13.1.2]{MeynTw_09}.

The convergence rate in \eqref{fergodic} can be estimated by the following lemma.
\begin{Lem}\label{geom}
There is a constant \ $\pi \in (0, 1)$ \ such that
\[
\|\EE(\bX_k) - \EE(\tbX)\|= \OO(\pi^k),\qquad
\|\EE(\bX_k^{\otimes 2}) - \EE(\tbX^{\otimes 2})\|= \OO(\pi^k).
\]
\end{Lem}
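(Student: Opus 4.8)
The plan is to turn the regression identities \eqref{vectorregression} and \eqref{Ax2} into affine recursions for the first and second moments of $\bX_k$, compare each recursion with its stationary fixed point, and then exploit the stability assumption in the form $\rho(\bA) < 1$, where $\rho(\bA)$ denotes the spectral radius. For the first moments I would take expectations in \eqref{vectorregression}; since $\EE(M_k \mid \cF_{k-1}) = 0$ this gives $\EE(\bX_k) = \bA\,\EE(\bX_{k-1}) + \mu\be_1$, while the stationary mean solves the fixed-point equation $\EE(\tbX) = \bA\,\EE(\tbX) + \mu\be_1$, which has a unique solution because $\bI - \bA$ is invertible (as $\rho(\bA)<1$). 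Subtracting yields the homogeneous recursion $\EE(\bX_k) - \EE(\tbX) = \bA^{k}\bigl(\EE(\bX_0) - \EE(\tbX)\bigr)$. Fixing any $\pi \in (\rho(\bA), 1)$, there is a (sub-multiplicative) norm in which $\bA$ is a strict contraction, equivalently $\|\bA^{k}\| = \OO(\pi^{k})$ by Gelfand's formula; since all norms on a finite-dimensional space are equivalent, this gives $\|\EE(\bX_k) - \EE(\tbX)\| = \OO(\pi^{k})$, the first claim.

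For the second moments I would take conditional expectations in \eqref{Ax2}, using $\EE(M_k \mid \cF_{k-1}) = 0$ together with $\EE(M_k^2 \mid \cF_{k-1}) = \balpha_2^\top \bX_{k-1} + \sigma^2$ (this is exactly the martingale-difference property of the $N_k$ defined in Section~\ref{sec:parest}, with $\balpha_2$ as in \eqref{eq:balpha2defi}), and then take full expectations. This produces an affine recursion $\EE(\bX_k^{\otimes 2}) = \bA^{\otimes 2}\,\EE(\bX_{k-1}^{\otimes 2}) + \bb_k$, where $\bb_k$ is an explicit affine function of $\EE(\bX_{k-1})$. The stationary second moment satisfies the same equation with $\EE(\bX_{k-1})$ replaced by $\EE(\tbX)$ and forcing term $\widetilde{\bb}$, and the fixed point is unique because $\rho(\bA^{\otimes 2}) = \rho(\bA)^2 < 1$. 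Writing $\bd_k := \EE(\bX_k^{\otimes 2}) - \EE(\tbX^{\otimes 2})$ and $\boldsymbol{c}_k := \bb_k - \widetilde{\bb}$, subtraction gives $\bd_k = \bA^{\otimes 2}\bd_{k-1} + \boldsymbol{c}_k$, where $\boldsymbol{c}_k$ depends linearly on $\EE(\bX_{k-1}) - \EE(\tbX)$ and hence $\|\boldsymbol{c}_k\| = \OO(\pi^{k})$ by the first part. Unrolling yields $\bd_k = (\bA^{\otimes 2})^{k}\bd_0 + \sum_{j=1}^{k}(\bA^{\otimes 2})^{k-j}\boldsymbol{c}_j$; bounding the convolution via $\|(\bA^{\otimes 2})^{m}\| = \OO(\pi^{2m})$ and $\sum_{i=0}^{k-1}\pi^{2i}\pi^{k-i} \leq \pi^{k}/(1-\pi)$ gives $\|\bd_k\| = \OO(\pi^{k})$, the second claim. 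The finiteness of $\EE(\bX_0)$, $\EE(\bX_0^{\otimes 2})$, $\EE(\tbX)$ and $\EE(\tbX^{\otimes 2})$ throughout is supplied by the moment hypotheses in condition $\textbf{C}_0$.

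The genuinely important step is the passage from the algebraic condition $\rho(\bA) < 1$ to a geometric operator-norm bound on the powers of $\bA$, and the observation that the second-order recursion is an \emph{inhomogeneous} version of the first whose forcing term $\boldsymbol{c}_k$ inherits geometric decay from the first-order estimate. Everything else---the explicit bookkeeping of $\bb_k$ and the discrete convolution bound---is routine. The only mild subtlety is to select a single $\pi \in (\rho(\bA),1)$ that simultaneously controls $\bA$, $\bA^{\otimes 2}$ (through $\rho(\bA^{\otimes 2}) = \rho(\bA)^2$), and the convolution; the estimate above shows that this can be done with no loss in the exponent.
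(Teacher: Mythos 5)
Your proposal is correct, and it rests on the same foundation as the paper's proof: both derive the one-step affine recursions for $\EE(\bX_k)$ and $\EE(\bX_k^{\otimes 2})$ from \eqref{vectorregression} and \eqref{Ax2} together with $\EE(M_k \mid \cF_{k-1}) = 0$ and $\EE(M_k^2 \mid \cF_{k-1}) = \balpha_2^\top \bX_{k-1} + \sigma^2$, subtract the stationary fixed-point equations, and feed in $\rho(\bA) < 1$ and $\rho(\bA^{\otimes 2}) = \rho(\bA)^2$. Where you genuinely diverge is the finishing step. The paper stacks the two error vectors into a single $(p+p^2)$-dimensional homogeneous recursion whose matrix $\bD$ is block lower triangular with diagonal blocks $\bA$ and $\bA^{\otimes 2}$, observes $\rho(\bD) = \rho(\bA) < 1$, and takes one induced norm with $\vnorm{\bD}_* < 1$, so the geometric rate falls out of a single contraction iteration with $\pi = \vnorm{\bD}_*$. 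You instead keep the recursions separate, treat the second-moment error as an \emph{inhomogeneous} recursion driven by the geometrically decaying first-moment error, and close it by variation of constants with the convolution bound $\sum_{i=0}^{k-1}\pi^{2i}\pi^{k-i} \leq \pi^{k}/(1-\pi)$. Both arguments are sound and give the same conclusion; the paper's block-triangular device is more compact and dispenses with the convolution estimate entirely, while your route is more explicit about the mechanism and makes transparent that \emph{any} $\pi \in (\rho(\bA),1)$ can be achieved with no loss in the exponent, whereas the paper's formulation simply produces some such $\pi$ (the norm of $\bD$), which is all the lemma requires. Incidentally, your reading also clarifies a slip in the paper's wording, which asserts the existence of a norm with $\rho(\bA) < \vnorm{\bA}_* < 1$ where the matrix being controlled must of course be $\bD$, not $\bA$.
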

\begin{proof}
We use \eqref{vectorregression} to conclude that
\[
\EE(\bX_k) = \bA \EE(\bX_{k-1}) + \mu \be_1.
\]
Taking the limits as \ $k \to \infty$ \ we have
\[
\EE(\tbX) = \bA \EE(\tbX) + \mu \be_1,
\]
hence
\begin{equation}\label{recursion}
\EE(\bX_k) - \EE(\tbX) = \bA( \EE(\bX_{k-1}) - \EE(\tbX) ).
\end{equation}
Similarly, from \eqref{Ax2} and \ $\EE(M_k^2|\cF_{k-1}) = \balpha_2^\top \bX_{k-1} + \sigma^2 $ \ we have
\begin{equation}\label{eq:vektorcsere}
\begin{split}
\EE(\bX_k^{\otimes 2}) - \EE(\tbX^{\otimes 2}) 
&= \bA^{\otimes 2} (\EE(\bX_{k-1}^{\otimes 2}) - \EE(\tbX^{\otimes 2})) + \big(\balpha_2^\top ( \EE(\bX_{k-1}) - \EE(\tbX) ) \big) \be_1^{\otimes 2} \\
&\quad + \mu (\bA ( \EE(\bX_{k-1}) - \EE(\tbX) ) ) \otimes \be_1  + \mu\be_1 \otimes (\bA ( \EE(\bX_{k-1}) - \EE(\tbX) ) ) \\
&= \bA^{\otimes 2} (\EE(\bX_{k-1}^{\otimes 2}) - \EE(\tbX^{\otimes 2})) + \be_1^{\otimes 2} \balpha_2^\top ( \EE(\bX_{k-1}) - \EE(\tbX) )\\
&\quad + \mu (\bA \otimes \be_1) ( \EE(\bX_{k-1}) - \EE(\tbX) ) + \mu(\be_1 \otimes \bA) ( \EE(\bX_{k-1}) - \EE(\tbX) ).
\end{split}
\end{equation}
Here we used the fact that for any $c \in \RR$ and real vector $\bv$ we have $c\bv = \bv c$, where the second multiplication is a proper matrix product. Furthermore, we used the following property of the Kronecker product: for any matrices $A,B,C,D$ we have $(AB)\otimes(CD)=(A\otimes C)(B\otimes D)$, specifically, if $C$ is a column vector, $(AB)\otimes C=(AB)\otimes (C \cdot 1)=(A\otimes C)(B\otimes 1)=(A\otimes C)B$ (this identity can also be used when the first factor consists of a single factor instead of the second).
Hence,
\[
\begin{bmatrix}
\EE(\bX_k) \!-\! \EE(\tbX) \\
\EE(\bX_k^{\otimes 2}) \!-\! \EE(\tbX^{\otimes 2})
\end{bmatrix}
\!=\!
\begin{bmatrix}
\bA & \bzero \\
\be_1^{\otimes 2} \balpha_2^\top \!+\! \mu (\bA \otimes \be_1)
 \!+\! \mu(\be_1 \otimes \bA) & \bA^{\otimes 2}
\end{bmatrix}\!\!
\begin{bmatrix}
\EE(\bX_{k-1}) \!-\! \EE(\tbX) \\
\EE(\bX_{k-1}^{\otimes 2}) \!-\! \EE(\tbX^{\otimes 2})
\end{bmatrix}\!.
\]
Let us denote the multiplicating matrix on the right hand side by \ $\bD$. \ We note that \ $\bD$ \ is block lower triangular and that due to the properties of the Kronecker product, \ $\rho(\bA^{\otimes 2}) = (\rho(\bA))^2< \rho(\bA)$ (here and throughout the paper, $\rho$ denotes the spectral radius of a matrix). \ From these it is clear that \ $\rho(\bD) = \rho(\bA) < 1.$ \ It is well-known that then there exists an induced matrix norm \ $\vnorm{\cdot}_*$ \ for which
 \ $\rho(\bA) < \vnorm{\bA}_* <1 $. \ This, and the equivalence of vector norms suffice for the proof. 
\end{proof}

\begin{remark}
The finiteness of the respective moments of the stationary distribution can be derived using the same approach as in the proof of formulae (2.2.3), (2.2.4) and (2.2.10) in \citeA{BarczAsymp_10}. We note that the stationary distribution has exactly as many finite moments as the innovation distribution and the initial distributions have in common, because the Bernoulli distribution is bounded and therefore all of its moments are finite.
\end{remark}

\subsection{The variance of partial sums of the process}

The following lemma will be used for estimations in the next section and shows that while the values of the process are not independent, their dependence is very weak in the sense that the variance of the partial sums up to time $n$ is only linearly increasing with $n$.

\begin{Lem}\label{vari}
In a stable INAR($p$) model under \ $\mathrm{H_0}$
\begin{enumerate}
\item[\textup{(i)}]  $\var(X_1+X_2+\ldots+X_n) = \sum_{i,j=1}^n \cov(X_i, X_j) =  \OO(n),$
\item[\textup{(ii)}] $\var(X_1X_{1-q}+X_2X_{2-q}+\ldots+X_nX_{n-q}) = \sum_{i,j=1}^n \cov(X_iX_{i-q}, X_jX_{j-q}) = \OO(n)$ \ for all \ $0 \leq q \leq p-1.$
\end{enumerate}
\end{Lem}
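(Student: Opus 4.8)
The goal is to show that the variances of two kinds of partial sums grow only linearly in $n$. The key structural fact I would exploit is that in a stable INAR($p$) model the process $(\bX_k)$ is geometrically ergodic, so covariances between $X_i$ and $X_j$ decay geometrically in $|i-j|$. Once I have a bound of the form $|\cov(X_i,X_j)| = \OO(\pi^{|i-j|})$ for some $\pi \in (0,1)$, both claims follow by a standard summation argument: writing $\var(\sum_{k=1}^n X_k) = \sum_{i,j=1}^n \cov(X_i,X_j)$ and grouping terms by the lag $h = |i-j|$ gives a bound $\sum_{h} (n-h)\,\OO(\pi^{h}) \leq n \sum_{h=0}^\infty \OO(\pi^h) = \OO(n)$, since the geometric series converges. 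So the whole lemma reduces to establishing geometric covariance decay.

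\textbf{Step 1: covariance decay for the scalar process.}
First I would establish that $\cov(X_i, X_j)$ decays geometrically in $|i-j|$. The natural route is through the vector recursion \eqref{vectorregression}, $\bX_k = \bA\bX_{k-1} + (\mu + M_k)\be_1$, where $(M_k)$ are martingale differences with respect to $(\cF_k)$. Iterating this recursion expresses $\bX_j$ (for $j > i$) as $\bA^{j-i}\bX_i$ plus a sum of terms involving innovations strictly after time $i$, which are $\cF_i$-orthogonal to $\bX_i$. Conditioning on $\cF_i$ and using $\EE(M_k \mid \cF_{k-1}) = 0$, the cross-covariance between $\bX_i$ and $\bX_j$ is governed by $\bA^{j-i}\cov(\bX_i)$ (up to lower-order terms from the drift, which cancel once I center). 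Because $\rho(\bA) < 1$ in the stable case, Lemma \ref{geom} already gives me an induced norm $\vnorm{\cdot}_*$ with $\rho(\bA) < \vnorm{\bA}_* < 1$, so $\vnorm{\bA^{j-i}}_* = \OO(\pi^{j-i})$ with $\pi := \vnorm{\bA}_*$. This delivers $|\cov(X_i,X_j)| = \OO(\pi^{|i-j|})$, provided the second moments of the stationary distribution are finite---which holds here since $\EE(\vare_1^6) < \infty$ and $\EE(X_0^6) < \infty$, \dots, $\EE(X_{-p+1}^6) < \infty$ by condition $\textbf{C}_0$, guaranteeing uniformly bounded $\var(\bX_i)$ via \eqref{fergodic} and the attendant moment bounds.

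\textbf{Step 2: the product process for part (ii).}
Part (ii) concerns $Y_k := X_k X_{k-q}$, and the same strategy applies but with higher moments. I would bound $|\cov(Y_i, Y_j)|$ geometrically by again iterating the recursion: for $j$ sufficiently larger than $i$, $\bX_j$ and $\bX_{j-q}$ depend on $\bX_i, \bX_{i-q}$ only through powers of $\bA$ applied to $\cF_i$-measurable quantities plus $\cF_i$-orthogonal innovation contributions. The covariance of the products then again factors through a power of $\bA$ and is $\OO(\pi^{|i-j|})$. The crucial point is that $Y_k$ now involves fourth-order quantities in the underlying variables, so I need finiteness of fourth moments of the stationary distribution; this is exactly why condition $\textbf{C}_0$ assumes sixth moments (a comfortable margin). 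With geometric decay of $\cov(Y_i, Y_j)$ in hand, the same lag-grouping summation yields $\OO(n)$.

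\textbf{Main obstacle.}
The routine part is the summation; the delicate part is Step 1/Step 2, making the covariance-decay bound clean and uniform. The subtlety is that $\cov(\bX_i,\bX_j)$ is not simply $\bA^{j-i}\cov(\tbX)$ during the transient phase before stationarity sets in, so I must carry the $\OO(\pi^i)$ correction terms from Lemma \ref{geom} and verify they do not spoil uniformity. Concretely, I would show that $\sup_i \vnorm{\cov(\bX_i)}$ and $\sup_i \vnorm{\cov(Y_i\text{-type quantities})}$ are finite (again using the moment hypotheses and \eqref{fergodic}), so that the constant multiplying $\pi^{|i-j|}$ can be taken independent of $i$ and $j$. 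For the product case the bookkeeping of which innovation terms survive the conditioning is the most error-prone; I would handle it by systematically conditioning on the $\sigma$-field generated up to the smaller of the two time indices and invoking the martingale-difference property of $(M_k)$ together with the orthogonality of innovations at distinct times. Once uniformity is secured, both parts of the lemma are immediate.
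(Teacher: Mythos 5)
Your overall strategy (reduce everything to geometric decay of covariances and then sum over lags) is exactly the paper's strategy, and your Step 1 for part (i) is essentially identical to the paper's argument: the one-step identity $\cov(\bX_i,\bX_j)=\cov(\bX_i,\bX_{j-1})\bA^\top$ obtained by conditioning, iterated to $\cov(\bX_i,\bX_j)=\bA^{(i-j)_+}\var(\bX_{\min(i,j)})(\bA^\top)^{(j-i)_+}$, combined with a norm $\vnorm{\cdot}_*$ satisfying $\vnorm{\bA}_*<1$ and uniform boundedness of $\var(\bX_i)$ from \eqref{fergodic}.

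However, your Step 2 contains a genuine gap. The claim that for the product process ``the covariance of the products again factors through a power of $\bA$'' is not correct as stated, and the reason is the distinctive branching-process feature of INAR: the conditional variance of the martingale differences is state-dependent, $\EE(M_l^2\mid\cF_{l-1})=\balpha_2^\top\bX_{l-1}+\sigma^2$. When you expand $\bX_j^{\otimes 2}$ (of which $X_jX_{j-q}$ is an entry) by iterating the recursion from time $i$, the cross terms $M_l\otimes M_{l'}$ with $l\neq l'$ and the terms linear in $M_l$ are indeed killed by conditioning, but the diagonal terms $M_l^2$, $i<l\leq j$, survive and are \emph{not} orthogonal to $\cF_i$: they contribute terms proportional to $\cov\bigl(\bX_i^{\otimes 2},\,\balpha_2^\top\bX_{l-1}\bigr)$ multiplied by powers of $\bA$. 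So the second-order covariance does not close on itself; it is coupled to the mixed covariances $\cov(\bX_i^{\otimes 2},\bX_{l-1})$, and you need a separate geometric-decay bound for those, plus a summation over $l$, before you can conclude $\vnorm{\cov(\bX_i^{\otimes 2},\bX_j^{\otimes 2})}=\OO(\pi^{|i-j|})$. This is precisely the point the paper handles structurally: it writes the pair $\bigl(\cov(\bX_i^{\otimes 2},\bX_j),\,\cov(\bX_i^{\otimes 2},\bX_j^{\otimes 2})\bigr)$ as a joint linear recursion in $j$ driven by the block lower-triangular matrix $\bD$ (with diagonal blocks $\bA^{\otimes 2}$ and $\bA$), notes $\rho(\bD)=\rho(\bA)<1$, and iterates once for the whole pair. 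Your closing remark about error-prone ``bookkeeping'' gestures at the difficulty but does not supply this missing idea; without it, the decay bound in Step 2 is asserted rather than proved. The fix is not deep—the mixed covariance obeys $\cov(\bX_i^{\otimes 2},\bX_j)=\cov(\bX_i^{\otimes 2},\bX_{j-1})\bA^\top$ exactly as in part (i), and the resulting sum of products of two geometric rates is still geometric—but it must be recognized and carried out, either by the paper's joint $\bD$-recursion or by an explicit two-stage estimate.
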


\begin{proof}
Although the lemma is stated for the process \ $(X_n)_{n \in \NN}$,\ calculations will require that we investigate the process \ $(\bX_n)_{n \in \NN}$. \ Therefore, we will prove the following statements:
\begin{equation}\label{i}
\vnorm{\var(\bX_1+\bX_2+\ldots+\bX_n)} = \OO(n)
\end{equation}
in the place of (i) and
\begin{equation}\label{ii}
\vnorm{\var(\bX_1^{\otimes 2}+\bX_2^{\otimes 2}+\ldots+\bX_n^{\otimes 2})} = \OO(n)
\end{equation}
in the place of (ii).

First we will prove \eqref{i}.
\eqref{fergodic} implies that \ $\left(\vnorm{\var(\bX_i)}\right)_{i \in \NN}$ is a (convergent and hence) bounded series. Let us denote its upper bound by \ $U_1$. \ This observation means that we will only need to investigate the sum \ $\sum_{i,j=1}^n \cov(\bX_i,\bX_j) - \sum_{i=1}^n \cov(\bX_i,\bX_i)$, \ since the latter sum is clearly \ $\OO(n)$. \

First we will use \eqref{vectorregression}. 
It is immediate from \eqref{vectorregression} that
\[
\bX_k - \EE(\bX_k) = \bA (\bX_{k-1} - \EE(\bX_{k-1})) + M_k\be_1.
\]
Let us now fix \ $1 \leq i < j$ \ and write
\begin{equation}\label{covregr}
\begin{split}
&\cov(\bX_i,\bX_j)=\EE\left[(\bX_i-\EE(\bX_i))(\bX_j-\EE(\bX_j))^\top\right]\\ 
&=\EE\left[(\bX_i-\EE(\bX_i))\EE(\bX_j-\EE(\bX_j)|\cF_{j-1})^\top\right] \\
&=	\EE\left[(\bX_i-\EE(\bX_i))(\bA (\bX_{j-1} - \EE(\bX_{j-1})))^\top\right] = \cov(\bX_i,\bX_{j-1})\bA^\top.
\end{split}
\end{equation}
If we perform the calculations for the case \ $1 \leq j < i$ as well, \ we can see that, after \ $|j-i|$ \ iterations,
\begin{equation}\label{covariA}
\cov(\bX_i,\bX_j) = \bA^{(i-j)_+}\var(\bX_{\min(i,j)})(\bA^\top)^{(j-i)_+}.
\end{equation}
Because \ $\rho(\bA) < 1$, \ there exists a matrix norm \ $\vnorm{\cdot}_*$ \ for which $\rho(\bA) <\vnorm{\bA}_*=:\pi < 1$. \ With this norm, \eqref{covariA}, and the boundedness of \ $(\var(\bX_{i}))_{i \in \NN}$ we can establish
\[
\vnorm{\cov(\bX_i,\bX_j)}_* = \OO(\pi^{|i-j|}),
\]
which yields \eqref{i} immediately.

For \eqref{ii} our reasoning will be very similar, although with more tedious calculations. First we note that \eqref{fergodic} implies boundedness for $\vnorm{\var(\bX_i^{\otimes 2})}$ also. From \eqref{Ax2} we have
\begin{equation*}
\begin{split}
\EE(\bX_k^{\otimes 2} - \EE(\bX_k^{\otimes 2})|\cF_{k-1}) &=
	\bA^{\otimes 2}(\bX_{k-1}^{\otimes 2} - \EE(\bX_{k-1}^{\otimes 2})) 
		+ \balpha_2^\top (\bX_{k-1}-\EE(\bX_{k-1}))\be_1^{\otimes 2} \\
	& \quad + \mu[\bA(\bX_{k-1} \!-\! \EE(\bX_{k-1}))]\otimes\be_1 
		+ \mu\be_1\otimes[\bA(\bX_{k-1} \!-\! \EE(\bX_{k-1}))] \\
	&= \bA^{\otimes 2}(\bX_{k-1}^{\otimes 2} - \EE(\bX_{k-1}^{\otimes 2})) 
			+ \be_1^{\otimes 2} \balpha_2^\top (\bX_{k-1}-\EE(\bX_{k-1})) \\
		& \quad + \mu[\bA \otimes \be_1 ](\bX_{k-1} \!-\! \EE(\bX_{k-1}))\otimes\ 
			+ \mu(\be_1\otimes\bA)(\bX_{k-1} \!-\! \EE(\bX_{k-1}))
	,
\end{split}
\end{equation*}
analogously to \eqref{eq:vektorcsere}.
Now, similarly to \eqref{covregr} we get, for \ $1 \leq i < j$, \
\begin{equation*}
\begin{split}
\cov(\bX_{i}^{\otimes 2}, \bX_j^{\otimes 2}) &= \cov(\bX_{i}^{\otimes 2}, \bX_{j-1}^{\otimes 2}) (\bA^{\otimes 2})^\top + \cov(\bX_{i}^{\otimes 2}, \bX_{j-1}) \balpha_2 (\be_1^{\otimes 2})^{\top} \\
& \quad + \mu \cov(\bX_i^{\otimes 2}, \bX_{j-1}) \left(\bA \otimes \be_1\right)^\top + \mu \cov(\bX_i^{\otimes 2}, \bX_{j-1}) \left( \be_1\otimes \bA \right)^\top.
\end{split}
\end{equation*}
Here 
\[
\cov(\bX_i^{\otimes 2}, \bX_{j-1}) := \EE[(\bX_i^{\otimes 2} - \EE(\bX_i^{\otimes 2}))(\bX_{j-1} - \EE(\bX_{j-1}))^\top],
\]
a \ $p^2 \times p$ \ matrix.
Also similarly to \eqref{covregr} we have
\[
\cov(\bX_{i}^{\otimes 2}, \bX_j) = \cov(\bX_i^{\otimes 2}, \bX_{j-1})\bA^\top.
\]
Summarizing, we get the following regression:
\begin{equation}
	\begin{bmatrix}
		\cov(\bX_{i}^{\otimes 2}, \bX_{j})^\top \\
		\cov(\bX_{i}^{\otimes 2}, \bX_{j}^{\otimes 2})^\top
	\end{bmatrix}
=
	\begin{bmatrix}
		\bA^{\otimes 2} & \bzero \\
		\be_1^{\otimes 2}\balpha_2^\top + \mu(\bA \otimes \be_1) + \mu(\be_1 \otimes \bA) & \bA
	\end{bmatrix}
	\begin{bmatrix}
		\cov(\bX_{i}^{\otimes 2}, \bX_{j-1})^\top \\
		\cov(\bX_{i}^{\otimes 2}, \bX_{j-1}^{\otimes 2})^\top
	\end{bmatrix}
.
\end{equation}
Note that the multiplicating matrix on the right hand side is just \ $\bD$ \ from the proof of Lemma \ref{geom}. \ Now, similarly to \eqref{covariA}, we have
\begin{equation}
	\begin{bmatrix}
		\cov(\bX_{i}^{\otimes 2}, \bX_{j})^\top \\
		\cov(\bX_{i}^{\otimes 2}, \bX_{j}^{\otimes 2})^\top
	\end{bmatrix}
=
	\bD^{(j-i)_+}
	\begin{bmatrix}
		\cov(\bX_{\min(i,j)}^{\otimes 2}, \bX_{\min(i,j)})^\top \\
		\cov(\bX_{\min(i,j)}^{\otimes 2}, \bX_{\min(i,j)}^{\otimes 2})^\top
	\end{bmatrix}
	\left(\bD^\top\right)^{(i-j)_+}.
\end{equation}
Now we only need to note that \ $(\cov(\bX_i^{\otimes 2}, \bX_{i}))_{i \in \NN}$ \ is a bounded sequence (due to \eqref{fergodic}), and we can finish the proof of \eqref{ii} in the same way as \eqref{i}.
\end{proof}

\subsection{Strong consistency of the estimates}
In this subsection we want to show with our notations the fact, proven already by \citeA{DuLi_91} for $\btheta$, that the estimates for the coefficients and the mean and variance of the innovation given in Section \ref{sec:parest} are strongly consistent, i.e.,
\begin{equation}
\hbtheta^{(n)} \as \btheta \quad \text{and} \quad \widehat{\sigma}^2_n \as \sigma^2.
\end{equation}

First of all, it is a matter of simple calculations (see \ref{cmoments}) and a straightforward application of \eqref{ergodic} that, provided that the second moment of $\vare_1$ exists,
\begin{equation}\label{eq:M2limit}
\frac{1}{n}\sum_{k=1}^n M_k^2 \to \balpha_2 \EE(\tbX) + \sigma^2,
\end{equation}
with $\balpha_2$ given in \eqref{eq:balpha2defi}.
Hence, taking the limits of the expectations in \eqref{vectorregression} and \eqref{Ax2} we have
\begin{equation*}
\EE(\tbX)=\bA\EE(\tbX)+\mu  \quad \text{and} \quad \EE(\tbX^{\otimes 2})=\bA^{\otimes 2}\EE(\tbX^{\otimes 2}) + (\mu^2 + \balpha_2^{\top} \EE(\tbX) + \sigma^2 ) + \mu (\be_1 \otimes \bA \EE(\tbX) + \bA \EE(\tbX) \otimes \be_1).
\end{equation*}

From this we can deduce, using \eqref{fergodic} and an argument from the proof of Lemma \ref{psiasymp},
\begin{align*}
\hbtheta^{(n)} &=
\left(\frac{\bQ_n}{n}\right)^{-1}
\frac{1}{n}
\sum_{k=1}^n
      X_k
      \begin{bmatrix}
       \bX_{k-1} \\
       1
      \end{bmatrix}
\\
&\to
\EE
\left(\begin{bmatrix}
\tbX \\
1
\end{bmatrix}
\begin{bmatrix}
\tbX \\
1
\end{bmatrix}
^\top
\right)^{-1}
\EE
\left(
(\balpha^\top \tbX + \mu)
\begin{bmatrix}
\tbX \\
1
\end{bmatrix}
\right)
=
\EE
\left(\begin{bmatrix}
\tbX \\
1
\end{bmatrix}
\begin{bmatrix}
\tbX \\
1
\end{bmatrix}
^\top
\right)^{-1}
\EE
\left(\begin{bmatrix}
\tbX \\
1
\end{bmatrix}
\begin{bmatrix}
\tbX \\
1
\end{bmatrix}
^\top
\right)
\btheta = \btheta.
\end{align*}

A similar result can be derived for the estimate of $\sigma^2$. By recalling \eqref{eq:M2limit} and computing the strong limit of the other summands in \eqref{eq:barsigma}, we obtain the strong consistency of $\overline{\sigma}^2_n$ immediately. The same reasoning shows that if the second moment of the stationary distribution is finite (in this case we already know that $\hbtheta^{(n)}$ is a consistent estimator), then the limits of the estimators $\overline{\sigma}^2_n$ and $\widehat{\sigma}^2_n$ are the same almost surely; hence, the strong consistency of $\widehat{\sigma}^2_n$ is established.

\subsection{Proof of Theorem \ref{theta}}

We recall that the theorem was stated under the null hypothesis, so we will assume it to hold. Let

 \begin{align*}
  \bcZ_n(t) := \frac{1}{\sqrt{n}}\sum_{k=1}^\nt \bZ_k ,
    \quad t \in [0,1] , \qquad
    \bZ_k := M_k
           \begin{bmatrix}
            \bX_{k-1} \\
            1 
           \end{bmatrix} , \qquad k = 1, 2, \dots
 \end{align*}

The proof will be based on the following theorem.
 
\begin{Thm}\label{MCLT}
Under the assumptions of Theorem \ref{theta}
 \begin{align*}
  \bcZ_n \distr \bI^{1/2} \, \bcW, \qquad n \to \infty,
 \end{align*}
 where \ $(\bcW(t))_{0 \leq t \leq 1}$ \ is a $(p+1)$-dimensional standard Wiener
 process and
 \begin{equation}\label{Imtxdefi}
 \bI:=\EE\left(
 (\balpha^\top_2 \tbX + \sigma^2)
 \mx{\tbX}\mx{\tbX}^\top
 \right).
 \end{equation}
 
\end{Thm}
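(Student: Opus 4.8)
The plan is to establish a functional martingale central limit theorem for the partial-sum process $\bcZ_n$. The key structural observation is that $(\bZ_k)_{k \in \NN}$ is a sequence of martingale differences with respect to the natural filtration $(\cF_k)$, since $\EE(M_k \mid \cF_{k-1}) = 0$ by \eqref{Mdefi} and $\bX_{k-1}$ is $\cF_{k-1}$-measurable. Thus $\bcZ_n(t) = n^{-1/2}\sum_{k=1}^{\nt} \bZ_k$ is a (normalized) martingale, and I would invoke a multidimensional martingale functional CLT (e.g.\ the version in Jacod--Shiryaev or the classical result for martingale arrays). To apply it, I must verify two conditions: convergence of the predictable quadratic variation and a conditional Lindeberg (negligibility) condition.

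First I would compute the conditional covariance of the increments. A direct calculation gives
\begin{align*}
 \EE\bigl[ \bZ_k \bZ_k^\top \mid \cF_{k-1} \bigr]
 = \EE(M_k^2 \mid \cF_{k-1}) \, \mx{\bX_{k-1}}\mx{\bX_{k-1}}^\top
 = (\balpha_2^\top \bX_{k-1} + \sigma^2) \, \mx{\bX_{k-1}}\mx{\bX_{k-1}}^\top,
\end{align*}
using the identity $\EE(M_k^2 \mid \cF_{k-1}) = \balpha_2^\top \bX_{k-1} + \sigma^2$ already recorded in the excerpt. Summing these, dividing by $n$, and applying the ergodic theorem \eqref{ergodic} to the function $g(\bx) = (\balpha_2^\top \bx + \sigma^2)\mx{\bx}\mx{\bx}^\top$ (whose entries are polynomials of degree at most three in the coordinates, hence integrable under $\textbf{C}_0$ since $\EE(\vare_1^6)<\infty$ forces six stationary moments) yields
\begin{align*}
 \frac{1}{n}\sum_{k=1}^{\nt} \EE\bigl[\bZ_k\bZ_k^\top \mid \cF_{k-1}\bigr]
 \stoch t \, \bI
 \qquad\text{for each fixed } t\in[0,1],
\end{align*}
with $\bI$ as in \eqref{Imtxdefi}. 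This gives the predictable quadratic variation the required limit $t\,\bI$, which is continuous and deterministic; the limiting Gaussian process is therefore $\bI^{1/2}\bcW$.

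Second I would check the conditional Lindeberg condition, namely that for every $\delta>0$, $n^{-1}\sum_{k=1}^n \EE\bigl[\vnorm{\bZ_k}^2 \bbone_{\{\vnorm{\bZ_k} > \delta\sqrt n\}} \mid \cF_{k-1}\bigr] \stoch 0$. The clean way to obtain this is through a Lyapunov-type bound: since $\textbf{C}_0$ guarantees $\EE(\vare_1^6)<\infty$ and hence finite sixth stationary moments, one has $\sup_k \EE\vnorm{\bZ_k}^{2+\epsilon} < \infty$ for a suitable $\epsilon>0$ (in fact $n^{-1}\sum_{k=1}^n \EE\vnorm{\bZ_k}^{2+\epsilon}$ is bounded), from which the Lindeberg condition follows by a standard truncation argument. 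The main obstacle is a bookkeeping one rather than a conceptual one: verifying that the relevant moments are finite and that the ergodic averages converge not only almost surely but in a way compatible with the functional (as opposed to finite-dimensional) statement. For the upgrade from finite-dimensional convergence to convergence in $D([0,1])$, tightness is automatic once the quadratic-variation limit is continuous and the Lindeberg condition holds, since these are exactly the hypotheses of the functional martingale CLT. I would therefore assemble these verifications and cite the functional martingale CLT to conclude $\bcZ_n \distr \bI^{1/2}\bcW$.
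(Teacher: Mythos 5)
Your proposal is correct and follows essentially the same route as the paper: both treat $n^{-1/2}\bZ_k$ as martingale differences, obtain the quadratic-variation limit $t\,\bI$ from the identity $\EE(M_k^2 \mid \cF_{k-1}) = \balpha_2^\top \bX_{k-1} + \sigma^2$ together with the ergodic theorem \eqref{ergodic}, and verify a conditional Lindeberg condition before invoking the functional martingale CLT. The only (immaterial) difference is in the Lindeberg step, where the paper uses the explicit $\epsilon = 2$ Lyapunov bound with \emph{conditional} fourth moments --- yielding a degree-six polynomial $P(\bX_{k-1})$ whose ergodic average converges almost surely, so that $\delta^{-2}n^{-2}\sum_{k=1}^{\nt} P(\bX_{k-1}) \as 0$ --- whereas you pass to unconditional $(2+\epsilon)$-moments and a truncation/Markov argument; both hinge on the same sixth-moment assumption in condition $\textbf{C}_0$.
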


\begin{proof}
By \eqref{ergodic} we have $n^{-1} \bI_n \as \bI$, and since \ $\widehat{\sigma}^{2}_n$ \ and \ $\hbtheta_n$ \ are strongly consistent estimators, therefore we have \ $ n^{-1}\hbI_n \as \bI$ as well.
We will use the martingale central limit theorem for the martingale differences \ $\frac{1}{\sqrt{n}} \bZ_k, \ n \in \NN, \ k=1,\,2,\,\ldots,\,n$. To compute the variance function, we write
\begin{align*}
\frac{1}{n}\sum_{k=1}^\nt\EE(\bZ_k \bZ_k^\top | \cF_{k-1}) &= \frac{\nt}{n} \frac{1}{\nt}\sum_{k=1}^\nt \EE(M_k^2|\cF_{k-1})
\begin{bmatrix}
\bX_{k-1}\\
1
\end{bmatrix}
\begin{bmatrix}
\bX_{k-1}\\
1
\end{bmatrix}
^\top
\\
&
=
\frac{\nt}{n} \frac{1}{\nt}\sum_{k=1}^\nt (\balpha_2^\top \bX_{k-1}+\sigma^2)
\begin{bmatrix}
\bX_{k-1}\\
1
\end{bmatrix}
\begin{bmatrix}
\bX_{k-1}\\
1
\end{bmatrix}
^\top
\\
& \as
t \EE \left(
(\balpha_2^\top \tbX+\sigma^2)
\begin{bmatrix}
\tbX\\
1
\end{bmatrix}
\begin{bmatrix}
\tbX\\
1
\end{bmatrix}
^\top
\right)
= t \bI .
\end{align*}
It remains to check the so-called conditional Lindeberg condition:
\begin{align*}
\sum_{k=1}^\nt
\EE
\left(\left.
\vnorm{\frac{1}{\sqrt{n}}\bZ_k}^2 \chi _{\left\{\vnorm{n^{-1/2}\bZ_k} > \delta\right\} } \right\rvert \cF_{k-1}
\right)
&\leq
\frac{1}{\delta^2}
\sum_{k=1}^\nt
\EE
\left(\left.
\vnorm{\frac{1}{\sqrt{n}} \bZ_k}^4
\right\rvert
\cF_{k-1}
\right)
\\
&=
\frac{1}{\delta^2n^2}
\sum_{k=1}^{\nt}
\EE\!
\left(
M_k^4|\cF_{k-1}
\right)\!\!
\left(X_{k-1}^2 \!+\! \ldots \!+\! X_{k-p}^2\!+\!1\right)^2
\\
&=
\frac{1}{\delta^2n^2}
\sum_{k=1}^{\nt}
P(\bX_{k-1}),
\end{align*}
where \ $P$ \ is a polynomial of degree six, because \ $\EE(M_k^4|\cF_{k-1})$ \ is a second-degree polynomial of $\bX_{k-1}$ (this is is detailed in \ref{cmoments}). The sixth moment of the stationary distribution is finite due to the assumptions, hence \eqref{ergodic} implies
\[
\frac{1}{\nt}
\sum_{k=1}^{\nt}
P(\bX_{k-1})
\as
\EE(P(\tbX)) < \infty .
\]
This means
\[
\frac{1}{\delta^2n^2}
\sum_{k=1}^{\nt}
P(\bX_{k-1})
\as 0 ,
\]
implying Lindeberg's condition. All the conditions of the martingale central limit theorem have been checked; the proof is therefore complete.
\end{proof}

Having proved Theorem \ref{MCLT}, we now turn to the proof of Theorem \ref{theta}. For this, let us introduce the notation
\begin{align*}
  \hbZ_k^{(n)} := \hM_k^{(n)}
           \begin{bmatrix}
            \bX_{k-1} \\
            1 
           \end{bmatrix} , \qquad k = 1, 2, \dots
 \end{align*}
First we note that
\[
\sum_{k=1}^\nt \hbZ_k^{(n)} = \sum_{k=1}^\nt \bZ_k + \sum_{k=1}^\nt (\hbZ_k^{(n)} - \bZ_k) = \sum_{k=1}^\nt \bZ_k + \sum_{k=1}^\nt (\hM_k^{(n)} - M_k)
\begin{bmatrix}
\bX_{k-1} \\
1
\end{bmatrix}.
\]
Recalling the definitions of $M_k$ and $\hM_k^{(n)}$,
\begin{align*}
\hM_k^{(n)} - M_k &= \left(X_k - \hbtheta_n^\top
\begin{bmatrix}
\bX_{k-1} \\
1
\end{bmatrix}
\right)
-
\left(
X_k - \btheta^\top
\begin{bmatrix}
\bX_{k-1} \\
1
\end{bmatrix}
\right) = \left(
\btheta - \hbtheta_n \right)^\top
\begin{bmatrix}
\bX_{k-1} \\
1
\end{bmatrix}.
\end{align*}
We have
\begin{equation}\label{CLSerror}
\begin{split}
\hbtheta_n - \btheta &=
	\bQ_n^{-1}
	\left(
		\sum_{k=1}^n
		X_k
		\begin{bmatrix}
			\bX_{k-1} \\
			1
		\end{bmatrix}
	\right)
	- \btheta	\\
&=
	\bQ_n^{-1}
	\left(
		\sum_{k=1}^n
		X_k
		\begin{bmatrix}
			\bX_{k-1} \\
			1
		\end{bmatrix}
		-
		\sum_{k=1}^n
		\begin{bmatrix}
			\bX_{k-1} \\
			1
		\end{bmatrix}
		\begin{bmatrix}
			\bX_{k-1} \\
			1
		\end{bmatrix}
		^\top
		\btheta^\top
	\right) \\
&=
	\bQ_n^{-1}
	\left(
		\sum_{k=1}^n
		M_k
		\begin{bmatrix}
			\bX_{k-1} \\
			1
		\end{bmatrix}
	\right),
\end{split}
\end{equation}
hence
\begin{equation}\label{CLSapprox}
\hbI_n^{-1/2} \sum_{k=1}^\nt \hbZ_k^{(n)} = 
\sqrt{n}\,\hbI_n^{-1/2} \left(
	\sum_{k=1}^\nt \frac{1}{\sqrt{n}}\bZ_k - \bQ_\nt \bQ_n^{-1} \sum_{k=1}^n \frac{1}{\sqrt{n}} \bZ_k
\right).
\end{equation}
In the next step we notice that according to \eqref{ergodic} we have
\begin{align*}
\bQ_\nt \bQ_n^{-1} = \frac{\nt}{n}\left( \frac{1}{\nt}\bQ_\nt\right) \left(
\frac{1}{n}\bQ_n
\right)^{-1} \as t \tbQ_{\mathrm{H_0}} \tbQ_{\mathrm{H_0}}^{-1} = t \bE_{p+1} \qquad \forall t \in [0,1],
\end{align*}
where $\bE_{p+1}$ is the $p+1$-dimensional identity matrix and
\[
\tbQ_{\mathrm{H_0}}:= \EE\left(\mx{\tbX}\mx{\tbX}^\top\right).
\]
Now we apply \eqref{CLSapprox}, Theorem \ref{MCLT}, and 
\[
\sqrt{n}\,\hbI_n^{-1/2} \as \bI^{-1/2} ,
\]
to conclude that
\[
\left(\hbI_n^{-1/2} \sum_{k=1}^\nt \hbZ_k^{(n)}\right)_{t \in [0,1]} \distr (\bcW(t) - t \bcW(1))_{t \in [0,1]}. \qed
\]

\section{The process under the alternative hypothesis}\label{app_B}
While in Theorem \ref{theta} we were able to consider longer and longer samples taken from the same process, this approach has to be modified for the alternative hypothesis. More precisely, we have to consider a series of time-inhomogeneous INAR($p$) processes, where the $n$-th one has a point of change at $\nrho$ (we will suppress this in the notation for simplicity). Now, the parts of these processes before the change (i.e., $(X_i)_{i=1}^{\nrho}$) can be handled as a sample taken from an infinite INAR($p$) process (at least in distribution), but this is not true for the second part (i.e., $(X_i)_{i \geq \nrho + 1}$), because the initial distribution of this process depends on $n$.

Therefore, for a rigorous analysis we need to refine the results of \ref{statmoments}.

\subsection{Ergodicity under the alternative hypothesis}
We have
\begin{equation}\label{aergodic}
\frac{1}{n - \nrho}\sum_{k=\nrho+1}^n g(\bX_k) \stoch \EE(g(\tbX'')),
\end{equation}
where \ $g:\NN_0^p \to \RR$ \ with \ $\EE(|g(\tbX)|) < \infty$. 
\ Indeed, for an arbitrary \ $\vare>0$ 
\begin{align*}
 &\PP\left( \left| \frac{1}{n - \nrho}\sum_{k=\nrho+1}^n g(\bX_k)
                   - \EE(g(\tbX'')) \right| > \vare \right) \\
 &= \sum_{x \in \NN_0^p}
     \PP\left( \left| \frac{1}{n - \nrho}\sum_{k=\nrho+1}^n g(\bX_k)
                      - \EE(g(\tbX''))\right| > \vare
               \, \Bigg| \, \bX_{\nrho}=x \right)
     \PP(\bX_{\nrho}=x) ,
\end{align*}
and
\[
  \PP\left( \left| \frac{1}{n - \nrho}\sum_{k=\nrho+1}^n g(\bX_k)
                   - \EE(g(\tbX''))\right| > \vare
            \, \Bigg| \, \bX_{\nrho}=x \right) \to 0
\]
by the ergodic theorem for each \ $x \in \NN_0^p$, \ additionally 
\[
\PP(\bX_{\nrho}=x) \leq |\PP(\bX_{\nrho}=x)-\PP(\tbX = x)| + \PP(\tbX = x),
\]
and one can use \eqref{vergodic}. We will also apply that for all \ $\vare > 0$ \ there exists \ $\nu$ \ such
 that
\begin{equation}\label{afergodic}
 \|\EE(\bX_{\nrho + k}) - \EE(\tbX'')\| < \vare \qquad
 \text{for all \ $n \geq \nu$ \ and all \ $k \geq \nu$.}
\end{equation}
First observe that there exists \ $\pi'' \in (0,1)$ \ such that
 \ $\|\EE(\bX_{\nrho+k}) - \EE(\tbX'')\|
    \leq (\pi'')^k \|\EE(\bX_{\nrho}) - \EE(\tbX'')\|$
 \ for all \ $k \in \NN$, \ see \eqref{recursion}.
Next, for all \ $\eta > 0$, \ choose \ $\nu_1$ \ and \ $\nu_2$ \ such that
 \ $(\pi'')^k < \eta$ \ for all \ $k \geq \nu_1$ \ and
 \ $\|\EE(\bX_{\nrho}) - \EE(\tbX')\| < \eta$ \ for all \ $n \geq \nu_2$.
\ Hence
\[
  \|\EE(\bX_{\nrho + k}) - \EE(\tbX'')\|
  \leq \eta \bigl( \|\EE(\bX_{\nrho}) - \EE(\tbX')\|
                   + \| \EE(\tbX') - \EE(\tbX'')\| \bigr)
  \leq \eta^2 + \eta \| \EE(\tbX') - \EE(\tbX'')\| .
\]

\subsection{Behaviour of the estimates under the alternative hypothesis}\label{altests}

In this subsection we will investigate the weak limit of the CLS estimates under the conditions of Theorem \ref{HA_mu_1}. We will also show how the quantity \ $\psi$ \ arises naturally if we would like to know how much the misestimation of the parameters influences our test process.
\begin{Lem}\label{psiasymp}
Under the assumptions of Theorem \ref{HA_mu_1} we have
\[\hbtheta_n \stoch \tbtheta := \begin{bmatrix} \tbalpha \\ \tmu \end{bmatrix} := \tbQ^{-1}
               \left( \rho
                      \bC'
       \begin{bmatrix}
        \balpha \\   
        \mu'
       \end{bmatrix}
                     + (1-\rho)
                        \bC''
       \begin{bmatrix}
        \balpha \\   
        \mu''
       \end{bmatrix} \right).
       \]
\end{Lem}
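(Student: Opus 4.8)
The plan is to write the estimator in the form used in \ref{altests}, namely
\[
\hbtheta_n=\left(\frac{\bQ_n}{n}\right)^{-1}\frac1n\sum_{k=1}^n X_k\mxk ,
\]
and to show that the normalising matrix $\bQ_n/n$ and the vector $\frac1n\sum_{k=1}^n X_k\mxk$ converge in probability separately, after which the conclusion follows by continuity of matrix inversion. The organising idea is to split every sum at the change point $\nrho$ into a \emph{before} block ($1\le k\le\nrho$) and an \emph{after} block ($\nrho+1\le k\le n$). The before block is a genuine Cesàro average of a time-homogeneous ergodic chain with parameters $(\balpha,\mu')$, so the ordinary ergodic theorem \eqref{ergodic} applies; the after block is the delicate one, since its initial state $\bX_{\nrho}$ depends on $n$ and only the refined convergence \eqref{aergodic} is available.

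For the normalising matrix I would apply this to $g(\bx)=\mx{\bx}\mx{\bx}^\top$ (entrywise), whose entries are quadratic, so the required expectations are finite under the sixth-moment assumption. The before block gives
\[
\frac1n\sum_{k=1}^{\nrho}\mxk\mxk^\top
=\frac{\nrho}{n}\cdot\frac{1}{\nrho}\sum_{k=1}^{\nrho}\mxk\mxk^\top\longrightarrow\rho\,\bC'\quad\text{a.s.,}
\]
since $\nrho/n\to\rho$ and the inner average tends to $\bC'=\EE(\mx{\tbX'}\mx{\tbX'}^\top)$ by \eqref{ergodic}. For the after block I would re-index $\sum_{k=\nrho+1}^n\mxk\mxk^\top=\sum_{j=\nrho}^{n-1}\mx{\bX_j}\mx{\bX_j}^\top$, discard the two boundary terms $\mx{\bX_{\nrho}}\mx{\bX_{\nrho}}^\top$ and $\mx{\bX_n}\mx{\bX_n}^\top$ (each $\OO_\PP(1)$, hence negligible after division by $n-\nrho$), and invoke \eqref{aergodic} to obtain $\frac{1}{n-\nrho}\sum_{k=\nrho+1}^n\mxk\mxk^\top\stoch\bC''$; multiplying by $(n-\nrho)/n\to1-\rho$ yields the contribution $(1-\rho)\bC''$. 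Hence $\bQ_n/n\stoch\rho\bC'+(1-\rho)\bC''=\tbQ$.

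For the numerator I would substitute the regression identity from \eqref{vectorregression}, namely $X_k=\balpha^\top\bX_{k-1}+\mu_k+M_k$ with $\mu_k=\mu'$ for $k\le\nrho$ and $\mu_k=\mu''$ afterwards, giving
\begin{align*}
\frac1n\sum_{k=1}^n X_k\mxk
&=\frac1n\sum_{k=1}^{\nrho}\mxk\mxk^\top\begin{bmatrix}\balpha\\\mu'\end{bmatrix}
+\frac1n\sum_{k=\nrho+1}^{n}\mxk\mxk^\top\begin{bmatrix}\balpha\\\mu''\end{bmatrix}\\
&\quad+\frac1n\sum_{k=1}^n M_k\mxk .
\end{align*}
The first two sums converge in probability to $\rho\,\bC'\begin{bmatrix}\balpha\\\mu'\end{bmatrix}$ and $(1-\rho)\,\bC''\begin{bmatrix}\balpha\\\mu''\end{bmatrix}$ exactly as above. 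The martingale remainder vanishes: $(M_k\mxk)_k$ are martingale differences with respect to $(\cF_k)$, so the cross terms are orthogonal and, using $\EE(M_k^2\mid\cF_{k-1})=\balpha_2^\top\bX_{k-1}+\sigma^2_k$ with $\balpha_2$ from \eqref{eq:balpha2defi},
\[
\EE\vnorm{\sum_{k=1}^n M_k\mxk}^2
=\sum_{k=1}^n\EE\bigl(M_k^2\vnorm{\mxk}^2\bigr)
=\sum_{k=1}^n\EE\bigl((\balpha_2^\top\bX_{k-1}+\sigma^2_k)\vnorm{\mxk}^2\bigr)
=\OO(n),
\]
because the third moments of $\bX_{k-1}$ are bounded uniformly in $k$ and $n$ in both regimes (the moment sequences converge, cf.\ \eqref{fergodic} and Lemma \ref{geom}); dividing by $n^2$ and applying Chebyshev's inequality shows $\frac1n\sum_{k=1}^n M_k\mxk\stoch\bzero$.

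Finally I would combine the two limits. As $\tbQ$ is a convex combination of the positive definite matrices $\bC'$ and $\bC''$ it is invertible, so matrix inversion is continuous there, and the continuous mapping theorem together with Slutsky's lemma gives
\[
\hbtheta_n\stoch\tbQ^{-1}\left(\rho\,\bC'\begin{bmatrix}\balpha\\\mu'\end{bmatrix}+(1-\rho)\,\bC''\begin{bmatrix}\balpha\\\mu''\end{bmatrix}\right)=\tbtheta,
\]
as claimed. The main obstacle is the after-change block: unlike the before-change part it is not a homogeneous ergodic average, so the ordinary ergodic theorem fails and one must rely on \eqref{aergodic}, whose validity rests in turn on the total-variation control \eqref{vergodic} of the law of $\bX_{\nrho}$ uniformly in $n$; the attendant index-shift and boundary-term bookkeeping, though routine, is where care is needed.
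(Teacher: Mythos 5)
Your proposal is correct, and the first half of it (the convergence $\bQ_n/n\stoch\tbQ$ via \eqref{ergodic} before the change and \eqref{aergodic} after it) coincides with the paper's argument; but your treatment of the numerator $\frac1n\sum_{k=1}^n X_k\mxk$ is genuinely different. The paper introduces the extended chain $\bU_k$ with first component $X_k$ stacked on top of $\bX_{k-1}$, argues that this chain is itself ergodic, identifies its stationary law $\tbU$, and applies \eqref{ergodic} and \eqref{aergodic} directly to the function $\bu\mapsto u^{(1)}(u^{(2)},\dots,u^{(p+1)},1)^\top$; the limit $\bC'(\balpha^\top,\mu')^\top$ then comes out by taking the conditional expectation of $\tU^{(1)}$ given the remaining components. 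You instead substitute the regression identity $X_k=\balpha^\top\bX_{k-1}+\mu_k+M_k$ (with $\mu_k\in\{\mu',\mu''\}$ by regime), which reduces the numerator to the two blocks of $\bQ_n$ already analyzed, multiplied by fixed parameter vectors, plus the martingale remainder $\frac1n\sum_{k=1}^n M_k\mxk$, which you kill by orthogonality of martingale differences and Chebyshev. Your route is more economical: it requires no new ergodic chain and no identification of its stationary distribution, only the limits already proved for $\bQ_n/n$; it is in fact the same decomposition the paper itself deploys later, in the proof of Lemma \ref{altestconv}, to obtain the sharper $\OO_{\PP}(n^{-1/2})$ rate, so your lemma becomes a corollary of machinery the paper needs anyway. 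The price is your variance bound: the claim $\EE\vnorm{\sum_{k=1}^n M_k\mxk}^2=\OO(n)$ needs $\EE(\vnorm{\bX_{k-1}}^3)$ bounded uniformly in $k$ \emph{and} $n$, and in the post-change regime the segment starts from $\bX_{\nrho}$, whose law depends on $n$, so the results you cite (\eqref{fergodic} and Lemma \ref{geom}, both stated for a homogeneous chain with fixed initial law) do not literally cover it. This is not a fatal gap: the moment recursion of Lemma \ref{geom}, extended to third Kronecker powers, contracts geometrically from the starting moments $\EE(\bX_{\nrho}^{\otimes j})$, which are bounded in $n$ by the pre-change convergence --- exactly the argument the paper carries out for first moments in \eqref{afergodic} --- but it is the one point where the inhomogeneity genuinely enters your estimate, and it deserves an explicit sentence rather than a citation of results that do not apply verbatim.
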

\begin{proof}
By \eqref{ergodic} and \eqref{aergodic} we obtain
 \begin{align*}
  \frac{1}{n} \bQ_{n}
   = \frac{1}{n}
      \bQ_{\nrho}
      + \frac{1}{n}
        \sum_{k = \lfloor n \rho \rfloor + 1}^n
         \begin{bmatrix}
          \bX_{k-1} \\
          1
         \end{bmatrix}
         \begin{bmatrix}
          \bX_{k-1} \\
          1
         \end{bmatrix}^\top
   \stoch \rho \bC'
        + (1-\rho) \bC''
    = \tbQ,
 \end{align*}
 as \ $n \to \infty$. Moreover, we can notice that for a homogeneous model the process
 \[
  \bU_k:=
  \begin{bmatrix}
  X_k \\
  \bX_{k-1}
  \end{bmatrix}, \quad k \in \NN \]
  satisfies a similar recursion to \eqref{eq:stochregression}.
  The equivalent of the matrix $\bA$ can then be shown to have a spectral radius smaller than 1, and by the same reasoning as in \ref{regreq} we can conclude that $(\bU_k)_{k \in \NN}$ is ergodic, and we can apply \eqref{ergodic}.
  Moreover, it is clear that if $\tbU$ denotes a vector with the unique stationary distribution of $(\bU_k)_{k \in \NN}$ then
  \[
	\begin{bmatrix}
	\tU^{(2)} \\
	\tU^{(3)} \\
	\vdots \\
	\tU^{(p+1)}
	\end{bmatrix}
	\distre \tbX
  \]
  and for the components of $\tbU$ we also have
  \[
	\tU^{(1)} \distre \sum_{j=1}^{\tU^{(2)}} \xi_{1, j} + \cdots
	        + \sum_{j=1}^{\tU^{(p+1)}} \xi_{p, j} + \vare,
  \]
  where $\xi_{i,j} \distre \xi_{i,1,1}, i=1, \ldots, p, \ j \in \NN$ and $\vare \distre \vare_1$ such that all these variables are totally independent and also independent of $(\tU^{(2)}, \ldots, \tU^{(p+1)})^\top$. Hence \eqref{ergodic} implies.
 \begin{align*}
  \frac{1}{n}
  \sum_{k=1}^{\lfloor n \rho \rfloor}
   X_k
   \begin{bmatrix}
    \bX_{k-1} \\
    1
   \end{bmatrix}
  &\stoch \rho \EE\left( \left( \sum_{j=1}^{\tX'_0} \xi_{1,j} + \cdots
                             + \sum_{j=1}^{\tX'_{-p+1}} \xi_{p,j} + \vare' \right)
                      \begin{bmatrix}
                       \tbX' \\
                       1
                      \end{bmatrix} \right) \\
  &= \rho \EE\left( \left( \alpha_1 \tX'_0 + \cdots + \alpha_p \tX'_{-p+1}
                           + \mu' \right)
                    \begin{bmatrix}
                     \tbX' \\
                     1
                    \end{bmatrix} \right) 
   = \rho \bC'
     \begin{bmatrix}
      \balpha \\   
      \mu'
     \end{bmatrix}
 \end{align*}
 as \ $n \to \infty$ (here $\vare' \distre \vare_1$).
\ In a similar way, using \eqref{aergodic}
 \[
   \frac{1}{n}
   \sum_{k = \lfloor n \rho \rfloor + 1}^n
    X_k
    \begin{bmatrix}
     \bX_{k-1} \\
     1
    \end{bmatrix}
   \stoch (1-\rho) \bC''
       \begin{bmatrix}
        \balpha \\   
        \mu''
       \end{bmatrix}. \qedhere
 \]
\end{proof}
 
 For the rate of convergence we have the following result:
 
 \begin{Lem}\label{altestconv}
 Under the conditions of Theorem \ref{HA_mu_1} we have
 \[
 \hbtheta_n - \tbtheta = \OO_{\PP}(n^{-1/2}).
 \]
 \end{Lem}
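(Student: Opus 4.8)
The plan is to start from the exact identity
\[
\hbtheta_n - \tbtheta
= \left(\frac{1}{n}\bQ_n\right)^{-1}
  \left[
    \frac{1}{n}\sum_{k=1}^n X_k\begin{bmatrix}\bX_{k-1}\\1\end{bmatrix}
    - \frac{1}{n}\bQ_n\,\tbtheta
  \right]
\]
and to prove that the bracketed vector is $\OO_{\PP}(n^{-1/2})$. The proof of Lemma \ref{psiasymp} already gives $\frac1n\bQ_n \stoch \tbQ$ with $\tbQ$ invertible, so $(\frac1n\bQ_n)^{-1}=\OO_{\PP}(1)$ and the stated rate follows by multiplication. By the very definition of $\tbtheta$ we have $\tbQ\,\tbtheta = \rho\bC'\begin{bmatrix}\balpha\\\mu'\end{bmatrix} + (1-\rho)\bC''\begin{bmatrix}\balpha\\\mu''\end{bmatrix}$, which is exactly the limit of $\frac1n\sum_{k=1}^n X_k\begin{bmatrix}\bX_{k-1}\\1\end{bmatrix}$ found in Lemma \ref{psiasymp}; hence both terms in the bracket share the same limit, and it suffices to establish the rate $\OO_{\PP}(n^{-1/2})$ separately for $\frac1n\sum_{k=1}^n X_k\begin{bmatrix}\bX_{k-1}\\1\end{bmatrix}$ and for $\frac1n\bQ_n$ against their respective limits. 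Each entry of these objects has the form $\frac1n\sum_{k=1}^n h(\bU_k)-\ell$, where $\bU_k:=\begin{bmatrix}X_k\\\bX_{k-1}\end{bmatrix}$ is the lifted process from the proof of Lemma \ref{psiasymp}, $h$ is a monomial of degree at most two in its coordinates, and $\ell$ is the corresponding limit. For each such scalar sum I would bound its mean and variance separately and conclude with Chebyshev's inequality.

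For the mean I would split the sum at the change point $\nrho$. On each block the process is a homogeneous stable INAR($p$) process, so the geometric ergodic-rate estimate of Lemma \ref{geom} (applied to $\bU_k$, which is again ergodic with spectral radius $<1$, as noted in the proof of Lemma \ref{psiasymp}), together with its post-change analogue, gives $\sum_{k=1}^{\nrho}\EE(h(\bU_k)) = \nrho\,\EE(h(\tbU')) + \OO(1)$ and $\sum_{k=\nrho+1}^{n}\EE(h(\bU_k)) = (n-\nrho)\,\EE(h(\tbU'')) + \OO(1)$, where $\tbU'$, $\tbU''$ denote the stationary vectors before and after the change. The post-change estimate is legitimate because $\|\EE(\bX_{\nrho})-\EE(\tbX'')\|$ and the corresponding second moments stay bounded in $n$, cf.\ the argument around \eqref{afergodic}. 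Dividing by $n$ and using $\nrho/n=\rho+\OO(n^{-1})$ shows that the mean of each sum equals its target limit $\ell$ up to an $\OO(n^{-1})$ error.

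For the variance, the key is that $\var\!\big(\sum_{k=1}^n h(\bU_k)\big)=\OO(n)$, hence $\var\!\big(\frac1n\sum_{k=1}^n h(\bU_k)\big)=\OO(n^{-1})$. This is the extension of Lemma \ref{vari} to the alternative: splitting at $\nrho$, the two within-block variances are $\OO(n)$ by Lemma \ref{vari} applied to each homogeneous block, while the cross-block covariance is controlled because $\cov(h(\bU_i),h(\bU_j))$ still decays geometrically in $|i-j|$ across the change point — the regression \eqref{vectorregression}, with $\bA$ replaced by the pre- and post-change matrices (both of spectral radius $<1$), reproduces the bound of \eqref{covariA}, so the double sum of covariances is again $\OO(n)$. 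Combining the two estimates, Chebyshev's inequality yields $\frac1n\sum_{k=1}^n h(\bU_k)-\ell = \OO(n^{-1})+\OO_{\PP}(n^{-1/2})=\OO_{\PP}(n^{-1/2})$ for every entry, so the whole bracket is $\OO_{\PP}(n^{-1/2})$ and the proof is complete.

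The hard part is the variance bound, i.e.\ extending Lemma \ref{vari} to the non-homogeneous setting: unlike under $\mathrm{H_0}$, the post-change block is not stationary (its initial law depends on $n$), and there are genuine covariances between the two blocks. Both difficulties are absorbed by the uniform geometric decay of covariances, which persists because the autoregressive structure \eqref{vectorregression} holds throughout with spectral radius strictly below $1$ on each side of the change; the boundedness of the individual variances $\var(\bU_i)$ and $\var(\bU_i^{\otimes 2})$ needed to start the estimate follows from the finiteness and convergence of the sixth moments guaranteed by the hypotheses of Theorem \ref{HA_mu_1}.
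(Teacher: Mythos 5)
Your proof is correct, but it takes a genuinely different route from the paper's. The paper starts from essentially the same exact identity (its \eqref{diffdecomp}), but then uses the regression $X_k=\balpha^\top\bX_{k-1}+\mu'+M_k'$ to split each block's sum into a \emph{martingale} part $\sum_k M_k'\mx{\bX_{k-1}}$, which is $\OO_{\PP}(n^{1/2})$ by the already-proved functional CLT (Theorem \ref{MCLT}, with a triangular-array remark for the post-change block), plus a purely quadratic part $\bigl(\bQ_{\nrho}-\rho\bQ_n\tbQ^{-1}\bC'\bigr)\bigl[\begin{smallmatrix}\balpha\\ \mu'\end{smallmatrix}\bigr]$, which is then handled exactly as you handle everything: centering at expectations, Lemma \ref{vari} plus Markov's inequality for the fluctuations, Lemma \ref{geom} for the bias. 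You skip the martingale extraction entirely and instead bound the second moment of the raw sums $\frac1n\sum_k X_k\mx{\bX_{k-1}}$ and $\frac1n\bQ_n$ directly, via geometric decay of covariances in the lifted chain $\bU_k$, then finish with Chebyshev. What the paper's route buys is that the $X_k$-weighted sums come for free from orthogonality of martingale increments (no covariance-decay argument needed for them), so Lemma \ref{vari} is invoked only for the $\bQ$-matrices; what your route buys is that it is purely a second-moment/Chebyshev argument, with no appeal to the CLT, and---importantly---it is more careful on a point the paper glosses over: the paper cites Lemma \ref{vari}, which is stated and proved under $\mathrm{H_0}$, to bound $\var(\bQ_n)$ even though $\bQ_n$ spans the change point under $\mathrm{H_A}$. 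Your explicit treatment of the cross-block covariances (the conditional-mean recursion \eqref{vectorregression} holds on both sides of $\nrho$ with spectral radius below one, and products of powers of the pre- and post-change recursion matrices still decay geometrically, starting from moments that are bounded uniformly in $n$ by the \eqref{afergodic}-type argument) is precisely the missing justification, so your version is, if anything, the more complete one.
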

 
 \begin{proof}
 The difference can be decomposed in the following way:
 \begin{equation}\label{diffdecomp}
 \begin{split}
 n^{1/2}(\hbtheta_n - \tbtheta) 
 	= (n^{-1}\bQ_n)^{-1}
 	&n^{-1/2}\left[
 		\sum_{k=1}^{\nrho}X_k \mx{\bX_{k-1}} - \bQ_n\tbQ^{-1}
 		\left(
 			\rho C'
 				\begin{bmatrix}
 				\balpha' \\
 				\mu'
 				\end{bmatrix}
 		\right)
 	\right. \\
 	&\quad \left. +
 		\sum_{k=\nrho+1}^{n}X_k \mx{\bX_{k-1}} - \bQ_n\tbQ^{-1}
 		\left(
 			(1-\rho) C''
 				\begin{bmatrix}
 				\balpha'' \\
 				\mu''
 				\end{bmatrix}
 		\right)
 	\right].
 \end{split}
 \end{equation}
 The first factor converges to \ $\tbQ^{-1}$ \ stochastically, and will therefore be omitted from further calculations. The second factor has been split in two and only the first part will be analyzed in detail. The analysis of the second part is completely analogous. We split the first part in the second factor in the following way:
 \begin{align*}
 	n^{-1/2}&\left[\sum_{k=1}^{\nrho}X_k \mx{\bX_{k-1}} - \bQ_n\tbQ^{-1}
 		\left(
 			\rho \bC'
 				\begin{bmatrix}
 				\balpha' \\
 				\mu'
 				\end{bmatrix}
 		\right) \right] \\
 	&= n^{-1/2} \left(
 		\sum_{k=1}^{\nrho} M_k' \mx{\bX_{k-1}}
 		\right) 
 	+ n^{-1/2}\left(
 		\bQ_{\nrho} 
 		- \rho \bQ_n \tbQ^{-1} \bC'
 		\right)
 		\begin{bmatrix}
 			\balpha \\
 			\mu'
 		\end{bmatrix}.
 \end{align*}
 The first term is
 \[
 n^{-1/2} \left(
 		\sum_{k=1}^{\nrho} \bZ'_k
 		\right),
 \]
 which is asymptotically normal, and therefore \ $\OO_{\PP}(1)$ \ according to Lemma \ref{MCLT} (the same reasoning applies after the change, since Lindeberg's theorem is valid for triangular arrays as well).  We will decompose \ $n^{-1/2}\left(\bQ_{\nrho} - \rho \bQ_n \tbQ^{-1} \bC' \right)$ \  in the following way:
 \begin{align*}
 	n^{-1/2}\left(\bQ_{\nrho} - \rho \bQ_n \tbQ^{-1} \bC' \right) 
 	&= n^{-1/2}\left( \bQ_{\nrho} - \EE(\bQ_{\nrho}) \right) 
 		+ n^{-1/2} \left[ \EE(\bQ_{\nrho}) - \nrho \bC'\right] \\
 	& \quad - n^{-1/2}\{\rho[\bQ_n - \EE(\bQ_n)]\tbQ^{-1}\bC'\} -
 	  n^{-1/2}\{\rho[\EE(\bQ_n) - n\tbQ]\tbQ^{-1}\bC'\}\\ 
 	&\quad - n^{-1/2}\{\rho n \bC' - \nrho \bC' \}.
 \end{align*}
 The last term is deterministic and $\oo(1)$. We know from \eqref{ii} that the variances of the first and third terms are bounded. Denoting the common upper bound by $K$ we have, from Markov's inequality, for all $n$,
 \[
 \PP\left(n^{-1}\vnorm{\bQ_{\nrho} - \EE(\bQ_{\nrho})}^2 > a \right) < \frac{K}{a} \to 0 \text{ as } a \to \infty,
 \]
 and similarly for the third term. Consequently, the first and third terms are \ $\OO_{\PP}(1)$. Recalling Lemma \ref{geom} we have
 \begin{align*}
 \vnorm{\EE(\bQ_{\nrho}) - \nrho \bC'} &= \vnorm{\sum_{k=1}^{\nrho}\left(\EE \mxk \mxk^{\top} - \bC'\right)} \\
 & \leq \sum_{k=1}^{\nrho}\vnorm{\EE \mxk \mxk^{\top} - \bC'} \leq \sum_{k=1}^{\nrho} \pi^k = \OO(1),
 \end{align*}
 because the matrices within the sum consist entirely of the entries of $\bX_k^{\otimes 2} - \tbX^{\otimes 2}$. A similar calculation is valid for the fourth term. This implies the boundedness of the second and fourth terms.
 \end{proof}

\subsection{Proof of Theorem \ref{HA_mu_1}}

Now we turn to the proof of our main results: Theorem \ref{HA_mu_1} and Theorem \ref{HA_mu_2}. We will use the following notations:
\[
M'_k: = X_k - \balpha^\top \bX_{k-1} - \mu', \qquad \bZ'_k: = M_k' \mx{\bX_{k-1}},
\]
and similarly for \ $M''_k$ \ and \ $\bZ''_k$. \
The proof will be given for the process before \ $\nrho$ in detail. \ The analysis of the process after \ $\nrho$ \ can be handled analogously. In the proof we will rely repeatedly on ideas from \citeA{HPS_07}.

The first step is the following decomposition for $k < \nrho$:
\begin{equation}\label{decomp}
\begin{split}
\hM_k^{(n)} & = M_k' + \left[(\balpha - \tbalpha)^\top \EE(\bX_{k-1}) + \left(\mu' - \tmu\right)\right] \\
& \quad + (\balpha - \hbalpha_n)^\top (\bX_{k-1} - \EE(\bX_{k-1})) \\
& \quad + \left[(\tbalpha - \hbalpha_n)^\top \EE(\bX_{k-1}) + \left(\tmu - \hmu_n\right)\right].
\end{split}
\end{equation}
For \ $k \geq \nrho$, the quantities \ $M_k'$ \ and \ $\mu'$ \ have to be replaced with \ $M_k''$ \ and \ $\mu''$, respectively.
Based on \eqref{decomp} we have:
\begin{equation}\label{eq:max_ineq}
\begin{split}
  \left|\max_{1 \leq k \leq n} \left( \sum_{i=1}^k \hM_i^{(n)} - n\psi \right)\right|
  &\leq \max_{1 \leq k \leq n} \left| \sum_{i=1}^{\nrho \wedge k} M_i' + \sum_{i=\nrho+1}^{\nrho\vee k} M_i'' \right| \\
  &\quad + \left|\max_{1 \leq k \leq n} \left[ \sum_{i=1}^{\nrho \wedge k} \left[(\balpha - \tbalpha)^\top \EE (\bX_{i-1}) + (\mu'-\tmu)\right] \right.\right. \\
  &\qquad \left.\left. + \sum_{i=\nrho+1}^{\nrho\vee k} \left[(\balpha - \tbalpha)^\top \EE (\bX_{i-1}) + (\mu''-\tmu)  -n \psi \right]\right]\right| \\
  &\quad + \max_{1 \leq k \leq n} \left| \sum_{i=1}^k (\balpha - \hbalpha_{n})^\top \left( \bX_{i-1} - \EE(\bX_{i-1}) \right)  \right| \\
  &\quad + \max_{1 \leq k \leq n} \left| \sum_{i=1}^k \left[ (\tbalpha - \hbalpha_{n} )^\top \EE(\bX_{k-1})  + (\tmu - \hmu_n )\right] \right|.
\end{split}
\end{equation}
The second term in \eqref{eq:max_ineq} is deterministic. For its more detailed analysis we need the following lemma.

\begin{Lem}\label{lem:asymperror}
Under the assumptions of Theorem \ref{HA_mu_1} and using the notations from there we have
\[ (\btheta' - \tbtheta)^\top
\begin{bmatrix}
\EE (\tbX') \\
1
\end{bmatrix}
= \frac{\psi}{\rho} > 0, \qquad (\btheta'' - \tbtheta)^\top \begin{bmatrix}
\EE (\tbX'') \\
1
\end{bmatrix}
= -\frac{\psi}{1-\rho} < 0 \]
with
\[
\btheta' := \begin{bmatrix}
\balpha \\
\mu'
\end{bmatrix},
\qquad
\btheta'' := \begin{bmatrix}
\balpha \\
\mu''
\end{bmatrix}
.
\]
\end{Lem}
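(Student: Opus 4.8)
The plan is to collapse both identities onto a single scalar and then read off the signs from the positivity of $\psi$. The starting observation is that the last column of a second-moment matrix reproduces the mean vector: since $\bC' = \EE\bigl(\mx{\tbX'}\mx{\tbX'}^\top\bigr)$, right-multiplication by $\be_{p+1}$ isolates the final column and uses $\mx{\tbX'}^\top\be_{p+1}=1$, giving $\bC'\be_{p+1} = \mx{\EE(\tbX')}$, and likewise $\bC''\be_{p+1} = \mx{\EE(\tbX'')}$. Thus the very vectors appearing in the lemma are the images of $\be_{p+1}$ under $\bC'$ and $\bC''$, which is what makes the defining formula for $\psi$ reappear. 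Next I would rewrite $\tbtheta$ using $\tbQ = \rho\bC' + (1-\rho)\bC''$: regrouping $\rho\bC'\btheta' + (1-\rho)\bC''\btheta'' = \tbQ\btheta' + (1-\rho)\bC''(\btheta''-\btheta')$ and applying $\tbQ^{-1}$ yields $\btheta' - \tbtheta = (1-\rho)\tbQ^{-1}\bC''(\btheta'-\btheta'')$, while the symmetric regrouping around $\btheta''$ gives $\btheta''-\tbtheta = -\rho\,\tbQ^{-1}\bC'(\btheta'-\btheta'')$.

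The second step exploits that the two parameter vectors differ only in the innovation mean, so $\btheta'-\btheta'' = (\mu'-\mu'')\be_{p+1}$. Substituting, $\btheta'-\tbtheta = (1-\rho)(\mu'-\mu'')\tbQ^{-1}\bC''\be_{p+1}$, and forming the inner product with $\bC'\be_{p+1}=\mx{\EE(\tbX')}$, then transposing and using that $\bC''$ and $\tbQ^{-1}$ are symmetric, gives
\[
(\btheta'-\tbtheta)^\top\mx{\EE(\tbX')} = (1-\rho)(\mu'-\mu'')\,\be_{p+1}^\top\bC''\tbQ^{-1}\bC'\be_{p+1} = \frac{\psi}{\rho}.
\]
The second identity follows identically from $\btheta''-\tbtheta = -\rho(\mu'-\mu'')\tbQ^{-1}\bC'\be_{p+1}$, noting that the resulting scalar $\be_{p+1}^\top\bC'\tbQ^{-1}\bC''\be_{p+1}$ equals its own transpose $\be_{p+1}^\top\bC''\tbQ^{-1}\bC'\be_{p+1}$ by symmetry, and so equals $-\psi/(1-\rho)$.

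The main obstacle is the strict sign, i.e.\ showing $b := \be_{p+1}^\top\bC''\tbQ^{-1}\bC'\be_{p+1} > 0$; this is a bilinear form in $\tbQ^{-1}$ evaluated at two distinct vectors, so positive definiteness of $\tbQ^{-1}$ by itself is not enough. I would compute $b$ in block form. Partitioning $\bC'$, $\bC''$, $\tbQ$ into the first $p$ coordinates and the trailing $1$, and writing $\bSigma' = \cov(\tbX')$, $\bSigma'' = \cov(\tbX'')$ and $\bd = \EE(\tbX')-\EE(\tbX'')$, the Schur complement of the bottom-right entry of $\tbQ$ is $\overline{\bSigma} = \bigl(\rho\bSigma'+(1-\rho)\bSigma''\bigr) + \rho(1-\rho)\bd\bd^\top$, where the rank-one term comes from the within/between covariance identity. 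Pushing $\bC'\be_{p+1}$ and $\bC''\be_{p+1}$ through the block inverse of $\tbQ$ collapses $b$ to $1 - \rho(1-\rho)\,\bd^\top\overline{\bSigma}^{-1}\bd$, and a Sherman--Morrison simplification using the displayed form of $\overline{\bSigma}$ turns this into
\[
b = \frac{1}{1 + \rho(1-\rho)\,\bd^\top\bigl(\rho\bSigma'+(1-\rho)\bSigma''\bigr)^{-1}\bd} > 0,
\]
which is manifestly positive; here I rely on $\rho\bSigma'+(1-\rho)\bSigma''$ being positive definite, which holds because the stationary distributions are nondegenerate under condition $\textbf{C}_\text{A}$ (the same nondegeneracy that guarantees $\tbQ^{-1}$ exists). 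With $b>0$, $\mu'>\mu''$ and $\rho\in(0,1)$ we obtain $\psi>0$, and the two signed statements are then immediate from the identities above.
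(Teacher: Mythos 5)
Your proposal is correct, and the algebraic core (the representation $\btheta'-\tbtheta=(1-\rho)(\mu'-\mu'')\tbQ^{-1}\bC''\be_{p+1}$, its mirror image for $\btheta''$, and the observation that $\bC'\be_{p+1}$, $\bC''\be_{p+1}$ reproduce the augmented mean vectors) is exactly the paper's argument. Where you genuinely diverge is the positivity of $b:=\be_{p+1}^\top\bC''\tbQ^{-1}\bC'\be_{p+1}$. The paper disposes of this in one line by asserting that $\bC'$, $\bC''$ and $\tbQ^{-1}$ are positive definite; as you correctly point out, that is not by itself a valid inference, since a bilinear form $x^\top ABC x$ built from three positive definite matrices need not be positive. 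The paper's conclusion can nevertheless be rescued with one identity that exploits the special structure $\tbQ=\rho\bC'+(1-\rho)\bC''$, namely
\begin{equation*}
\bC'\tbQ^{-1}\bC''
=\bigl[(\bC'')^{-1}\tbQ(\bC')^{-1}\bigr]^{-1}
=\bigl[\rho(\bC'')^{-1}+(1-\rho)(\bC')^{-1}\bigr]^{-1},
\end{equation*}
which exhibits $\bC'\tbQ^{-1}\bC''$ as the inverse of a convex combination of positive definite matrices, hence symmetric positive definite, so $b>0$ follows at once (and this identity also gives the symmetry $\bC'\tbQ^{-1}\bC''=\bC''\tbQ^{-1}\bC'$ that you obtained by transposing the scalar). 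Your Schur-complement route is longer but buys more: the block inversion of $\tbQ$ together with Sherman--Morrison yields the closed form
\begin{equation*}
b=\frac{1}{1+\rho(1-\rho)\,\bd^\top\bigl(\rho\bSigma'+(1-\rho)\bSigma''\bigr)^{-1}\bd},
\qquad \bd=\EE(\tbX')-\EE(\tbX''),
\end{equation*}
which not only proves $b>0$ but shows $b\in(0,1]$ and quantifies how the separation of the two stationary means attenuates $\psi$; I verified the intermediate steps ($\overline{\bSigma}=\rho\bSigma'+(1-\rho)\bSigma''+\rho(1-\rho)\bd\bd^\top$ and $b=1-\rho(1-\rho)\bd^\top\overline{\bSigma}^{-1}\bd$) and they are correct. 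One small point of bookkeeping: your argument needs $\rho\bSigma'+(1-\rho)\bSigma''$ to be positive definite; this follows from the paper's Appendix C corollary that $\bC'$ and $\bC''$ are positive definite, because $\bSigma'$ and $\bSigma''$ are precisely the Schur complements of the bottom-right entries of $\bC'$ and $\bC''$, so their positive definiteness is equivalent to that of $\bC'$ and $\bC''$ rather than an additional assumption.
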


\begin{proof}
We consider
 \begin{equation*}
  \begin{bmatrix}
   \balpha - \tbalpha \\
   \mu' - \tmu
  \end{bmatrix}
  = (1-\rho) \tbQ^{-1}  
               \bC''
    \begin{bmatrix}
     \bzero \\
     \mu' - \mu''
    \end{bmatrix}.
 \end{equation*}
We can write
 \[
   \begin{bmatrix}
    \EE(\tbX') \\
    1
   \end{bmatrix}
   = \bC'
    \begin{bmatrix}
     \bzero \\
     1
    \end{bmatrix},
 \]
 whence the first equality in the statement is immediate. The second equality can be proven by the same reasoning. Now we need to show that $\psi>0.$ Indeed,
 \[
 \frac{\psi}{\rho} = (1-\rho) (\mu'-\mu'')
      \be_{p+1}^\top
      \bC' \tbQ^{-1} \bC''
      \be_{p+1},
 \]
 and \ $\bC', \ \bC''$ \ and \ $\tbQ^{-1}$ \ are positive definite matrices. The first two are covariance matrices and because at least one of the offspring and innovation distributions is nondegenerate, they will be positive definite (see \ref{sec:invert}). The matrix $\tbQ^{-1}$ is then the inverse of a convex combination of positive definite matrices, hence positive definite itself.
\end{proof}

Now the second term in \eqref{eq:max_ineq} can be rewritten as
\begin{align*}
&\left|\max_{1 \leq k \leq n} \left[ (\btheta' - \tbtheta)^\top \sum_{i=1}^{\nrho \wedge k} \left[\left(  \EE \mx{\bX_{i-1}} - \EE \mx{\tbX'} \right) \right] + \frac{(\nrho \wedge k)  \psi}{\rho} \right.\right.\\ &\qquad+
\left.\left.(\btheta'' - \tbtheta)^\top \sum_{i=\nrho+1}^{\nrho\vee k} \left[\left(  \EE \mx{\bX_{i-1}} - \EE \mx{\tbX''} \right)\right] - \frac{(k - \nrho)^+ \psi}{1-\rho} -n \psi \right]\right| \\
& \leq \max_{1 \leq k \leq n} \left| (\btheta' - \tbtheta)^\top \sum_{i=1}^{\nrho \wedge k} \left[\left(  \EE \mx{\bX_{i-1}} - \EE \mx{\tbX'} \right) \right]\right| \\
&\qquad + 
\max_{1 \leq k \leq n} \left| (\btheta'' - \tbtheta)^\top \sum_{i=\nrho+1}^{\nrho\vee k} \left[\left(  \EE \mx{\bX_{i-1}} - \EE \mx{\tbX''} \right)\right]\right| \\
& \qquad+ \left|\max_{1 \leq k \leq \nrho} \left(\frac{(\nrho \wedge k)}{\rho} - \frac{(k - \nrho)^+}{1-\rho} - n\right)\psi \right| .
\end{align*}
Because of Lemma \ref{geom}, the first two maxima are bounded, and the third one is obviously attained at $k = \nrho$ with value $\left(\frac{\nrho}{\rho}-n\right)\psi$, which is also bounded. Therefore the second term in \eqref{eq:max_ineq} is \ $\OO(1)$ \ as \ $n \to \infty$.
We can use Lemma \ref{altestconv} to show that the fourth term in \eqref{eq:max_ineq} is $\OO_{\PP}(\sqrt{n}),$ but for the first and third terms we also need another result. This can be found as Theorem 3.1. in \citeA{KoLe_98}. 

 \begin{Lem} \label{kole}
Let \ $(Y_n)_{n\in\NN}$ \ be a sequence of random variables with finite second moments, and let \ $(c_n)_{n \in \NN}$ \ be a sequence of nonnegative constants. Then, for any \ $a > 0$,
 \begin{align*}
  a^2 \PP \left( \max_{1 \leq k \leq n} c_k \left| \sum_{j=1}^k Y_j \right| 
					> a \right)
  &\leq \sum_{k=1}^{n-1} |c^2_{k+1} - c^2_k| \sum_{i,j=1}^k \EE(Y_iY_j) \\
  &\quad +2 \sum_{k=1}^{n-1} c_{k+1}^2 \left(\EE\left( Y^2_{k+1} \right) \sum_{i,j=1}^k \EE\left(Y_iY_j\right)\right)^{1/2} \\
  &\quad +2 \sum_{k=0}^{n-1} c_{k+1}^2 \EE(Y_{k+1}^2).
 \end{align*}
\end{Lem}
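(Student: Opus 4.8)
The plan is to prove this Hájek--Rényi-type maximal inequality by reducing the probability to the expectation of a telescoping sum, keeping every covariance bundled inside the quantities $A_k := \sum_{i,j=1}^k \EE(Y_iY_j) = \EE(S_k^2)$, where $S_k := \sum_{j=1}^k Y_j$; the point is that no independence or martingale structure is needed, only finiteness of the second moments. First I would set $S_0 := 0$, $c_0 := 0$ and $R_k := c_k^2 S_k^2$ with $R_0 := 0$. Since $c_k \geq 0$ and $a > 0$, squaring gives $\{\max_{1\leq k\leq n} c_k|S_k| > a\} = \{\max_{1\leq k\leq n} R_k > a^2\}$, so Markov's inequality yields $a^2\,\PP(\max_{1\leq k\leq n} c_k|S_k|>a) \leq \EE(\max_{1\leq k\leq n} R_k)$, and it suffices to estimate $\EE(\max_{1\leq k\leq n} R_k)$.

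Next I would pass to increments. For each fixed $k$ one has $R_k = \sum_{j=1}^k (R_j - R_{j-1}) \leq \sum_{j=1}^n (R_j - R_{j-1})^+$, and taking the maximum over $k$ gives the pointwise bound $\max_{1\leq k\leq n} R_k \leq \sum_{j=1}^n (R_j - R_{j-1})^+$, hence $\EE(\max_{1\leq k\leq n} R_k) \leq \sum_{j=1}^n \EE\big((R_j - R_{j-1})^+\big)$. A single increment expands, using $S_j^2 - S_{j-1}^2 = 2 S_{j-1}Y_j + Y_j^2$, as
\[
 R_j - R_{j-1} = (c_j^2 - c_{j-1}^2)\,S_{j-1}^2 + 2 c_j^2\, S_{j-1}Y_j + c_j^2\,Y_j^2 .
\]
Applying $(x+y+z)^+ \leq |x| + |y| + z^+$ together with $S_{j-1}^2 \geq 0$ and $Y_j^2 \geq 0$ gives
\[
 (R_j - R_{j-1})^+ \leq |c_j^2 - c_{j-1}^2|\,S_{j-1}^2 + 2 c_j^2\,|S_{j-1}|\,|Y_j| + c_j^2\,Y_j^2 ,
\]
and taking expectations, with Cauchy--Schwarz on the cross term $\EE(|S_{j-1}||Y_j|) \leq (A_{j-1}\,\EE(Y_j^2))^{1/2}$, I would arrive at
\[
 \EE\big((R_j - R_{j-1})^+\big) \leq |c_j^2 - c_{j-1}^2|\,A_{j-1} + 2 c_j^2\,\big(A_{j-1}\,\EE(Y_j^2)\big)^{1/2} + c_j^2\,\EE(Y_j^2).
\]

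Summing over $j = 1, \dots, n$ and reindexing by $k = j-1$ completes the proof: the convention $c_0 = 0$, $S_0 = 0$ makes $A_0 = 0$, so the $k=0$ contributions of the first two sums vanish, reproducing exactly $\sum_{k=1}^{n-1}|c_{k+1}^2 - c_k^2|A_k$ and $2\sum_{k=1}^{n-1} c_{k+1}^2\,(A_k\,\EE(Y_{k+1}^2))^{1/2}$, while the last sum becomes $\sum_{k=0}^{n-1} c_{k+1}^2\,\EE(Y_{k+1}^2)$. This is in fact the stated bound with the constant on the final sum improved from $2$ to $1$, so the inequality of the lemma follows a fortiori. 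The only genuinely delicate point is the cross term $2 c_j^2 S_{j-1}Y_j$: its positive part must be dominated by $2 c_j^2 |S_{j-1}||Y_j|$ and then handled by Cauchy--Schwarz, which is precisely where the possible dependence between $Y_j$ and $S_{j-1}$ is absorbed into $A_{j-1}$ rather than requiring orthogonality; everything else is routine telescoping and bookkeeping of the boundary conventions.
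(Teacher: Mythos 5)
Your proof is correct. One important point of comparison: the paper does not actually prove this lemma at all --- it is quoted without proof as Theorem~3.1 of Kokoszka and Leipus (1998), so there is no internal argument to match against, and a self-contained proof like yours is genuinely additional content. Your route (square the event using $c_k \geq 0$ and $a > 0$, apply Markov's inequality to $\max_{1 \leq k \leq n} R_k$ with $R_k = c_k^2 S_k^2$, bound the maximum pointwise by the telescoping sum $\sum_{j=1}^n (R_j - R_{j-1})^+$, expand each increment, and absorb the cross term $2c_j^2 S_{j-1} Y_j$ via Cauchy--Schwarz into $A_{j-1} = \EE(S_{j-1}^2) = \sum_{i,j'=1}^{j-1}\EE(Y_i Y_{j'})$) is the standard and essentially the original way such H\'ajek--R\'enyi-type inequalities for arbitrary dependent sequences are proved; every step checks out, including the elementary inequality $(x+y+z)^+ \leq |x| + |y| + z$ for $z \geq 0$, the integrability of all terms under finite second moments, and the boundary bookkeeping ($A_0 = 0$ kills the $k=0$ terms of the first two sums, so the choice of $c_0$ is immaterial). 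Your final bound carries coefficient $1$ rather than $2$ on the sum $\sum_{k=0}^{n-1} c_{k+1}^2 \EE(Y_{k+1}^2)$; since every summand there is nonnegative, your inequality is marginally sharper than the stated one, which therefore follows a fortiori.
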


Lemma \ref{kole} can be applied to the fourth term in the following way to show that it is $\OO_{\PP}(n^{1-\gamma})$:

\begin{Lem}\label{kgamma}
For a time-homogeneous INAR($p$) process satisfying condition
$\textup{\textbf{C}}_0$ and any $\gamma < \frac{1}{4}$ we have
\[
\max_{1 \leq k \leq n} k^{\gamma-1}\vnorm{\sum_{i=1}^k (\bX_{k-1} - \EE(\bX_{k-1}))} = \OO_{\PP}(1).
\]
\end{Lem}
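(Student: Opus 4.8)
The plan is to reduce to a scalar statement and then invoke the maximal inequality of Lemma \ref{kole} with a carefully chosen weight sequence. Since all norms on $\RR^p$ are equivalent and the Euclidean norm of a vector is dominated by the sum of the absolute values of its coordinates, it suffices to prove that for each fixed component $m \in \{1,\dots,p\}$,
\[
\max_{1 \leq k \leq n} k^{\gamma-1} \Bigl| \sum_{i=1}^k \bigl( \bX_{i-1}^{(m)} - \EE(\bX_{i-1}^{(m)}) \bigr) \Bigr| = \OO_\PP(1).
\]
So I would fix $m$, set $Y_j := \bX_{j-1}^{(m)} - \EE(\bX_{j-1}^{(m)})$ and $c_k := k^{\gamma-1}$, and apply Lemma \ref{kole} to these quantities.

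The two ingredients feeding the inequality are supplied by the earlier results. From \eqref{i} in Lemma \ref{vari} we obtain, for every $k$,
\[
\sum_{i,j=1}^k \EE(Y_i Y_j) = \var\Bigl( \sum_{j=1}^k \bX_{j-1}^{(m)} \Bigr) = \OO(k)
\]
(the same $\OO(k)$ bound holds for any block of $k$ consecutive indices, by the geometric covariance decay established in the proof of Lemma \ref{vari}), while \eqref{fergodic} guarantees that the per-step variances $\EE(Y_{k+1}^2) = \var(\bX_k^{(m)})$ are uniformly bounded. For the weights, $c_k^2 = k^{2\gamma-2}$ and, by the mean value theorem, $|c_{k+1}^2 - c_k^2| = \OO(k^{2\gamma-3})$.

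Next I would substitute these bounds into the three sums on the right-hand side of Lemma \ref{kole}. The first sum becomes $\sum_{k} \OO(k^{2\gamma-3})\,\OO(k) = \sum_k \OO(k^{2\gamma-2})$, which converges since $2\gamma - 2 < -1$; the third sum is $\sum_k \OO(k^{2\gamma-2})\,\OO(1)$, convergent for the same reason. The decisive term is the middle one,
\[
2\sum_{k=1}^{n-1} c_{k+1}^2 \Bigl(\EE(Y_{k+1}^2)\sum_{i,j=1}^k\EE(Y_iY_j)\Bigr)^{1/2} = \sum_k \OO(k^{2\gamma-2})\,\OO(k^{1/2}) = \sum_k \OO(k^{2\gamma - 3/2}),
\]
which converges exactly when $2\gamma - 3/2 < -1$, i.e. precisely when $\gamma < \tfrac14$. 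Thus each of the three sums is bounded by a constant independent of $n$, so $a^2 \PP\bigl(\max_{1\le k\le n} c_k |\sum_{j=1}^k Y_j| > a\bigr) \le C$ with $C$ not depending on $n$; letting $a \to \infty$ gives the scalar $\OO_\PP(1)$ conclusion uniformly in $n$, and summing over the $p$ coordinates finishes the proof.

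The main obstacle is essentially the bookkeeping of the exponents: one must verify that it is the sharp $\OO(k)$ variance growth of Lemma \ref{vari}, rather than the trivial $\OO(k^2)$ bound, that makes the cross term summable, and that the threshold $\gamma < \tfrac14$ is forced by this middle term and not by the other two. Everything else—the equivalence of norms, the boundedness of the per-step variances via \eqref{fergodic}, and the telescoping estimate for $|c_{k+1}^2 - c_k^2|$—is routine.
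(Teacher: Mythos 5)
Your proof is correct and follows essentially the same route as the paper's: both reduce to components via norm equivalence, apply Lemma \ref{kole} with $c_k = k^{\gamma-1}$ and $Y_j$ the centered component values, feed in the $\OO(k)$ partial-sum variance bound from Lemma \ref{vari} and the uniform per-step variance bound from \eqref{fergodic}, and identify the middle term $\sum_k \OO(k^{2\gamma-3/2})$ as the one forcing $\gamma < \tfrac14$. The only cosmetic difference is indexing: the paper writes the components as lagged scalars $Y_{i,q} = X_{i-1-q} - \EE(X_{i-1-q})$ and handles the shifted double sum explicitly, which is exactly the point you cover with your parenthetical remark about blocks of consecutive indices.
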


\begin{proof}
We will follow the proof of Lemma 4.2 in \citeA{HPS_07} and apply Lemma \ref{kole}
with \ $c_k = k^{\gamma-1}$ \ and \ $Y_{i,q} = X_{i-1-q} - \EE(X_{i-1-q})$ \ for \ $0 \leq q \leq p-1$
to show that the result holds for each component of the vectors.
This implies convergence of the 1-norm, and because of the equivalence of vector norms,
it is sufficient for the proof of the statement.
We have
\[
\left| \frac{1}{(k+1)^{2-2\gamma}} - \frac{1}{k^{2-2\gamma}} \right| \leq \frac{2(1-\gamma)}{k^{3-2\gamma}}
\]
and
\[
\sum_{i=1}^k\sum_{j=1}^k\EE(Y_{i,q}Y_{j,q}) = \sum_{i=-q}^{k-1-q}\sum_{j=-q}^{k-1-q} \cov(X_i,X_j) \leq \kappa k
\]
for some constant \ $\kappa$ \ according to Lemma \ref{vari}. Therefore,
\begin{align*}
&\sum_{k=1}^{n-1}\left| \frac{1}{(k+1)^{2-2\gamma}} - \frac{1}{k^{2-2\gamma}} \right| \sum_{i=1}^k\sum_{j=1}^k\EE(Y_{i,q}Y_{j,q})
     + 2 \sum_{k=1}^{n-1}k^{2\gamma-2} \EE^{1/2}(Y_{k+1,q}^2) \left(\sum_{i,j=1}^k \EE\left(Y_{i,q}Y_{j,q}\right)\right)^{1/2} \\
     & \qquad + 2 \sum_{k=0}^{n-1} k^{2\gamma - 2} \EE(Y_{k+1,q}^2) \\
	 & \leq (2\kappa-2\kappa\gamma)\sum_{k=1}^{n-1} k^{2\gamma-2} + 2 (\kappa U_1)^{1/2}  \sum_{k=1}^{n-1}k^{2\gamma-3/2} + 2 U_1 \sum_{k=0}^{n-1} k^{2 \gamma -2}, 
\end{align*}
where \ $U_1$ \ is the upper boundary of \ $(\var(X_n))_{n \in \NN}$. \ 
The limit of the right hand side as \ $n \to \infty$ \ is finite, which completes the proof. We note the necessarity of $\gamma < \frac{1}{4}$---otherwise the second term in the last expression would not be bounded. This indicates that if we would like to extend Theorem \ref{HA_mu_1} to $\gamma = \frac{1}{2}$ we need sharper estimates in place of Lemma \ref{kole}.
\end{proof}

For the estimation of the first term we will use Lemma \ref{kole} again:
\begin{Lem}
For a time-homogeneous INAR($p$) process satisfying condition
 $\textup{\textbf{C}}_0$ and any $\gamma \in (0,\frac{1}{4})$ we have
\[
\max_{1 \leq k \leq n} k^{\gamma-1}\left|\sum_{i=1}^k M_i\right| = \OO_{\PP}(1).
\]
\end{Lem}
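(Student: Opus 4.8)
The plan is to apply Lemma~\ref{kole} directly to the martingale-difference sequence $(M_i)_{i\in\NN}$ with weights $c_k = k^{\gamma-1}$, in complete parallel to the proof of Lemma~\ref{kgamma}. I would set $Y_i := M_i$ in Lemma~\ref{kole}, so that the object to be controlled is exactly $\max_{1\leq k\leq n} c_k |\sum_{j=1}^k Y_j|$. The strategy is then to show that the three sums appearing on the right-hand side of Lemma~\ref{kole} all converge to finite limits as $n\to\infty$, after which Markov's inequality (as used at the end of Lemma~\ref{kgamma}) converts the resulting uniform bound on $a^2\,\PP(\cdots > a)$ into the claimed $\OO_{\PP}(1)$ statement.

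The key simplification, and the reason this lemma is \emph{easier} than Lemma~\ref{kgamma}, is that $(M_i)_{i\in\NN}$ is a martingale-difference sequence: by \eqref{Mdefi} we have $\EE(M_iM_j)=0$ for $i\neq j$, so the double sums $\sum_{i,j=1}^k \EE(Y_iY_j)$ collapse to the single sums $\sum_{i=1}^k \EE(M_i^2)$. First I would record that $\EE(M_i^2|\cF_{i-1}) = \balpha_2^\top \bX_{i-1}+\sigma^2$, so that $\EE(M_i^2) = \balpha_2^\top \EE(\bX_{i-1})+\sigma^2$; since $(\EE(\bX_i))_{i\in\NN}$ is bounded by \eqref{fergodic} (indeed converges by Lemma~\ref{geom}), there is a constant $\kappa'$ with $\EE(M_i^2)\leq \kappa'$ for all $i$, and hence $\sum_{i=1}^k \EE(M_i^2)\leq \kappa' k$. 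This plays exactly the role that Lemma~\ref{vari} played in the proof of Lemma~\ref{kgamma}, but here it comes for free from conditional orthogonality rather than from the covariance-decay argument.

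Substituting these bounds into Lemma~\ref{kole}, together with $\bigl| (k+1)^{2\gamma-2} - k^{2\gamma-2}\bigr| \leq 2(1-\gamma)k^{2\gamma-3}$ as in Lemma~\ref{kgamma}, yields an upper bound of the form
\[
(2\kappa'-2\kappa'\gamma)\sum_{k=1}^{n-1} k^{2\gamma-2}
 + 2(\kappa')\sum_{k=1}^{n-1} k^{2\gamma-3/2}
 + 2\kappa' \sum_{k=0}^{n-1} k^{2\gamma-2}.
\]
Each of these three series converges as $n\to\infty$ precisely when $\gamma<\tfrac14$: the first and third require $2\gamma-2<-1$, i.e.\ $\gamma<\tfrac12$, while the middle one requires $2\gamma-\tfrac32<-1$, i.e.\ $\gamma<\tfrac14$, which is the binding constraint. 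Thus the right-hand side of Lemma~\ref{kole} is bounded uniformly in $n$, and the conclusion follows exactly as in Lemma~\ref{kgamma}.

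I do not anticipate a genuine obstacle here, since the scalar-valued case sidesteps the componentwise-norm-equivalence step of Lemma~\ref{kgamma} entirely. The only point deserving care is justifying the uniform bound $\EE(M_i^2)\leq\kappa'$ rather than merely an almost-sure ergodic average; this is where one invokes the expectation-level convergence \eqref{fergodic} (or Lemma~\ref{geom}) instead of the almost-sure ergodic theorem \eqref{ergodic}, since Lemma~\ref{kole} is stated in terms of the deterministic quantities $\EE(Y_iY_j)$. Given that, the argument is a near-verbatim repetition of Lemma~\ref{kgamma}, and I would present it compactly by pointing to that proof for the shared details.
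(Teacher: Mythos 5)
Your proposal is correct and follows essentially the same route as the paper's own proof: both apply Lemma~\ref{kole} with $c_k = k^{\gamma-1}$ and $Y_i = M_i$, use the martingale-difference property to collapse the double sums $\sum_{i,j}\EE(Y_iY_j)$ to $\sum_i \EE(M_i^2)$, bound $\EE(M_i^2)$ uniformly, and conclude via the same three convergent series with $\gamma<\tfrac14$ binding in the middle term. Your justification of the uniform bound on $\EE(M_i^2)$ via $\EE(M_i^2\mid\cF_{i-1})=\balpha_2^\top\bX_{i-1}+\sigma^2$ and \eqref{fergodic} is in fact slightly more explicit than the paper's remark that the variances are ``clearly bounded.''
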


\begin{proof}
We apply \ref{kole} in the same way as in the proof of Lemma \ref{kgamma} with $c_k = k^{\gamma-1}$ and $Y_i = M_i$. We note that the $M_k$ are martingale differences, therefore any product $M_iM_j, i \not = j$ has zero mean. Furthermore, the sequence $(\var M_k)_{k \in \NN}$ is clearly bounded, and denoting its upper bound by $U$, we have
\begin{align*}
&\sum_{k=1}^{n-1}\left| \frac{1}{(k+1)^{2-2\gamma}} - \frac{1}{k^{2-2\gamma}} \right| \sum_{i=1}^k\sum_{j=1}^k\EE(Y_{i}Y_{j})
     + 2 \sum_{k=1}^{n-1}k^{2\gamma-2} \EE^{1/2}(Y_{k+1}^2) \left(\sum_{i,j=1}^k \EE\left(Y_iY_j\right)\right)^{1/2} \\
     & \qquad + 2 \sum_{k=0}^{n-1} k^{2\gamma - 2} \EE(Y_{k+1}^2) \\
	 & \leq U (2-2\gamma)\sum_{k=1}^{n-1} k^{2\gamma-2} + 2U \sum_{k=1}^{n-1}k^{2\gamma-3/2} + 2U \sum_{k=0}^{n-1} k^{2 \gamma -2}, 
\end{align*}
whence the final steps of the proof are the same as in Lemma \ref{kgamma}.
\end{proof}

Summarizing our results for the terms of \eqref{eq:max_ineq}: the second term is $\OO(1)$, the fourth one is $\OO_{\PP}(n^{1/2})$ and the first and third terms are $\OO_{\PP}(n^{1-\gamma}),$ which completes our proof.
\qed

\subsection{Proof of Theorem \ref{HA_mu_2}}

The statement can be written in the form
 \[
   \lim_{K \to \infty} \sup_{n \in \NN}
    \PP( |\htau_n - \lfloor n \rho \rfloor| \geq K )
   = 0 ,
 \]
 which is equivalent to
 \[
   \lim_{K \to \infty} \limsup_{n \to \infty}
    \PP( |\htau_n - \lfloor n \rho \rfloor| \geq K )
   = 0 .
 \]
Hence to prove the statement it is enough to show that
 \begin{align}
  \lim_{K \to \infty} \limsup_{n \to \infty}
   \PP\left( \max_{\lfloor n \rho \rfloor - K < k < \lfloor n \rho \rfloor + K}
              \sum_{j=1}^k \hM_j^{(n)}
             \leq \max_{1 \leq k \leq \lfloor n \rho \rfloor - K} \sum_{j=1}^k \hM_j^{(n)}
      \right)
  &= 0 , \label{HA1} \\
  \lim_{K \to \infty} \limsup_{n \to \infty}
   \PP\left( \max_{\lfloor n \rho \rfloor - K < k < \lfloor n \rho \rfloor + K}
              \sum_{j=1}^k \hM_j^{(n)}
             \leq \max_{\lfloor n \rho \rfloor + K \leq k \leq n} \sum_{j=1}^k \hM_j^{(n)}
      \right)
  &= 0 . \label{HA2}
 \end{align}

For \eqref{HA1} we consider with a constant \ $K$,
 \ $K < \lfloor n \rho \rfloor$, \ the estimate
 \begin{multline*}
  \PP\left( \max_{\lfloor n \rho \rfloor - K < k < \lfloor n \rho \rfloor + K}
              \sum_{j=1}^k \hM_j^{(n)}
             \leq \max_{1 \leq k \leq \lfloor n \rho \rfloor - K}
                   \sum_{j=1}^k \hM_j^{(n)}
      \right) \\
  \begin{aligned}
  &\leq \PP\left( \sum_{j=1}^{\lfloor n \rho \rfloor} \hM_j^{(n)}
                  \leq \max_{1 \leq k \leq \lfloor n \rho \rfloor - K}
                        \sum_{j=1}^k \hM_j^{(n)}
           \right)
   = \PP\left( \min_{1 \leq k \leq \lfloor n \rho \rfloor - K}
                \sum_{j= k + 1}^{\lfloor n \rho \rfloor} \hM_j^{(n)}
               \leq 0
         \right) \\
  &= \PP\left( \min_{K \leq \ell \leq \lfloor n \rho \rfloor - 1}
                \sum_{j= \lfloor n \rho \rfloor - \ell + 1}^{\lfloor n \rho \rfloor}
                 \hM_j^{(n)}
               \leq 0
         \right) 
	= \PP\left( \min_{K \leq \ell \leq \lfloor n \rho \rfloor - 1} \ell^{-1}
                \sum_{j= \lfloor n \rho \rfloor - \ell + 1}^{\lfloor n \rho \rfloor}
                 \hM_j^{(n)}
               \leq 0
         \right).
  \end{aligned}
 \end{multline*}

We will use \eqref{decomp} again, note that the dominant term is the second one, and estimate the probability with this in mind. For any $K \leq \ell \leq \nrho$ the expression
\begin{equation}\label{eq:avdecomp}
\ell^{-1} \sum_{j= \lfloor n \rho \rfloor - \ell + 1}^{\lfloor n \rho \rfloor} \hM_j^{(n)}
\end{equation}
can be decomposed according to \eqref{decomp}. Now, \eqref{eq:avdecomp} can only be negative in two cases: either the second term in the decomposition is less or equal to $\frac{\psi}{2}$, or it is greater---in which case one of the other three terms has to be less than $-\frac{\psi}{6}$ (for the definition of $\psi$, see Theorem \ref{HA_mu_1}).

Now it is clear that, after applying \eqref{decomp} to \eqref{eq:avdecomp} we have
\begin{align*}
\PP&\left( \min_{K \leq \ell \leq \lfloor n \rho \rfloor - 1} \ell^{-1}
        \sum_{j= \lfloor n \rho \rfloor - \ell + 1}^{\lfloor n \rho \rfloor}
        \hM_j^{(n)}
        \leq 0
\right) \\
&\leq
	\PP\left( \min_{K \leq \ell \leq \lfloor n \rho \rfloor - 1} \ell^{-1}
			\sum_{j= \lfloor n \rho \rfloor - \ell + 1}^{\lfloor n \rho \rfloor}
			\left[(\balpha - \tbalpha)^\top\EE(\bX_{k-1}) + \left(\mu' - \tmu\right)\right]
			\leq \frac{\psi}{2}
	\right) \\
& \quad +
	\PP\left( \max_{K \leq \ell \leq \lfloor n \rho \rfloor - 1} \left|\ell^{-1}
			\sum_{j= \lfloor n \rho \rfloor - \ell + 1}^{\lfloor n \rho \rfloor}
			M_k\right|
			\geq \frac{\psi}{6}
	\right) \\
& \quad +
	\PP\left( \max_{K \leq \ell \leq \lfloor n \rho \rfloor - 1} \left|\ell^{-1}
			\sum_{j= \lfloor n \rho \rfloor - \ell + 1}^{\lfloor n \rho \rfloor}
			(\balpha - \hbalpha_n)^\top(\bX_{k-1} - \EE(\bX_{k-1}))\right|
			\geq \frac{\psi}{6}
	\right) \\
& \quad +
	\PP\left( \max_{K \leq \ell \leq \lfloor n \rho \rfloor - 1} \left|\ell^{-1}
			\sum_{j= \lfloor n \rho \rfloor - \ell + 1}^{\lfloor n \rho \rfloor}
			\left[(\tbalpha - \hbalpha_n)^\top\EE(\bX_{k-1}) + \left(\tmu - \hmu_n\right)\right]\right|
			\geq \frac{\psi}{6}
	\right).
\end{align*}

As a consequence of Lemma \ref{lem:asymperror} and \eqref{fergodic} the first term can be shown to converge to zero for any $K$ as $n \to \infty$ with the help of the following simple lemma:
\begin{Lem}\label{minconv}
Let \ $a_n \to a > 0, \ n \to \infty$ and \ $a_i > 0$ \ for all \ $i \in \NN$. \ Then
\[
\min_{1 \leq k \leq n} k^{-1} \sum_{i=n-k+1}^n a_i \to a, \quad n \to \infty.
\]
\end{Lem}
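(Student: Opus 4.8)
The plan is to establish the limit by squeezing the quantity $\min_{1\leq k\leq n} A_{n,k}$, where $A_{n,k}:=k^{-1}\sum_{i=n-k+1}^n a_i$, between an upper and a lower bound that both converge to $a$. The upper bound is immediate: taking $k=1$ gives $\min_{1\leq k\leq n} A_{n,k}\leq A_{n,1}=a_n$, so $\limsup_n \min_{1\leq k\leq n}A_{n,k}\leq a$ because $a_n\to a$. The whole content of the lemma is therefore the matching lower bound $\liminf_n \min_{1\leq k\leq n}A_{n,k}\geq a$.

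For the lower bound I would fix $\vare\in(0,a)$ and use $a_n\to a$ to choose $N$ with $a_i>a-\vare$ for all $i\geq N$. The main idea is to split the range of $k$ according to whether the backward window $\{n-k+1,\dots,n\}$ stays inside the `converged' region. If $k\leq n-N+1$, then every index in the window is at least $N$, so $A_{n,k}>a-\vare$ directly. If $k\geq n-N+2$, the window reaches back into the initial segment, but it can contain at most $N-1$ of the early `bad' indices and hence at least $n-N+1$ good ones; dropping the nonnegative contribution of the bad terms (this is where positivity of the $a_i$ enters) yields
\[
A_{n,k}\geq \frac{(n-N+1)(a-\vare)}{k}\geq \frac{(n-N+1)(a-\vare)}{n},
\]
the last inequality because $k\leq n$. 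Since $a-\vare$ dominates $(a-\vare)(n-N+1)/n$ in the first case as well, both cases give the \emph{uniform} estimate $\min_{1\leq k\leq n}A_{n,k}\geq (a-\vare)\bigl(1-(N-1)/n\bigr)$.

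Taking $\liminf_n$ lets the factor $1-(N-1)/n$ tend to $1$, so $\liminf_n \min_{1\leq k\leq n}A_{n,k}\geq a-\vare$, and letting $\vare\downarrow 0$ completes the argument. The only delicate point is the long-window regime $k\geq n-N+2$: one must notice that the number of possibly misbehaving initial terms is a fixed constant ($N-1$, independent of both $n$ and $k$), so their total weight in the average becomes negligible against the growing number $n-N+1$ of good terms. Everything else is a routine two-sided estimate.
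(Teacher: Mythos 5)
Your proof is correct and follows essentially the same route as the paper's: the upper bound comes from taking $k=1$, and the lower bound from splitting the range of $k$ according to whether the backward window $\{n-k+1,\dots,n\}$ stays inside the region where $a_i$ has already converged, with positivity of the $a_i$ used to discard the at most $N-1$ early terms. The only cosmetic difference is that you bound the surviving tail sum termwise by $(n-N+1)(a-\vare)$, while the paper phrases the same estimate as a product of a factor tending to $1$ and a tail Ces\`aro average tending to $a$.
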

\begin{proof}
First we note that for any \ $\vare > 0$ and sufficiently large \ $n$, \ we have \ $\min_{1 \leq k \leq n} k^{-1} \sum_{i=n-k+1}^n a_i < a + \vare$. This can be seen by choosing \ $k=1$ \ for every \ $n$. \ Now we show \ $\min_{1 \leq k \leq n} k^{-1} \sum_{i=n-k+1}^n a_i > a - \vare$. Let \ $\nu(\vare)$ be the threshold index so that for \ $n > \nu(\vare)$ \ we have \ $|a_n - a| <  \frac{\vare}{2}$. \ Let us denote by \ $K$ \ the sum \ $\sum_{i=1}^{\nu(\vare)}a_i$. \ Clearly,
\[
\left|\min_{1 \leq k \leq n - \nu(\vare)} k^{-1} \sum_{i=n-k+1}^n a_i - a\right| < \frac{\vare}{2}.
\]
Furthermore, for any \ $n > k > n - \nu(\vare)$ \ we have
\[
k^{-1} \sum_{i=n-k+1}^n a_i > n^{-1} \sum_{i=\nu(\vare)}^n a_i = \frac{n-\nu(\vare)}{n} \left[(n-\nu(\vare))^{-1} \sum_{i=\nu(\vare)}^n a_i\right].
\]
For sufficiently large \ $n$ \ the first factor is close to $1$, \ and the second factor is closer to \ $a$ \ than \ $\frac{\vare}{2}$ for every $n$. \ This suffices for the proof.
\end{proof}

Because of \eqref{fergodic} and Lemma \ref{altestconv}, the fourth term also converges to zero for all \ $K$ \ as \ $n \to \infty$. Indeed, $(\tbalpha - \hbalpha_n) \stoch 0$ and 
\[\max_{K\leq \ell \leq \nrho}\ell^{-1}	\sum_{j= \lfloor n \rho \rfloor - \ell + 1}^{\lfloor n \rho \rfloor} \EE(\bX_{k-1})
	\leq \max_{1 \leq k \leq \nrho} \EE(\bX_k)\] 
	and due to \eqref{fergodic} the right hand side is bounded as $n \to \infty.$ The same reasoning applies to $(\tmu - \hmu_n).$ The convergence of the second and third terms is summarized in the following lemma.

\begin{Lem}\label{limsupk}
For a time-homogeneous INAR($p$) process satisfying condition
 $\textup{\textbf{C}}_0$ we have for any \ $a > 0$,
\[
\lim_{K \to \infty} \limsup_{n \to \infty} \PP\left( \max_{K \leq \ell \leq \lfloor n \rho \rfloor - 1}\left|
                \ell^{-1} \sum_{j= \lfloor n \rho \rfloor - \ell + 1}^{\lfloor n \rho \rfloor}
                 (\bX_{j-1} - \EE(\bX_{j-1}))\right|
               > a
         \right) = 0
\]
and
\[\lim_{K \to \infty} \limsup_{n \to \infty} \PP\left( \max_{K \leq \ell \leq \lfloor n \rho \rfloor - 1}\left|
                \ell^{-1} \sum_{j= \lfloor n \rho \rfloor - \ell + 1}^{\lfloor n \rho \rfloor}
                 M_j\right|
               > a
         \right) = 0.
\]
\end{Lem}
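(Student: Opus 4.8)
The plan is to treat both statements by a single application of the Kokoszka--Leipus maximal inequality of Lemma \ref{kole}, exactly as in the proofs of Lemma \ref{kgamma} and the lemma immediately following it, the only genuinely new features being the backward windows and the outer limit in $K$. For the vector statement I would argue component-wise, since by the equivalence of vector norms it suffices to bound each coordinate. Fixing $0 \leq q \leq p-1$ and reindexing by $i := \nrho - j + 1$, I set $Y_i := X_{\nrho-i-q} - \EE(X_{\nrho-i-q})$ in the autoregressive case and $Y_i := M_{\nrho-i+1}$ in the martingale case, so that the inner sum $\sum_{j=\nrho-\ell+1}^{\nrho}(\cdot)$ becomes $\sum_{i=1}^{\ell} Y_i$ and the window length $\ell$ plays the role of the summation limit. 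To fold the lower cutoff $\ell \geq K$ into the form $\max_{1 \leq \ell \leq N}$ required by Lemma \ref{kole} (with $N := \nrho-1$), I set the weights $c_\ell := \ell^{-1}$ for $\ell \geq K$ and $c_\ell := 0$ for $\ell < K$; then $\max_{K \leq \ell \leq N} \ell^{-1}\big|\sum_{i=1}^\ell Y_i\big| = \max_{1 \leq \ell \leq N} c_\ell\big|\sum_{i=1}^\ell Y_i\big|$ and Lemma \ref{kole} applies directly.

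The second ingredient is the variance bookkeeping. In the martingale case the $M_k$ are orthogonal with uniformly bounded variances, so $\sum_{i,j=1}^\ell \EE(Y_iY_j) = \sum_{i=1}^\ell \EE(Y_i^2) \leq U\ell$; in the autoregressive case Lemma \ref{vari} (equivalently the geometric covariance decay $\vnorm{\cov(\bX_i,\bX_j)}_* = \OO(\pi^{|i-j|})$ established in its proof) gives $\sum_{i,j=1}^\ell \EE(Y_iY_j) = \OO(\ell)$, while $\EE(Y_i^2)$ is bounded by \eqref{fergodic}. Inserting $c_\ell = \ell^{-1}$, so that $|c_{\ell+1}^2 - c_\ell^2| = \OO(\ell^{-3})$ for $\ell \geq K$ with the single boundary term $c_K^2 = K^{-2}$ from the jump at $\ell = K-1$, the three sums on the right of Lemma \ref{kole} become: the first $\OO(\sum_{\ell \geq K}\ell^{-2}) = \OO(K^{-1})$, the third $\OO(\sum_{\ell \geq K}\ell^{-2}) = \OO(K^{-1})$, and the middle (dominant) $\OO(\sum_{\ell \geq K}\ell^{-3/2}) = \OO(K^{-1/2})$. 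Hence
\[
a^2\, \PP\left( \max_{K \leq \ell \leq \nrho-1} \ell^{-1}\Big|\sum\nolimits_{i=1}^\ell Y_i\Big| > a \right) \leq C\, K^{-1/2}
\]
with $C$ independent of $n$.

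Since this bound does not depend on $n$, taking $\limsup_{n \to \infty}$ leaves it unchanged and then $\lim_{K \to \infty}$ sends it to zero, which is exactly the claim (summing the finitely many coordinate bounds for the vector statement). The main point requiring care, and the only place the argument is delicate, is precisely this uniformity in $n$: because the upper limit $N = \nrho-1$ grows with $n$, the bound can be made $n$-free only because the three Kokoszka--Leipus sums are tails, beginning at $\ell = K$, of \emph{convergent} series; this is why the weights $\ell^{-1}$ are decaying fast enough and why extending them by zero below $K$ is the right device rather than trying to restart the maximal inequality at index $K$. A secondary, routine point is that upon reindexing one meets the initial coordinates $X_0, \dots, X_{-p+1}$, but these contribute only finitely many terms of finite variance (by $\textbf{C}_0$) and do not disturb the $\OO(\ell)$ bounds.
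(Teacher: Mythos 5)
Your proof is correct and follows essentially the same route as the paper: both apply the Kokoszka--Leipus inequality (Lemma \ref{kole}) to the backward partial sums with weights of order $\ell^{-1}$, use Lemma \ref{vari} (resp.\ the orthogonality and bounded variances of the martingale differences) to get the $\OO(\ell)$ covariance bounds, and arrive at a bound of order $K^{-1/2}$ that survives $\limsup_{n\to\infty}$ and vanishes as $K \to \infty$. The only difference is the device for the cutoff $\ell \geq K$: the paper aggregates the first $K$ summands into a single block $Y_1$ and uses weights $c_k = (K+k-1)^{-1}$, whereas you extend the weight sequence by zero below $K$ --- both are legitimate, since Lemma \ref{kole} only requires nonnegative constants.
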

 
\begin{proof} 
Similarly to the proof of Lemma \ref{kgamma} we will again employ Lemma \ref{kole} with $c_k = (K+k-1)^{-1}$ and $Y_{1,q} = \sum_{j= \lfloor n \rho \rfloor - K + 1}^{\lfloor n \rho \rfloor} X_{j-q}$
and $Y_{i,q} = X_{\lfloor n \rho \rfloor - K +1 - i - q}$ for $i \geq 2$ and $0 \leq q \leq p-1$.

By an easy calculation
\[
\sum_{i,j=1}^k\EE(Y_iY_j) = \sum_{i,j=\lfloor n \rho \rfloor - K - k + 1}^{\lfloor n \rho \rfloor} \EE((\bX_{i-1} - \EE(\bX_{i-1}))(\bX_{j-1} - \EE(\bX_{j-1}))).
\]
Therefore, applying the same estimations and notations as in the proof of Lemma \ref{kgamma} with $\gamma = 0$, we obtain the following upper limit for the probability in question:
\[
2\kappa\sum_{\ell=K}^{\nrho-1} (\ell+1)^{-2} + U_1 \sum_{\ell=K}^{\nrho-1}(\ell+1)^{-3/2} + \frac{U_1}{K}+ U_1 \sum_{\ell=K-1}^{\nrho-1} (\ell+1)^{-2}.
\]
It is obvious that as \ $n \to \infty$ \ and then \ $K \to \infty$, the above expression converges to 0, which suffices for our proof. For the second statement the arguments are the same. We note that \ $(M_n)_{n \in \NN}$ \ is a martingale difference sequence, hence its elements are pairwise uncorrelated. Furthermore,\ $\var(M_n)_{n \in \NN}$ \ is bounded, which implies \ $\var(M_1 + \ldots + M_n) = \OO(n)$ \ immediately.
\end{proof}

To prove \eqref{HA2} the proof is analogous wiht one exception: in place of Lemma \ref{minconv} we need the following result.
 \begin{Lem}
	 Let \ $a_n \to a > 0, \ n \to \infty$ and \ $a_i > 0$ \ for all \ $i \in \NN$. \ Then
	 \[
	 \lim_{K \to \infty} \inf_{k \geq K} k^{-1} \sum_{i=1}^k a_i = a, \quad n \to \infty.
	 \]
 \end{Lem}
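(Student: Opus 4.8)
The plan is to reduce the statement to the classical fact that Cesàro averages inherit the limit of the underlying sequence. Writing $b_k := k^{-1}\sum_{i=1}^k a_i$, I first observe that $(b_k)_{k \in \NN}$ is exactly the sequence of arithmetic means of $(a_i)_{i \in \NN}$. Since $a_n \to a$, the Cesàro mean theorem yields $b_k \to a$ as $k \to \infty$. Note that this is structurally simpler than Lemma \ref{minconv}, where the summation window $\{n-k+1,\ldots,n\}$ shifts with $n$; here the window is the fixed initial segment $\{1,\ldots,k\}$, so no separate treatment of a moving initial block is required.

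Second, I would rewrite the left-hand side in terms of $\liminf$. By definition $\inf_{k \geq K} b_k$ is nondecreasing in $K$, and
\[
\lim_{K \to \infty} \inf_{k \geq K} b_k = \liminf_{k \to \infty} b_k.
\]
Since $b_k \to a$, its $\liminf$ coincides with its limit, so the right-hand side equals $a$, which is the claim.

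If a self-contained argument is preferred, to match the $\vare$--$\nu$ style of Lemma \ref{minconv}, I would instead fix $\vare > 0$ and choose $\nu$ with $|a_i - a| < \vare/2$ for $i > \nu$. Splitting $\sum_{i=1}^k a_i$ into the fixed block $\sum_{i=1}^\nu a_i$ and the tail $\sum_{i=\nu+1}^k a_i$ shows that for $k$ large the average $b_k$ lies within $\vare$ of $a$; hence there is $K_0$ with $b_k \in (a-\vare,\, a+\vare)$ for all $k \geq K_0$, giving $a - \vare \leq \inf_{k \geq K} b_k \leq a + \vare$ for every $K \geq K_0$, and letting $\vare \to 0$ finishes the argument. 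The positivity hypotheses $a_i > 0$ and $a > 0$ are not actually needed for this statement---they are merely inherited from the context of its application in the analogue of \eqref{HA2}---so there is no genuine obstacle here; the result is a direct consequence of the convergence of Cesàro means.
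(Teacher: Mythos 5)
Your proposal is correct and follows essentially the same route as the paper: the paper's proof likewise observes that convergence of $a_n$ implies convergence of the Ces\`aro means, so that for $K$ large every average $k^{-1}\sum_{i=1}^k a_i$ with $k \geq K$ is close to $a$. Your additional remarks (the identification of the limit with $\liminf_k b_k$, and that positivity of the $a_i$ is not needed) are accurate but do not change the substance of the argument.
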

 
 \begin{proof}
 We only need to observe that convergence of $a_n$ implies convergence in Cesaro mean as well, therefore, for a sufficiently large $K$ and for all $k \geq K$ the average $k^{-1} \sum_{i=1}^k a_i$ is close to $a$.
 \end{proof}

\subsection{Adapting the proofs of Theorem \ref{HA_mu_1} and Theorem \ref{HA_mu_2} to other forms of the alternative hypothesis}\label{adapt}

The proofs of Theorem \ref{HA_mu_1} and Theorem \ref{HA_mu_2} heavily exploit the relatively simple structure of the test process for detecting change in \ $\mu$ \ only. For the other parameters, even the limit in Theorem \ref{HA_mu_1} will be different, namely, we have the following theorem.
\begin{Thm}
Suppose that \ $\mathrm{H}_\mathrm{A}$ \ holds with
 \ $\tau = \max(\lfloor n \rho \rfloor,1)$, \ $\rho \in \left(0, 1\right)$, \ and the change is
 only in $\alpha_q$, namely, $\alpha_q$ changes from
 \ $\alpha_q'$ \ to \ $\alpha_q''$, \ where \ $\alpha' > \alpha'' > 0$. Suppose, furthermore, that condition $\textup{\textbf{C}}_\textup{A}$ holds.
\ Then for any \ $\gamma \in \left(0, \frac{1}{4}\right)$ \ we have
 \[
   \max_{1 \leq k \leq n} \sum_{j = 1}^k \hM_j^{(n)}X_{j-q}
   = n \psi_q
     + \OO_{\PP}(n^{1-\gamma}) \qquad \text{as \ $n \to \infty$,}
 \]
 with
 \[
   \psi_q := \rho (1-\rho) (\alpha_q'-\alpha_q'')
     \be_{q}^\top
     \bC'' \tbQ^{-1} \bC'
     \be_{q} > 0 ,
 \]
 where \ $\be_{q}$ \ is the \ $q$-th \textup{(}$p+1$-dimensional\textup{)}
 unit vector.
\end{Thm}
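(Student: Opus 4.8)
The plan is to adapt the proof of Theorem~\ref{HA_mu_1}, exploiting that the statistic $\sum_{j=1}^k \hM_j^{(n)} X_{j-q}$ is merely a coordinate projection of the same object. Since $X_{j-q} = \be_q^\top \mx{\bX_{j-1}}$ for $1 \leq q \leq p$, we have $\sum_{j=1}^k \hM_j^{(n)} X_{j-q} = \be_q^\top \sum_{j=1}^k \hM_j^{(n)} \mx{\bX_{j-1}}$, i.e.\ the $q$-th coordinate of exactly the vector whose $(p+1)$-st coordinate drove Theorem~\ref{HA_mu_1} (there $\be_{p+1}^\top \mx{\bX_{j-1}} = 1$). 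The essential new feature is that the column $\mx{\bX_{j-1}}\mx{\bX_{j-1}}^\top\be_q$ now carries genuine \emph{second-order} terms $X_{j-a}X_{j-q}$, whereas the choice $q=p+1$ selected only the affine last column. I would therefore replace \eqref{decomp} by its second-order analogue: for $j \leq \nrho$,
\[
\hM_j^{(n)} X_{j-q} = M_j' X_{j-q} + (\btheta' - \hbtheta_n)^\top \mx{\bX_{j-1}}\mx{\bX_{j-1}}^\top \be_q ,
\]
with $M_j'',\btheta''$ in place of $M_j',\btheta'$ for $j > \nrho$, and then split the second summand into a deterministic leading part $(\btheta'-\tbtheta)^\top \EE\bigl(\mx{\bX_{j-1}}\mx{\bX_{j-1}}^\top\bigr)\be_q$, a centred second-order fluctuation $(\btheta'-\tbtheta)^\top\bigl(\mx{\bX_{j-1}}\mx{\bX_{j-1}}^\top - \EE(\cdots)\bigr)\be_q$, and an estimation error $(\tbtheta - \hbtheta_n)^\top\mx{\bX_{j-1}}\mx{\bX_{j-1}}^\top\be_q$, mirroring the four terms of \eqref{eq:max_ineq}.

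For the leading part I would first establish the $\alpha_q$-analogue of Lemma~\ref{lem:asymperror}. The pre- and post-change vectors now differ only in their $q$-th entry, so $\btheta'-\btheta'' = (\alpha_q'-\alpha_q'')\be_q$, and from Lemma~\ref{psiasymp} together with $\tbQ = \rho\bC' + (1-\rho)\bC''$ one obtains $\btheta'-\tbtheta = (1-\rho)(\alpha_q'-\alpha_q'')\,\tbQ^{-1}\bC''\be_q$ and $\btheta''-\tbtheta = -\rho(\alpha_q'-\alpha_q'')\,\tbQ^{-1}\bC'\be_q$. Using symmetry of $\bC',\bC'',\tbQ^{-1}$ this gives $(\btheta'-\tbtheta)^\top\bC'\be_q = \psi_q/\rho > 0$ and $(\btheta''-\tbtheta)^\top\bC''\be_q = -\psi_q/(1-\rho) < 0$, positivity of $\psi_q$ following, just as in Lemma~\ref{lem:asymperror}, from positive definiteness of $\bC'$, $\bC''$ and $\tbQ^{-1}$. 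Invoking the \emph{second-order} half of Lemma~\ref{geom} (and, after the change, the analogue adapted to the non-stationary start, as around \eqref{afergodic}) to control the rate at which $\EE(\mx{\bX_{j-1}}\mx{\bX_{j-1}}^\top)$ approaches $\bC'$ resp.\ $\bC''$, the deterministic sum equals $n\psi_q + \OO(1)$ and its maximum over $k$ is attained within $\OO(1)$ of $\nrho$, exactly as the second term of \eqref{eq:max_ineq} was handled.

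It then remains to bound the three stochastic terms by $\OO_{\PP}(n^{1-\gamma})$. The estimation error is $\OO_{\PP}(\sqrt n)$, since Lemma~\ref{altestconv} gives $\hbtheta_n-\tbtheta = \OO_{\PP}(n^{-1/2})$ while $\max_{k \leq n}\vnorm{\sum_{j \leq k}\mx{\bX_{j-1}}\mx{\bX_{j-1}}^\top} = \vnorm{\bQ_n} = \OO_{\PP}(n)$. The sum $\sum_j M_j' X_{j-q}$ is a martingale, because $X_{j-q}$ is $\cF_{j-1}$-measurable and $\EE(M_j'\mid\cF_{j-1})=0$; its increments are therefore uncorrelated with bounded variances (finite under the sixth-moment hypothesis), so Lemma~\ref{kole} applied with $c_k = k^{\gamma-1}$, exactly as in Lemma~\ref{kgamma}, yields $\OO_{\PP}(n^{1-\gamma})$. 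The main obstacle is the centred second-order term: here the underlying partial sums are $\sum_j\bigl(X_{j-a}X_{j-q} - \EE(X_{j-a}X_{j-q})\bigr)$, and the decisive input is that they have variance $\OO(n)$---which is precisely Lemma~\ref{vari}(ii), or rather its vector form \eqref{ii} for the whole second-moment process $\bX^{\otimes 2}$. Feeding this bound into Lemma~\ref{kole} (again with $c_k = k^{\gamma-1}$) produces the $\OO_{\PP}(n^{1-\gamma})$ estimate, and the restriction $\gamma < \frac14$ reappears through the cross term of Lemma~\ref{kole}, exactly as noted after Lemma~\ref{kgamma}. The genuinely new effort relative to Theorem~\ref{HA_mu_1} is thus confined to second order: every first-moment ergodic and variance estimate must be upgraded to its $\bX^{\otimes 2}$-counterpart, which is why Lemmas~\ref{geom} and~\ref{vari} were set up with their second-order parts to begin with.
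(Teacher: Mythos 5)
Your proposal is correct and follows essentially the same route as the paper: the paper likewise proves this theorem by adapting the proof of Theorem \ref{HA_mu_1}, using the second-order decomposition \eqref{alphadecomp} in place of \eqref{decomp}, the $\psi_q$-analogue of Lemma \ref{lem:asymperror}, Lemma \ref{vari}(ii) fed into Lemma \ref{kole} for the fluctuation and martingale terms, and Lemma \ref{altestconv} for the estimation error. The only difference is bookkeeping in the decomposition: your centred fluctuation term carries the deterministic coefficient $(\btheta'-\tbtheta)$ and your estimation-error term the raw matrices, whereas the paper's \eqref{alphadecomp} attaches $(\balpha'-\hbalpha_n)$ to the fluctuation and keeps expectations in the estimation-error term---an equally valid (arguably slightly cleaner) arrangement.
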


The proof is analogous to that of Theorem \ref{HA_mu_1}. All the techniques in the proofs can be directly adapted, noting that all means converge due to \eqref{fergodic}, and Lemma \ref{altestconv} will remain true under any form of the alternative hypothesis. In place of \eqref{decomp}, we can then write
\begin{equation}\label{alphadecomp}
\begin{split}
\hM_k^{(n)}X_{k-q} & = M_k'X_{k-q} + \left[(\balpha' - \tbalpha)^\top\EE(\bX_{k-1}X_{k-q}) + \left(\mu' - \tmu\right)\EE(X_{k-q})\right] \\
& \quad + (\balpha' - \hbalpha_n)^\top(\bX_{k-1}X_{k-q} - \EE(\bX_{k-1}X_{k-q})) + \left(\mu' - \tmu\right)(X_{k-q} - \EE(X_{k-q})) \\
& \quad + \left[(\tbalpha - \hbalpha_n)^\top\EE(\bX_{k-1}X_{k-q})) + \left(\tmu - \hmu_n\right)\EE(X_{k-q})\right].
\end{split}
\end{equation}

As in \eqref{decomp}, the second term will be dominant here and in place of Lemma \ref{lem:asymperror} we have
\[
(\btheta' - \tbtheta)^\top
\begin{bmatrix}
\EE (\tbX' \tX'_{-q+1}) \\
\EE (\tX'_{-q+1})
\end{bmatrix}
= \frac{\psi_q}{\rho} > 0, \qquad (\btheta'' - \tbtheta)^\top \begin{bmatrix}
\EE (\tbX'' \tX''_{-q+1}) \\
\EE (\tX''_{-q+1})
\end{bmatrix}
= -\frac{\psi_q}{1-\rho},
\]
We need (ii) in Lemma \ref{vari} to be able to apply Lemma \ref{kole} to the first and third terms in \eqref{alphadecomp}. The rest of the adaptation is straightforward.

 If there is a change in multiple parameters, the analogue of Theorem \ref{HA_mu_1} can still be stated, but the sign of the dominant term depends nontrivially on the directions of the changes in these parameters (i.e., in Lemma \ref{lem:asymperror} we can determine the limit but we cannot determine its sign without calculating it explicitly). The one-sided test, however, relies on our knowledge of the sign of the dominant term. This warns us that we should use the one-sided tests only when testing for change in a single parameter.
 
\section{Technical details}

\subsection{Invertibility of the matrices $\bQ_n, \ \bC'$ and $\bC''$}\label{sec:invert}

In \eqref{CLSests} we assumed that the matrix $\bQ_n$ is invertible, and similarly, in \ref{altests} we assumed that $\bC'$ and $\bC''$ are positive definite. The following two lemmas will show that these assumptions are correct.

\begin{Lem}\label{lem:qinvert}
For a homogeneous INAR($p$) process with $\mu>0$, for which either $\alpha_q \in (0,1)$ for some $q \in \{1,2,\ldots,p\}$ or $\sigma>0$, we have
\[
\PP(\bQ_n \text{ is singular}) \to 0.
\]
\end{Lem}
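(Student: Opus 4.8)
The plan is to reduce the statement to the positive definiteness of the limiting matrix $\tbQ_{\mathrm{H_0}} := \EE\bigl(\mx{\tbX}\mx{\tbX}^\top\bigr)$ introduced in the proof of Theorem \ref{theta}. By the ergodic theorem \eqref{ergodic}, applied componentwise to the entries of $\mx{\bX_{k-1}}\mx{\bX_{k-1}}^\top$ (which have finite mean under the standing moment assumptions), we have $n^{-1}\bQ_n \as \tbQ_{\mathrm{H_0}}$. Since $\det(\cdot)$ is continuous and $\det(n^{-1}\bQ_n) = n^{-(p+1)}\det(\bQ_n)$, the event $\{\bQ_n \text{ singular}\}$ equals $\{\det(n^{-1}\bQ_n)=0\}$, which, once we know $\det\tbQ_{\mathrm{H_0}} > 0$, is contained in $\{|\det(n^{-1}\bQ_n) - \det\tbQ_{\mathrm{H_0}}| \geq \det\tbQ_{\mathrm{H_0}}\}$; the latter has probability tending to $0$ by the (a.s., hence in probability) convergence of the determinants. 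Thus everything rests on showing that $\tbQ_{\mathrm{H_0}}$ is positive definite.

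To establish positive definiteness I would argue by contradiction: suppose there is $\bc = (c_1,\dots,c_{p+1})^\top \neq \bzero$ with $\bc^\top \tbQ_{\mathrm{H_0}}\,\bc = \EE\bigl[(\bc^\top \mx{\tbX})^2\bigr] = 0$, i.e. $c_1 \tX_0 + \cdots + c_p \tX_{-p+1} + c_{p+1} = 0$ almost surely. Running the stationary chain $(\bX_k)_{k\geq 0}$ with $\bX_0 \distre \tbX$, every $\bX_k$ has the stationary law, so $c_1 X_k + \cdots + c_p X_{k-p+1} + c_{p+1} = 0$ a.s.\ for each $k$. If $c_1 = \cdots = c_p = 0$, this forces $c_{p+1}=0$, contradicting $\bc\neq\bzero$; otherwise let $q^*$ be the smallest index in $\{1,\dots,p\}$ with $c_{q^*}\neq 0$. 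Evaluating the relation at $k = q^*$ expresses $X_1$ as a deterministic affine function of $X_0, X_{-1}, \dots, X_{q^*-p+1}$, all of which are $\cF_0$-measurable; hence $X_1$ would be $\cF_0$-measurable, so $\var(X_1 \mid \cF_0) = 0$ a.s.

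The key computation is that, by the conditional-variance identity underlying the definition of $N_k$, one has $\var(X_1 \mid \cF_0) = \EE(M_1^2\mid\cF_0) = \balpha_2^\top \bX_0 + \sigma^2$ with $\balpha_2 = [\alpha_1(1-\alpha_1),\dots,\alpha_p(1-\alpha_p)]^\top$. It then suffices to show $\PP(\balpha_2^\top\bX_0 + \sigma^2 > 0) > 0$, which contradicts $\var(X_1\mid\cF_0) = 0$ a.s. If $\sigma^2 > 0$ this is immediate since $\balpha_2^\top\bX_0 \geq 0$. If instead $\sigma^2 = 0$ but $\alpha_q \in (0,1)$ for some $q$, then $\balpha_2^\top \bX_0 \geq \alpha_q(1-\alpha_q) X_{-q+1}$ with $\alpha_q(1-\alpha_q) > 0$, so it is enough that $\PP(\tX_{-q+1} > 0) > 0$. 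This follows from $\mu > 0$: the stationary mean equation $\EE(\tbX) = \bA \EE(\tbX) + \mu \be_1$ gives $\EE(\tbX) = \mu(\bE_p - \bA)^{-1}\be_1$, whose first coordinate is positive because $\rho(\bA) < 1$ and $\bA \geq 0$ make $(\bE_p-\bA)^{-1}=\sum_{m\geq 0}\bA^m$ nonnegative with positive diagonal; hence $\EE(\tX_0) > 0$, so $\PP(\tX_0 > 0) > 0$, and the claim follows by stationarity.

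I expect the main obstacle to be exactly this nondegeneracy step: converting the algebraic relation $\bc^\top\mx{\tbX}=0$ into a statement about the one-step conditional law and ruling it out. The delicate point is the case $c_1 = 0$, which is why I pass to the stationary chain and evaluate the relation at the shifted time $k = q^*$, so that the most recent variable carrying a nonzero coefficient is a genuinely ``fresh'' value $X_1$ generated from $\cF_0$ with positive conditional variance on a set of positive probability. Exactly the same argument, applied to the stationary laws before and after the change, yields the positive definiteness of $\bC'$ and $\bC''$ asserted in \ref{altests}.
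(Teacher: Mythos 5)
Your proposal is correct, but it takes a genuinely different route from the paper's, and the two are not equivalent in generality. The paper never invokes stationarity: it observes that $\bQ_n$ is singular exactly when the trajectory of the vectors $(\bX_{i-1}^\top,1)^\top$, $i=1,\dots,n$, stays inside a proper subspace of $\RR^{p+1}$, that only countably many subspaces can be spanned by points of $\NN_0^p\times\{1\}$, and then kills each fixed such subspace $S$ by a transition-probability bound: confinement to $S$ forces the ``fresh'' coordinate at certain times to hit one prescribed integer, an event whose conditional probability is uniformly bounded by $\max_{k}\PP(\vare_1=k)<1$ when $\sigma>0$, and by $\max(\alpha_q,1-\alpha_q)<1$ in the degenerate-innovation case. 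That argument needs no ergodic theorem, no moment conditions and, importantly, no stability: the lemma as stated does not assume $\alpha_1+\cdots+\alpha_p<1$, and the paper's remark after the proof stresses that the initial distribution is arbitrary. Your reduction to positive definiteness of $\tbQ_{\mathrm{H_0}}$ presupposes that a stationary law exists (stability) and that it has finite second moments, so that $n^{-1}\bQ_n$ converges almost surely to a finite limit; under the paper's standing assumptions (finite innovation variance, restriction to the stable case) both hold, so your proof does cover every place where the lemma is used. Granting that, your argument is sound: the determinant step is fine, and the conditional-variance contradiction --- shifting the linear relation to time $k=q^*$ so that the leading nonzero coefficient multiplies the fresh value $X_1$, then combining $\var(X_1\mid\cF_0)=\balpha_2^\top\bX_0+\sigma^2$ with $\EE(\tX_0)\geq\mu>0$ --- is a clean substitute for the paper's subspace machinery, and, as you note, it also yields the positive definiteness of $\bC'$ and $\bC''$ directly, which the paper obtains as corollaries of its own proof. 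In short, your proof is shorter and more conceptual within the stable, finite-variance setting; what the paper's longer combinatorial proof buys is exactly the extra generality that your route gives up.
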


\begin{proof}
Since
\[
\bQ_n = \sum_{i=1}^{n}\mx{\bX_{i-1}} \mx{\bX_{i-1}}^\top
\]
is a sum of positive semidefinite matrices, it is positive semidefinite itself. Therefore, its singularity is equivalent to the condition that for some $0 \not = \bv \in \RR^{p+1}$ and every index $i\in\{1,\ldots,n\},$ we have
\[
\bv^\top \mx{\bX_{i-1}} \mx{\bX_{i-1}}^\top \bv = 0,
\]
which is equivalent to the condition that the linear span of \[\left\{\mx{\bX_{i-1}}, i=1,\ldots,n\right\}\] is a proper subspace of $\RR^{p+1}$. Now, using the continuity of probability, our statement is equivalent to the following:
\begin{equation}\label{eq:span}
\PP \left(\text{span} \left\{\mx{\bX_{i-1}}, i \in \NN \right\} < \RR^{p+1} \right) = 0,
\end{equation}
where $<$ denotes proper subspace.
For simplicity, throughout the proof we will use the notation
\[
\bY_i = \mx{\bX_{i-1}}, \  i \in \NN.
\]
It is clear that all values of $(\bY_i)_{i \in \NN}$ fall into $\NN_0^p \times \{1\}$. We introduce the following notation for the set of spaces that can be spanned by the values of the process:
\begin{equation}
\cS:=\{S < \RR^{p+1}: S = \text{span}\{\by_1, \by_2, \ldots, \by_n\},  \ \by_1, \by_2, \ldots, \by_n \in \NN_0^p\times\{1\}, \ n \in \NN\}.
\end{equation}
We can notice that $\cS$ is countable. Indeed, every generating system of a subspace contains a basis, therefore every subspace $S \in \cS$ has a basis whose elements are from $\NN_0^p\times \{1\}$. Such a basis is from $(\NN_0^p\times \{1\})^{k}$ where $k = \dim S$, and $0 \leq k \leq p$, and of course a basis corresponds to only one subspace. Now, since $\NN_0^p\times \{1\}$ is countable, $(\NN_0^p\times \{1\})^{k}$ is also countable for any $k \in \NN$, therefore $\cup_{k=0}^p (\NN_0^p\times \{1\})^{k}$ is also countable, and so is $\cS$.

Now we reformulate the event in \eqref{eq:span}:
\begin{equation}
\begin{split}
\left\{\text{span} \left\{\bY_i, i\in \NN \right\} < \RR^{p+1} \right\} &=
\bigcup_{S < \RR^{p+1}} \left\{\text{span} \left\{\bY_i, i \in \NN \right\} = S \right\} =
\bigcup_{S \in \cS} \left\{\text{span} \left\{\bY_i, i \in \NN \right\} = S \right\} \\& \subseteq \bigcup_{S \in \cS} \left\{\text{span} \left\{\bY_i, i \in \NN \right\} \subseteq S \right\}.
\end{split}
\end{equation}
Since the last union is countable, we can apply $\sigma$-subadditivity to show \eqref{eq:span} if we can prove
\begin{equation}\label{eq:impossubsets}
\PP \left(\text{span}\left\{\bY_i, i \in \NN \right\} \subseteq S\right) = \lim_{n \to \infty} \PP \left(\text{span}\left\{\bY_1, \bY_2, \ldots, \bY_n \right\} \subseteq S\right) = 0, \quad \forall S \in \cS.
\end{equation}
Here the first equality is trivial by the continuity of probability; it is the second equality which requires a more detailed proof.
The first step in the proof of \eqref{eq:impossubsets} relies on the mechanism by which the components of $\bY_{i+1}$ can be obtained from those of $\bY_i$. For a fixed $S \in \cS$ the elements of $\cS$ can be viewed as the solutions of a homogeneous system of independent linear equations, i.e., $\by \in S$ if and only if
\begin{equation}\label{eq:LES}
\sum_{j=1}^{p+1} \lambda_{i,j} y^{(j)} = 0, \quad i=1,2,\ldots , p+1-\dim S.
\end{equation}
This representation is not unique, but we can fix one such representation. Now let us introduce
\begin{align*}
K(S) &:= \min\{j \in \{1,2,\ldots,p\}:\max_i |\lambda_{i,j}| > 0\} \text{ and } \\
c(s) &:=\min \{i\in \{1,2,\ldots, p+1-\dim S\}: |\lambda_{i,K(S)}| > 0\}.
\end{align*}
This notation means that $K(S)$ is the first column index for which a nonzero coefficient appears in some equation in \eqref{eq:LES} and the first nonzero coefficient in the $c(s)$-th equation has index $K(S)$.
We also note that $K(S) = p+1$ is impossible because that would mean that the only equation is $y^{(p+1)}=0$, which does not hold for any element of $\NN_0^p \times \{1\}.$

Let us now fix an arbitrary $i \in \NN$ and $\omega \in \Omega$ from our underlying probability space such that $\bY_i(\omega) = \by =(y^{(1)}, y^{(2)}, \ldots, y^{(p)}, 1)^\top$. Then we have 
\[
\bY_{i+K(S)}(\omega) = \left(X_{i+K(S)-1}(\omega), \ldots, X_i(\omega), y^{(1)}, \ldots, y^{(p-K(S))},1\right)^\top \quad \text{(see \eqref{eq:stochregression})}.
\]
Hence, for $\bY_{i+K(S)}(\omega) \in S$ to hold, it is necessary (but usually not sufficient) that $\bY_{i+K(S)}(\omega)$ satisfy the $c(s)$-th equation in \eqref{eq:LES}, i.e.,
\begin{align*}
\sum_{j=1}^{K(S)} &\lambda_{c(s),j} X_{i+K(S)-j}(\omega) + \sum_{j=K(S)+1}^{p} \lambda_{c(s),j} y^{(j-K(S))} + \lambda_{c(s),p+1} = 0 \\ &\Leftrightarrow \lambda_{c(s), K(S)} X_i(\omega) + \sum_{j=K(S)+1}^{p} \lambda_{c(s),j} y^{(j-K(S))} + \lambda_{c(s),p+1}= 0.
\end{align*}
This linear equation has a unique solution for $X_i(\omega)$ because $\lambda_{c(s), K(S)} \not = 0$. Let us denote this unique solution by $m(\by, S)$ (by simple algebraic considerations one can see that this quantity does not depend on the representation on \eqref{eq:LES}, but this is not necessary to our proof). Therefore, if $X_i(\omega) \not = m(\by,S),$ then $\omega \not \in \{\bY_{i+K(S)} \in S, \bY_i = \by\}$, hence
\[
\{\bY_{i+K(S)} \in S, \bY_i = \by\} \subseteq \{X_{i} = m(\by,S), \bY_i = \by\} \quad \forall i \in \NN, \quad \forall \by \in \NN_0^p \times \{1\}.
\]
If $m(\by, S) \not \in \NN$, then we have $\{\bY_{i+K(S)} \in S, \bY_i = \by\} = \emptyset$.

Now we will consider the second event in \eqref{eq:impossubsets} for $n=n+K(S)$, and split it according to the initial value of the process:
\begin{equation}\label{eq:split}
\left\{\text{span}\left\{\bY_1, \bY_2, \ldots, \bY_{n+K(S)} \right\} \subseteq S\right\} = \bigcup_{\by_1 \in \NN_0^p \times \{1\}} \{\text{span}\left\{\bY_1, \bY_2, \ldots, \bY_{n+K(S)}\right\} \subseteq S, \bY_1 = \by_1 \}.
\end{equation}
The individual events in the union can be transformed in the following way: 
\begin{equation}\label{eq:equiv}
\begin{split}
&\{\text{span}\left\{\bY_1, \bY_2, \ldots, \bY_{n+K(S)}\right\} \subseteq S, \bY_1 = \by_1 \} = 
\{\bY_1 \in S, \bY_2 \in S, \ldots, \bY_{n+K(S)} \in S, \bY_1=\by_1\} \\
&\quad = \{\bY_1 \in S, \bY_2 \in S, \ldots, \bY_{K(S)} \in S, \bY_1 = \by_1, \bY_{1+K(S)}\in S, \ldots, \bY_{n+K(S)} \in S\} \\
&\quad \subseteq \{\bY_1 \in S, \bY_2 \in S, \ldots, \bY_{K(S)} \in S, \bY_1 = \by_1, X_1=m(y_1, S), \bY_{2+K(S)} \in S, \ldots, \bY_{n+K(S)} \in S\} \\
&\quad = \{\bY_1 \in S, \bY_2 \in S, \ldots, \bY_{K(S)} \in S, \bY_1 = \by_1, \bY_2=\by_2, \bY_{2+K(S)} \in S, \ldots, \bY_{n+K(S)} \in S\} \\
&\quad \subseteq \{\bY_1 \in S, \bY_2 \in S, \ldots, \bY_{K(S)} \in S, \bY_1 = \by_1, \bY_2=\by_2, X_2=m(y_2,S), \bY_{3+K(S)} \in S, \ldots, \bY_{n+K(S)} \in S\} \\
&\quad = \{\bY_1 \in S, \bY_2 \in S, \ldots, \bY_{K(S)} \in S, \bY_1 = \by_1, \bY_2=\by_2, \bY_3=\by_3, \bY_{3+K(S)} \in S, \ldots, \bY_{n+K(S)} \in S\}\\
&\quad \ \vdots \\
&\quad \subseteq \{\bY_1 \in S, \bY_2 \in S, \ldots, \bY_{K(S)} \in S, \bY_1 = \by_1, \bY_2=\by_2, \bY_3=\by_3, \ldots, \bY_{n} =\by_n\},
\end{split}
\end{equation}
where the sequence $(\by_i)_{i=1}^n$ is defined by the recursion
\begin{equation}\label{eq:ysequence}
\by_i =
\begin{bmatrix}
m(\by_{i-1},S) \\
y_{i-1}^{(1)} \\
\vdots \\
y_{i-1}^{(p-1)} \\
1
\end{bmatrix}
, \quad i=2,3,\ldots,n.
\end{equation}
We would like to represent the probability of the last event in \eqref{eq:equiv} as a product of transition probabilities. For this we first need to determine whether the event is empty, and now we will give to necessary conditions on $\by_1$ for its nonemptiness.
The first condition is, clearly, that all elements of the sequence defined in \eqref{eq:ysequence} fall into $\NN_0^p \times \{1\}$. We will not investigate this condition in any further detail, we only note that this imposes a deterministic condition on $\by_1$. Another deterministic condition is that $\bY_1 \in S, \bY_2 \in S, \ldots, \bY_{K(S)} \in S$ should all hold. Because the first $K(S)-1$ coefficients are all zero in any equation in $\eqref{eq:LES}$ and $\bY_1$ contains all the components indexed $K(S)$ or greater in $\bY_1, \ldots, \bY_{K(S)}$, the validity of these inclusions is determined by $\by_1$ alone. This imposes the second (again, deterministic) condition on $\by_1$. If we denote the set of $\by_1$ which fulfill both these conditions by $U_n$, we have from \eqref{eq:split} and \eqref{eq:equiv},
\[
\left\{\text{span}\left\{\bY_1, \bY_2, \ldots, \bY_{n+K(S)} \right\} \subseteq S\right\} \subseteq
\bigcup_{\by_1 \in U_n} \{\bY_1 = \by_1, \bY_2=\by_2, \bY_3=\by_3, \ldots, \bY_{n} =\by_n\},
\]
hence by $\sigma$-subadditivity ($U_n$ is clearly countable),
\begin{equation}
\begin{split}
\PP\left(\text{span}\left\{\bY_1, \bY_2, \ldots, \bY_{n+K(S)} \right\} \subseteq S\right) &\leq
\sum_{\by_1 \in U_n} \PP \left(\bY_1 = \by_1, \bY_2=\by_2, \ldots, \bY_{n} =\by_n\right) \\
& = \sum_{\by_1 \in U_n} \PP(\bY_1=\by_1) p_{\by_1,\by_2}p_{\by_2,\by_3}\cdots p_{\by_{n-1}, \by_{n}},
\end{split}
\end{equation}
where $p_{\bu,\bv}$ denotes the transition probability of $\bY$ from $\bu$ to $\bv$. 
Because the sets $(U_n)_{n \in \NN}$ form a nonincreasing sequence (the second condition does not depend on $n$, and the first one become more restrictive as $n$ increases), it is sufficient to show that for any sequence $(\by_i)_{i \in \NN} \in (\NN_{0} ^p \times \{1\})^{\NN}$ we have
\begin{equation}\label{eq:fixtrajectory}
\lim_{n \to \infty} p_{\by_1,\by_2}p_{\by_2,\by_3}\cdots p_{\by_{n-1}, \by_{n}} = 0,
\end{equation}
and from this we will get \eqref{eq:impossubsets}.
For the proof of \eqref{eq:fixtrajectory} we will need to establish upper bounds for the transition probabilities.
We will first consider the case when $\sigma > 0$, i.e., when the innovation distribution is nondegenerate.

Let us fix $\bu, \bv \in \NN_{0} ^p \times \{1\}$ so that $v^{(2)} = u^{(1)}, v^{(3)} = u^{(2)}, \ldots, v^{(p)} = u^{(p-1)}, v^{(p+1)}=1$. We would like to give an upper bound for $p_{\bu,\bv}$. We have for every $i \in \NN$ and any $m \in \NN_0$,
\begin{align*}
\PP\left(\bY_{i+1} = \bv \big|\bY_i=\bu, \sum_{j=1}^p \sum_{\ell=1}^{u^{(j)}}\xi_{j,i,\ell} =m \right) &= \PP\left(\vare_{i} = v^{(1)} - m \big|\bY_i=\bu, \sum_{j=1}^p \sum_{\ell=1}^{u^{(j)}}\xi_{j,i,\ell} =m\right) \\ &= \PP\left(\vare_{i} = v^{(1)}- m\right).
\end{align*}
Applying the law of total probability we get
\begin{equation}
p_{\bu,\bv} = \sum_{m \in \NN_0} \PP(\vare_{i} = v^{(1)}- m) \PP\left(\sum_{j=1}^p \sum_{\ell=1}^{u^{(j)}}\xi_{j,i,\ell} =m\right) \leq \max_{k \in \NN_0} \PP(\vare_{i} = k) < 1,
\end{equation}
since the innovation distribution was nondegenerate. Therefore, if $\sigma>0$, then we have a uniform upper bound on the transition probabilities, which implies $\eqref{eq:fixtrajectory}$ immediately.

The other case is if the innovation distribution is degenerate. First we note that in this case the innovation is equal to its expectation $\mu > 0$ almost surely, so that all components of $\bY_i$ are  positive for $i \geq p+1$. According to the conditions, there is a coefficient $\alpha_q, q \in \{1,2,\ldots,p\}$ such that $0 < \alpha_q < 1$. Similarly to the previous reasoning, if additionally we suppose that all components of $\bu$ and $\bv$ are greater or equal to $\mu$, we have
\begin{align*}
\PP&\left(\bY_{i+1} = \bv \big|
		\bY_i=\bu, \mu+\left(\sum_{\substack{j=1 \\ j \not = q}}^p 
		\sum_{\ell=1}^{u^{(j)}}\xi_{j,i,\ell}\right) + \sum_{\ell=1}^{u^{(q)}-1} \xi_{q,i,\ell} =m \right)
	\\ &= \PP\left(\xi_{q,i,u^{(q)}} = v^{(1)} - m \big|
		\bY_i=\bu, \mu+\left(\sum_{\substack{j=1 \\ j \not = q}}^p 	
		\sum_{\ell=1}^{u^{(j)}}\xi_{j,i,\ell}\right) + \sum_{\ell=1}^{u^{(q)}-1} \xi_{q,i,\ell} =m\right)
	\\ &= \PP\left(\xi_{q,i,u^{(q)}} = v^{(1)}- m\right).
\end{align*}
Here we note that $\xi_{q,i,u^{(q)}}$ is a meaningful notation because $u^{(q)} \geq \mu$ and $\mu$ is a positive integer.
Applying the law of total probability again, we have that
\[
p_{\bu,\bv} \leq \max(\alpha_q, 1-\alpha_q) < 1,
\]
which again gives a uniform upper bound for the transition probabilities and yields \eqref{eq:fixtrajectory}. With this our proof is complete.
\end{proof}

It may be worth noting that Lemma \ref{lem:qinvert} imposes very weak conditions on the process---we only neglect the trivial case when all innovation and offspring distributions are degenerate. Also, the lemma does not require that the process start from zero---the initial distribution can be arbitrarily chosen on $U$. This gives us a chance to prove two important corollaries.

\begin{Cor}
For an INAR($p$) process under the alternative hypothesis satisfying the assumptions of Lemma \ref{lem:qinvert} both before and after the change, and $\tau = \max(1, \nrho)$ for some $\rho > 0$ constant, we have
\[
\PP(\bQ_n \text{is singular}) \to 0.
\]
\end{Cor}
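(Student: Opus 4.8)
The plan is to discard the post-change part of the sample entirely and reduce the claim to Lemma \ref{lem:qinvert} applied to the homogeneous segment before the change.

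First I would split the defining sum of $\bQ_n$ at the change point,
\[
\bQ_n = \sum_{k=1}^{\nrho} \mx{\bX_{k-1}} \mx{\bX_{k-1}}^\top + \sum_{k=\nrho+1}^{n} \mx{\bX_{k-1}} \mx{\bX_{k-1}}^\top = \bQ_{\nrho} + (\bQ_n - \bQ_{\nrho}),
\]
and note that both summands, being sums of rank-one matrices $\by\by^\top$, are positive semidefinite. Hence $\bv^\top \bQ_n \bv \geq \bv^\top \bQ_{\nrho} \bv$ for every $\bv \in \RR^{p+1}$, so positive definiteness of $\bQ_{\nrho}$ forces that of $\bQ_n$; equivalently $\{\bQ_n \text{ is singular}\} \subseteq \{\bQ_{\nrho} \text{ is singular}\}$, which gives
\[
\PP(\bQ_n \text{ is singular}) \leq \PP(\bQ_{\nrho} \text{ is singular}).
\]

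Next I would identify $\bQ_{\nrho}$ as the $\bQ$-matrix of a homogeneous process. For $n$ large enough $\tau = \nrho$, so the vectors $\bX_0, \dots, \bX_{\nrho - 1}$ entering $\bQ_{\nrho}$ are all produced by the before-change dynamics; thus $(X_k)_{-p+1 \leq k \leq \nrho}$ is distributed as a homogeneous INAR($p$) process with the before-change parameters, and $\bQ_{\nrho}$ is exactly its invertibility matrix based on $\nrho$ observations. By hypothesis these parameters satisfy the assumptions of Lemma \ref{lem:qinvert}, and, as observed in the remark following that lemma, the proof imposes no restriction on the initial distribution, so the lemma applies verbatim here.

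Finally, since $\rho > 0$ we have $\nrho \to \infty$ as $n \to \infty$, and Lemma \ref{lem:qinvert} with sample size $\nrho$ yields $\PP(\bQ_{\nrho} \text{ is singular}) \to 0$; combining this with the displayed bound completes the proof. I do not expect a genuine obstacle here: the only step requiring care is the positive-semidefinite monotonicity $\bv^\top \bQ_n \bv \geq \bv^\top \bQ_{\nrho} \bv$, which is precisely what lets us throw away the entire post-change portion of the sample and collapse the problem onto the already-established homogeneous result.
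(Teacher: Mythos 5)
Your proof is correct and follows essentially the same route as the paper: both arguments reduce the claim to Lemma \ref{lem:qinvert} applied to the pre-change segment via the event inclusion $\{\bQ_n \text{ is singular}\} \subseteq \{\bQ_{\nrho} \text{ is singular}\}$, which rests on exactly the positive-semidefiniteness observation you spell out. Your write-up is merely more explicit than the paper's (which cites ``the reasoning at the beginning of the proof of Lemma \ref{lem:qinvert}'' for the inclusion), so there is nothing to change.
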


\begin{proof}
To show this statement we only need to note that due to Lemma \ref{lem:qinvert} we have
\[
\PP(\bQ_{\nrho} \text{is singular}) \to 0,
\]
and clearly
\[
\{\bQ_n \text{is singular}\} \subseteq \{\bQ_{\nrho} \text{is singular}\}
\]
due to the reasoning at the beginning of the proof of Lemma \ref{lem:qinvert}.
\end{proof}

\begin{remark}
The conditions of this corollary could be greatly relaxed. Actually, it can be shown to be true for any change point sequence $\tau_n$, but a proper formulation is tedious and not a goal of this paper.
\end{remark}

\begin{Cor}
Under the conditions of Theorem \ref{HA_mu_1} both \ $\bC'$ \ and \ $\bC''$ \ are positive definite.
\end{Cor}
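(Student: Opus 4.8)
The plan is to exploit the fact that $\bC'$ and $\bC''$ are Gram (second-moment) matrices and to reduce their positive definiteness to the full-span statement already established in Lemma \ref{lem:qinvert}. Since $\bC' = \EE\left(\mx{\tbX'}\mx{\tbX'}^\top\right)$, for every $\bv \in \RR^{p+1}$ we have
\[
\bv^\top \bC' \bv = \EE\left(\left(\bv^\top \mx{\tbX'}\right)^2\right) \geq 0,
\]
so $\bC'$ is automatically positive semidefinite and it suffices to rule out singularity. I note that the sixth-moment assumptions of Theorem \ref{HA_mu_1}, together with the remark after Lemma \ref{geom}, guarantee that the second moment of the stationary distribution is finite, so $\bC'$ is well defined.

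First I would suppose, for contradiction, that $\bC'$ is singular. Then there is a nonzero $\bv$ with $\bv^\top \bC' \bv = 0$, and since the integrand above is a nonnegative random variable of vanishing expectation, this forces
\[
\bv^\top \mx{\tbX'} = 0 \quad \text{almost surely;}
\]
in other words, the stationary vector $\mx{\tbX'}$ is supported on the proper subspace $\{\by \in \RR^{p+1} : \bv^\top \by = 0\}$.

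Next I would transfer this to the whole trajectory. Consider the homogeneous INAR($p$) process governing the segment before the change, started from its own stationary distribution $\tbX'$; this is permissible because Lemma \ref{lem:qinvert}, as emphasised in the discussion following it, allows an arbitrary initial law. By stationarity each $\bX_{i-1}$ is distributed as $\tbX'$, so $\bv^\top \mx{\bX_{i-1}} = 0$ almost surely for every fixed $i$, and taking the countable intersection over $i \in \NN$ I conclude that, almost surely, every vector $\mx{\bX_{i-1}}$ lies in $\{\by : \bv^\top \by = 0\}$. Hence
\[
\PP\left(\text{span}\left\{\mx{\bX_{i-1}} : i \in \NN\right\} < \RR^{p+1}\right) = 1,
\]
which contradicts \eqref{eq:span}. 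The hypotheses of Theorem \ref{HA_mu_1}---stability $\alpha_1 + \cdots + \alpha_p < 1$, $\mu > 0$, and $\alpha_1 + \cdots + \alpha_p > 0$ or $\sigma^2 > 0$---are exactly those needed to invoke Lemma \ref{lem:qinvert} for the pre-change chain, since when $\sigma^2 = 0$ stability forces every positive $\alpha_q$ into $(0,1)$. The identical argument applied to the post-change chain started from $\tbX''$ then yields that $\bC''$ is positive definite.

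The one step I expect to be genuinely delicate is the reduction itself. It would be tempting to argue directly from $n^{-1}\bQ_n \to \bC'$ combined with the eventual nonsingularity of $\bQ_n$, but nonsingularity of each $\bQ_n$ does not pass to the normalised limit, as a sequence of invertible matrices may converge to a singular one. The correct route, as above, is to convert singularity of $\bC'$ into a single linear functional annihilating the support of the stationary vector and then to appeal to the full-span conclusion of Lemma \ref{lem:qinvert} for the chain started in stationarity.
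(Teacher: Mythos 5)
Your proof is correct and follows essentially the same route as the paper's: reduce positive definiteness to nonsingularity of a Gram matrix, convert singularity into the stationary law being concentrated on a proper subspace, start the chain in stationarity (permissible since Lemma \ref{lem:qinvert} places no restriction on the initial distribution), and contradict the full-span conclusion \eqref{eq:span}/\eqref{eq:impossubsets}. Your added check that the hypotheses of Theorem \ref{HA_mu_1} imply those of Lemma \ref{lem:qinvert} (stability forces any positive $\alpha_q$ into $(0,1)$ when $\sigma^2=0$), and your warning that one cannot argue via nonsingularity of $\bQ_n$ passing to the limit, are sound refinements but do not change the argument.
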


\begin{proof}
First we prove for $\bC'$. We note that Lemma \ref{lem:qinvert} did not impose any conditions on the initial distribution of the process $\bY$, therefore we can start the process from its stationary distribution (the existence of which is a trivial corollary of the existence of such a distribution for $\bX$ before the change). Now, the singularity of $\bC'$ is equivalent to the condition 
\[
\PP\left(\mx{\tX'} \in S\right) < 1, \quad \forall S < \RR^{p+1}.
\]
Let us now suppose that the stationary distribution of $\bY$ is concentrated on a proper subspace $S < \RR^{p+1}$. From \eqref{eq:impossubsets}, however, we conclude that the probability of the process remaining in $S$ forever is zero. As the distribution of $\bY_n$ is the stationary distribution for every time $n$, this is an immediate contradiction. Therefore $\bC'$ is nonsingular, but since it is a covariance matrix, it is positive semidefinite, therefore it has to be positive definite. The proof is the same for $\bC''$.
\end{proof}

\subsection{The conditional moments of $M_k$}
\label{cmoments}
We shall now derive several moments of $M_k$ conditionally on $\cF_{k-1}$ (this calculation is a reproduction of that in \citeA{TSz_OTDK}). Let us write $M_k$ in the form
\[
M_k = \sum_{j=1}^{X_{k-1}}(\xi_{1,k,j} - \alpha_1) + \sum_{j=1}^{X_{k-2}}(\xi_{2,k,j} - \alpha_2) + \ldots + \sum_{j=1}^{X_{k-p}}(\xi_{p,k,j} - \alpha_p) + (\vare_k - \mu). 
\]
All the terms on the right hand side have zero mean and are independent of each other conditionally on $\cF_{k-1}$, therefore
\[
\EE(M^2_k|\cF_{k-1}) = \alpha_1(1-\alpha_1)X_{k-1} + \ldots + \alpha_p(1-\alpha_p) X_{k-p} + \sigma^2.
\]
Similarly,
\begin{align*}
\EE(M^4_k|\cF_{k-1}) & = \sum_{i=1}^p \EE((\xi_{i,1,1} - \alpha_i)^4) X_{k-i} + 3 \sum_{i,j=1, i \not = j}^{p}\EE((\xi_{i,1,1} - \alpha_i)^2(\xi_{j,1,1} - \alpha_j)^2)X_iX_j \\
 & \quad + 6 \sum_{i=1}^p \binom{X_i}{2} \EE^2((\xi_{i,1,1} - \alpha_i)^2) + 6 \sum_{i=1}^p X_{k-i} \EE((\xi_{i,1,1} - \alpha_i)^2(\vare_1 - \mu)^2) + \EE((\vare_1 - \mu)^4) \\
 & = \balpha^\top_4 X_{k-i} + 3 \sum_{i,j=1, i \not = j}^{p}\alpha_i(1-\alpha_i)\alpha_j(1-\alpha_j)X_iX_j \\
 & \quad + 6 \sum_{i=1}^p \binom{X_i}{2} \alpha_i^2(1-\alpha_i)^2 + 6 \balpha^\top_2\bX_{k-1} \sigma^2 + \EE((\vare_1 - \mu)^4).
\end{align*}

\section*{Acknowledgements}

The authors wish to thank Edit Gombay at the University of Alberta and Lajos Horv\'ath at the University of Utah for drawing their attention to this research topic and directing them towards some very useful literature.

We are deeply grateful to several anonymous reviewers for a thorough analysis of the manuscript and insightful comments which greatly improved the paper.

The research was partially supported by the Hungarian Scientific Research Fund under Grant No. OTKA T-079128, the Hungarian--Chinese Intergovernmental S \& T Cooperation Programme for 2011-2013 under Grant No. 10-1-2011-0079 and the T\'AMOP-4.2.2/B-10/1-2010-0012 project.

\bibliographystyle{apacite}
\bibliography{change_in_inar(p)_03}

\end{document}